\documentclass[11pt,a4paper]{article}
\usepackage{aligned-overset}
\usepackage{amsmath,amsthm}
\usepackage{authblk}
\usepackage[backend=bibtex,maxbibnames=100,bibencoding=utf8,giveninits=true]{biblatex}
\usepackage{cancel}
\usepackage[hmargin={26mm,26mm},vmargin={30mm,35mm}]{geometry}
\usepackage{graphicx}
\usepackage{hyperref}
\usepackage{newtxtext}
\usepackage{newtxmath}
\usepackage[bb=stix]{mathalpha}
\usepackage{tikz-cd}
\usepackage{xcolor}
\usepackage{comment}
\usepackage{float}

\bibliography{pat}

\DeclareGraphicsExtensions{.pdf,.png,.jpg,.jpeg}
\graphicspath{{figures/}}

\newcommand{\email}[1]{\href{mailto:#1}{#1}}

\theoremstyle{plain}
\newtheorem{theorem}{Theorem}
\newtheorem{proposition}[theorem]{Proposition}
\newtheorem{lemma}[theorem]{Lemma}
\newtheorem{corollary}[theorem]{Corollary}
\newtheorem{assumption}[theorem]{Assumption}

\theoremstyle{remark}
\newtheorem{remark}[theorem]{Remark}

\numberwithin{theorem}{section}

\newcommand{\Real}{\mathbb{R}}
\newcommand{\Natural}{\mathbb{N}}

\newcommand{\dofmap}{\sigma}

\newcommand{\virtual}[1]{\mathrm{#1}}
\newcommand{\discrete}[1]{\mathbb{#1}}

\newcommand{\norm}[2]{\|#2\|_{#1}}
\newcommand{\seminorm}[2]{|#2|_{#1}}

\newcommand{\tnorm}[2]{|\!|\!| #2|\!|\!|_{#1}}

\newcommand{\Th}{\mathcal{T}_h}
\newcommand{\Sh}{\mathcal{S}_h}
\newcommand{\ST}{\mathcal{S}_T}

\newcommand{\Eh}{\mathcal{E}_h}
\newcommand{\Vh}{\mathcal{V}_h}

\newcommand{\ET}{\mathcal{E}_T}

\newcommand{\VT}{\mathcal{V}_T}
\newcommand{\VE}{\mathcal{V}_E}
\newcommand{\skel}{\Gamma_h}

\newcommand{\Poly}[1]{\mathcal{P}^{#1}}

\newcommand{\lproj}[2]{\pi_{#1}^{#2}}

\newcommand{\RTL}{R_T^L}
\newcommand{\RTa}{R_T^a}

\newcommand{\trrec}[1]{\gamma_{#1}}

\begin{document}

\title{Key challenges and bridges among convergence analysis techniques for polytopal methods}
\author[1]{Louren\c{c}o Beir\~{a}o da Veiga}
\author[2]{Daniele A. Di Pietro}
\author[2,3]{J\'er\^ome Droniou}
\affil[1]{%
  Dipartimento di Matematica e Applicazioni, Università degli Studi di Milano-Bicocca,
  Piazza dell’Ateneo Nuovo 1, 20126 Milano, Italy,
  \email{lourenco.beirao@unimib.it}
}
\affil[2]{%
  IMAG, Universit\'e de Montpellier, CNRS, Montpellier 34090, France\\
  \email{daniele.di-pietro@umontpellier.fr},
  \email{jerome.droniou@cnrs.fr}
}
\affil[3]{%
  School of Mathematics, Monash University, Melbourne, Australia
}
\maketitle

\begin{abstract}
  Polytopal methods provide a flexible framework for the numerical approximation of partial differential equations on general meshes.
  Their convergence analysis raises specific challenges due to their inherently non-conforming nature and, in many cases, the fully discrete nature of their solution.
  Two main techniques are considered: the virtual-function approach, used, e.g., in the context of Virtual Element Methods, and the fully discrete approach, which underlies, e.g., the Discrete de Rham method.
  We introduce here a novel framework based on the notion of conforming liftings, namely bounded and consistent mappings from the discrete space into the continuous space.
  This approach bridges the virtual and fully discrete viewpoints, clarifies the role of norm equivalence for virtual functions, and leads to a decomposition of the consistency error usable for polytopal methods.
  The three approaches are demonstrated on a model problem, which provides the opportunity to discuss relevant technical points.
  Bridges with the convergence properties of discrete differential complexes are also built.
  \smallskip\\
  \textbf{MSC2020:} 
  65N12, 
  65N15, 
  65N30, 
  65N08  
  \smallskip\\
  \textbf{Key words:} polytopal methods, convergence analysis, virtual element methods, discrete de Rham methods, discrete differential complexes
\end{abstract}



\section{Introduction}

Polytopal methods emerged in the late 2000s as a new technology for the solution of partial differential equations (PDEs) on more general meshes than those supported by classical finite elements.
In this work we discuss analysis techniques for polytopal methods with the twofold goal of 1) building bridges among them and with the literature on classical finite elements and 2) highlighting specific difficulties.
\smallskip

A key difference between polytopal and finite element methods lies in the fact that the former are inherently non-conforming, either because projections are used in the definition of discrete forms, as in Virtual Element methods (VEM) \cite{Beirao-da-Veiga.Brezzi.ea:13,Beirao-da-Veiga.Brezzi.ea:14,Beirao-da-Veiga.Brezzi.ea:23}, or because both spaces and forms are fully discrete, as in Discrete de Rham (DDR) methods \cite{Di-Pietro.Droniou.ea:20,Di-Pietro.Droniou:23,Bonaldi.Di-Pietro.ea:25}.
This difference is reflected in specific difficulties in their analysis.

The classical convergence analysis of finite element methods  for linear PDEs involves estimating the error $u - u_h$ between the exact solution $u$ and its numerical approximation $u_h$.
Classical approaches for fully conforming methods, such as Céa's Lemma \cite{Cea:64}, estimate this error in one stroke.
This approach is less natural in a non-conforming setting, where the proof usually starts by distinguishing, through a triangle inequality, the approximation and consistency contributions; see the classical paper \cite{Strang:72} and also \cite{Ern.Guermond:21*1} for more recent approaches.
Several error estimates can then be derived depending on how the consistency error is further decomposed.

In the context of polytopal methods, however, the difference $u - u_h$ may not make sense.
This is the case for methods in fully discrete formulation such as, e.g.,
DDR, but also
Mimetic Finite Difference \cite{Kuznetsov.Lipnikov.ea:04,Brezzi.Lipnikov.ea:05,Beirao-da-Veiga.Lipnikov.ea:11,Beirao-da-Veiga.Lipnikov.ea:14},
Finite Volume \cite{Droniou.Eymard:06,Droniou.Eymard.ea:10},
and Hybrid High-Order \cite{Di-Pietro.Ern.ea:14,Di-Pietro.Ern:15,Di-Pietro.Droniou:20} methods.
In this situation, the main focus is on the consistency error component $u_h - I_h u$, with $I_h u$ denoting a suitable approximation (obtained, e.g., via interpolation) of $u$ in the discrete space.
In \cite{Di-Pietro.Droniou:18}, a framework for a fully discrete error analysis encompassing estimates of $u_h - I_h u$ in both the natural stability norm and in weaker norms is proposed.
A major difference with respect to classical finite element estimates is that no attempt is made to further decompose the consistency error, as the latter can often be estimated by direct manipulations.
An important exception is encountered when considering the DDR discretization of problems involving curl-curl terms; see \cite[Section 6.5]{Di-Pietro.Droniou:23}.

Even when the difference $u - u_h$ makes sense, as in VEM (where $u_h$ denotes the virtual function corresponding to the solution to the scheme), additional difficulties appear with respect to finite elements.
In \cite{Beirao-da-Veiga.Brezzi.ea:13} it was shown that, by a simple argument, the error $u_h - I_h u$ (here evaluated in a continuous norm) can be bounded by a polynomial approximation error plus an interpolation error in the virtual space.
This abstract approach assumes that the virtual discretization satisfies certain key properties, whose proofs require the development of new tools beyond those available in the finite element literature (for instance, due to the fact that VEM does not rely on a reference element).
We mention, in particular, the requirements on stabilization bilinear forms and those concerning the approximation properties for virtual spaces; see, e.g., \cite{Beirao-da-Veiga.Lovadina.ea:17, Brenner.Sung:18,Chen.Huang:18,Beirao-da-Veiga.Mascotto.ea:22}.
We will show here that these points are actually closely related through the equivalence of the norm of a virtual function and that of its degrees of freedom (DOFs).
In passing, defining the error as $u - u_h$ involves comparing $u$ to the virtual function $u_h$, which is not the output of the actual computation.
\smallskip

In this paper, we discuss the two main analysis approaches for polytopal methods (based on virtual functions and fully discrete) and bridge them through a new one based on the notion of \emph{conforming lifting}.
The latter is a bounded and consistent mapping of the discrete space into the continuous space which, unlike virtual functions, is not necessarily implicitly defined as the solution to a local PDE problem, and may be fully computable.
This new framework explicitly shows the role of the above-mentioned norm equivalence for virtual functions and leads to a decomposition of the consistency error, in the spirit of finite elements estimates, that is usable for polytopal methods; see \eqref{eq:discrete:Eh:decomposition}.
\smallskip

The idea of conforming lifting is not entirely new, and has actually already been used in the context of non-conforming finite element methods, e.g., to analyze multigrid solvers \cite{Brenner:99}, or for the convergence analysis for fourth-order linear or nonlinear PDEs such as Kirchoff--Love plates equations \cite{Brenner.Sung.ea:13} and von Karm\`an equations \cite{Mallik.Nataraj:16}.
Conforming reconstructions also appear in the a posteriori error analysis of both conforming and non-conforming finite element methods; see, e.g., \cite{Achdou.Bernardi.ea:03,Karakashian.Pascal:03,Ern.Vohralik:15,Carstensen.Puttkammer:20,Ern.Vohralik:20} as well as \cite[Chapter 7]{Vohralik:24}.
In the context of polytopal schemes, conforming liftings have been used as a critical tool in the convergence analysis of Mimetic Finite Difference methods, see for instance \cite{Brezzi.Buffa.ea:09,Beirao-da-Veiga.Lipnikov.ea:11,Beirao-da-Veiga.Lipnikov.ea:14}.
They have also been used to prove Korn inequalities for polytopal schemes in linear elasticity (see \cite[Lemma 1]{Botti.Di-Pietro.ea:19} and also \cite[Section 7.2.3]{Di-Pietro.Droniou:20} and \cite{Droniou.Haidar.ea:25}).
In all the cases above, the liftings are conforming in the continuous space and typically constructed by making use of a sub-triangulation in such a way to remain close to the lifted function.
By contrast, the lifting used in \cite{Di-Pietro.Droniou:22} for a DDR discretization of Reissner--Mindlin plates problem additionally needs to fulfill fine projection properties, similar to \eqref{eq:Lh:consistency:a} below, to ensure robustness.

More recently \cite{Di-Pietro.Droniou.ea:25}, a consistency and convergence analysis based on conforming liftings has made it possible to generalize the results of \cite{Di-Pietro.Droniou:23} to the DDR complex of differential forms \cite{Bonaldi.Di-Pietro.ea:25}.
These encouraging results have served as a motivation for the present work, which aims to extend them to more general situations.
\smallskip

The rest of this work is organized as follows.
In Section \ref{sec:framework} we first revisit the error analysis based on virtual functions and continuous norms, the natural one for VEM, and then present a new framework based on the notion of conforming lifting.
The role of the norm equivalence for virtual functions is discussed.
In Section \ref{sec:application} we apply these analysis approaches to a nodal scheme for the Poisson equation, and additionally show that the consistency error can be estimated by direct manipulations, as often done for DDR schemes.
Appendix \ref{app:scaled.trace} details a trace lemma used in the VEM analysis and, finally, in Appendix \ref{sec:adjoint.consistency}, we bridge the notion of consistency error to that of adjoint consistency appearing in the analysis of DDR complexes.
We specifically identify this notion with that of consistency for the co-derivative operator, and thus establish a clear link with the inspiring work \cite{Di-Pietro.Droniou.ea:25} and also with recent work on Finite Element Systems \cite{Christiansen.Rapetti:25}.


\section{An abstract framework}\label{sec:framework}

\subsection{Setting}\label{sec:abstract.framework:setting}

Let $V$ be a Hilbert space with inner product-induced norm $\norm{V}{\cdot}$.
Let $a : V \times V \to \Real$ be a continuous bilinear form and $\ell : V \to \Real$ be a continuous linear form.
We aim to approximate the problem:
Find $u \in V$ such that
\begin{equation}\label{eq:weak}
  a(u,v) = \ell(v)
  \qquad \forall v \in V .
\end{equation}
This problem is henceforth assumed to be well-posed.
Denote by $V_h$ a vector space with inner product-induced norm $\norm{V_h}{\cdot}$.
Here, the subscript $h$ refers to a parameter (typically, the mesh size) that we let tend to zero.
Let $a_h : V_h\times V_h \to \Real$ be a bilinear form and $\ell_h : V_h \to \Real$ a linear form.
We consider the approximation of \eqref{eq:weak} corresponding to the discrete problem:
Find $u_h \in V_h$ such that
\begin{equation}\label{eq:abstract:discrete}
  a_h(u_h,v_h) = \ell_h(v_h)
  \qquad \forall v_h \in V_h.
\end{equation}

In general, the difference $u - u_h$ may not make sense as, e.g., the space $V_h$ could be spanned by objects of entirely different nature with respect to $V$.
For instance, in the fully discrete method considered in Section \ref{sec:abstract.framework:fully.discrete}, the present framework is applied with $V_h$ spanned by vectors of polynomials.
In order to define an approximation error, we therefore introduce an interpolator $I_h : V_I \to V_h$ defined on a subspace $V_I \subset V$ possibly featuring additional regularity, and define the \emph{approximation error} as $u_h - I_h u$.
Notice that, from this point on, we will tacitly assume that $u \in V_I$, so that $I_h u$ makes sense.

\begin{remark}[Interpolator]\label{rem:gen-int}
  At this stage, the notion of interpolator is quite general, and $I_h u$ can be understood as any  approximation of $u$ in $V_h$.
  In the following sections, both in order to simplify the presentation and draw a clearer bridge among the continuous and discrete approaches, we will identify $I_h u$ with the classical interpolator defined from DOF values; see \eqref{eq:virtual:Ih}.
  The presented analysis can be generalized to different choices of $I_h u$, for instance inspired by Cl\'ement, Scott--Zhang or Oswald approximation procedures, or by bounded cochain projections encountered in finite element complexes (see, e.g., \cite{Arnold.Falk.ea:06,Ern.Guermond:16}).
\end{remark}

Next, following \cite{Di-Pietro.Droniou:18}, we define the \emph{consistency error} as the linear form $\mathcal{E}_h : V_h \to \Real$ such that
\begin{equation}\label{eq:consistency.error}
  \mathcal{E}_h(v_h)
  \coloneqq \ell_h(v_h) - a_h(I_h u, v_h)
  \qquad \forall v_h \in V_h.
\end{equation}
Similar definitions of the consistency error are also used for the analysis of non-conforming finite element methods; see, e.g., \cite[Eq. (27.3)]{Ern.Guermond:21*1}.

\begin{remark}[Consistency error]\label{rem:consistency.error}
  When \eqref{eq:weak} corresponds to the weak formulation of a PDE problem, obtained using integration by parts, the consistency error measures the extent to which the corresponding integration by parts formula fails to hold at the discrete level.
  Consider, e.g., the Poisson problem with homogeneous Dirichlet boundary conditions and forcing term $f \in L^2(\Omega)$, corresponding to the following choices:
  \begin{equation}\label{eq:poisson}
    V = H_0^1(\Omega),\qquad
    a(w,v) = \int_\Omega \nabla u \cdot \nabla v,\qquad
    \ell(v) = \int_\Omega f v.
  \end{equation}
  Accounting for the fact that $f = - \Delta u$ almost everywhere in $\Omega$, it is clear that the consistency error $\mathcal{E}_h(v_h)$ measures by how much the integration by parts
  \[
  - \int_\Omega \Delta u \, v + \int_\Omega \nabla u \cdot \nabla v = 0
  \]
  fails to hold when the discrete space and forms replace the continuous ones.
\end{remark}

The following result shows that, if the discrete problem \eqref{eq:abstract:discrete} is well-posed, the approximation error is bounded by the consistency error.
Upon assuming uniform boundedness for $a_h$, this estimate is quasi-optimal \cite[Remark A.8]{Di-Pietro.Droniou:20}.

\begin{theorem}[Estimate of the approximation error]\label{thm:error.estimate}
  Assume that there exists $\alpha > 0$ such that
  \[
  \alpha \norm{V_h}{w_h}
  \le \sup_{v_h \in V_h \setminus \{0\}} \frac{a_h(w_h, v_h)}{\norm{V_h}{v_h}} \qquad \forall w_h\in V_h.
  \]
  Then, it holds
  \[
  \norm{V_h}{u_h - I_h u}
  \le \alpha^{-1} \norm{V_h'}{\mathcal{E}_h}.
  \]
\end{theorem}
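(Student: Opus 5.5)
The plan is to apply the inf-sup hypothesis with $w_h \coloneqq u_h - I_h u \in V_h$, and then use the definition \eqref{eq:consistency.error} of the consistency error to identify the numerator. Since $I_h u \in V_h$ (we tacitly assume $u \in V_I$) and $u_h \in V_h$, the difference $w_h$ is an admissible element of $V_h$. The assumption therefore gives
\[
\alpha \norm{V_h}{u_h - I_h u}
\le \sup_{v_h \in V_h \setminus \{0\}} \frac{a_h(u_h - I_h u, v_h)}{\norm{V_h}{v_h}}.
\]

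Next, I rewrite the numerator by bilinearity of $a_h$ and the discrete problem \eqref{eq:abstract:discrete}. For every $v_h \in V_h$,
\[
a_h(u_h - I_h u, v_h) = a_h(u_h, v_h) - a_h(I_h u, v_h) = \ell_h(v_h) - a_h(I_h u, v_h) = \mathcal{E}_h(v_h).
\]
The supremum is then exactly the dual norm $\norm{V_h'}{\mathcal{E}_h}$, defined as $\sup_{v_h \ne 0} \mathcal{E}_h(v_h)/\norm{V_h}{v_h}$. This quantity is finite: $\mathcal{E}_h$ is linear, and either $V_h$ is finite-dimensional, as in the applications, or the bound is simply read as a possibly infinite one. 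Dividing by $\alpha > 0$ concludes.

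There is no real obstacle here. The only points to check are that $u_h$ exists, which is implicit in the statement since $u_h$ is the solution of \eqref{eq:abstract:discrete}, and that the supremum is read with the convention defining $\norm{V_h'}{\cdot}$.
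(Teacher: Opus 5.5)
Your proof is correct and is the standard argument behind the result the paper cites without reproducing it (\cite[Theorem 10]{Di-Pietro.Droniou:18}). You apply the inf-sup condition to $u_h - I_h u$, then use bilinearity of $a_h$ and the discrete problem to identify $a_h(u_h - I_h u,\cdot)$ with $\mathcal{E}_h$, which turns the supremum into $\norm{V_h'}{\mathcal{E}_h}$.
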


\begin{proof}
  See \cite[Theorem 10]{Di-Pietro.Droniou:18}.
\end{proof}

\begin{corollary}[Estimate of the approximation error for conforming discrete spaces]\label{cor:conforming.case}
  Under the assumptions of Theorem \ref{thm:error.estimate}, and further assuming $V_h \subset V$ and $\norm{V_h}{\cdot}=\norm{V}{\cdot}$, it holds
  \begin{equation}\label{eq:conforming.case:error.estimate}
    \norm{V}{u - u_h}
    \le \norm{V}{u - I_h u} + \norm{V}{u_h - I_h u}
    \le \norm{V}{u - I_h u} + \alpha^{-1} \norm{V'}{\mathcal{E}_h}.
  \end{equation}
\end{corollary}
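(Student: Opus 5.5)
The plan is to combine the triangle inequality in $V$ with Theorem \ref{thm:error.estimate}, using the conformity assumptions to read discrete quantities as continuous ones.

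First, since $V_h \subset V$, both $u_h$ and $I_h u$ lie in $V$. Hence $u - u_h$ is a well-defined element of $V$. Inserting $\pm I_h u$ and applying the triangle inequality for $\norm{V}{\cdot}$ gives the first inequality in \eqref{eq:conforming.case:error.estimate}:
\[
\norm{V}{u - u_h} \le \norm{V}{u - I_h u} + \norm{V}{I_h u - u_h}.
\]

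Second, since $u_h - I_h u \in V_h$ and $\norm{V_h}{\cdot} = \norm{V}{\cdot}$ on $V_h$, we have $\norm{V}{u_h - I_h u} = \norm{V_h}{u_h - I_h u}$. Theorem \ref{thm:error.estimate} then bounds this quantity by $\alpha^{-1}\norm{V_h'}{\mathcal{E}_h}$.

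The only point needing care is how to read $\norm{V'}{\mathcal{E}_h}$, since $\mathcal{E}_h$ is defined only on $V_h$. I would interpret it as the dual norm computed with the $V$-norm on $V_h$, that is,
\[
\norm{V'}{\mathcal{E}_h} \coloneqq \sup_{v_h \in V_h\setminus\{0\}} \frac{\mathcal{E}_h(v_h)}{\norm{V}{v_h}}.
\]
Because the two norms coincide on $V_h$, this equals $\norm{V_h'}{\mathcal{E}_h}$. If $\ell_h$ and $a_h$ happen to extend to $V$, this is at most the dual norm of any extension of $\mathcal{E}_h$ to $V$, so the stated bound still holds. Combining the two steps gives the second inequality in \eqref{eq:conforming.case:error.estimate}.

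There is no real obstacle here. The only subtlety is the notational identification of the two dual norms, which I would make explicit as above.
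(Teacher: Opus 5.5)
Your proof is correct and is the argument the paper intends: the corollary is stated without a separate proof, as an immediate consequence of the triangle inequality and Theorem \ref{thm:error.estimate} once $\norm{V_h}{\cdot}=\norm{V}{\cdot}$ is used. Your reading of $\norm{V'}{\mathcal{E}_h}$ as the supremum over $V_h\setminus\{0\}$ measured in the $V$-norm also matches how the paper uses this quantity in the proof of Theorem \ref{thm:vem:error.estimate}, where the supremum is taken over $\virtual{V}_h\setminus\{\virtual{0}\}$.
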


\subsection{Convergence analysis based on virtual functions}\label{sec:abstract.framework:vem}

We consider in this section a VEM approximation of problem \eqref{eq:weak}.
To simplify the identification of virtual objects (spaces, interpolators, functions, bilinear forms), we denote them with upright Roman fonts: $\virtual{V}_h$, $\virtual{I}_h$, $\virtual{v}_h$, $\virtual{a}_h$, etc.

We assume, from this point on, that there exists a Hilbert space $L$ with inner product $(\cdot,\cdot)_L$ and induced norm $\norm{L}{\cdot}$ such that $V$ injects continuously into $L$, and that there exists $f \in L$ such that
\begin{equation}\label{eq:rel.ell.L}
  \ell(v) = (f, v)_L \qquad \forall v \in L.
\end{equation}
We additionally assume that our continuous and discrete spaces and operators can be restricted as local spaces and operators to each element $T$ of a finite set $\Th$, and that these local objects fully reconstruct the global ones.
Typically, for a PDE problem set on a domain $\Omega$, $\Th$ will be a partition of $\Omega$ into elements $T$ (and $h$ will be the maximum diameter of these elements).
Restrictions to $T \in \Th$ of discrete objects are denoted replacing the subscript $h$ with $T$.

\subsubsection{Discrete problem}

Let $\virtual{V}_h \subset V_I$ be a VEM space with DOFs $\dofmap_h : V_I \to \Real^{\dim(V_h)}$.
As usual in finite and virtual elements, the interpolator $\virtual{I}_h : V_I \to \virtual{V}_h$ is defined by the following condition:
\begin{equation}\label{eq:virtual:Ih}
  \dofmap_h \virtual{I}_h v = \dofmap_h v \qquad \forall v \in V_I.
\end{equation}
The fact that $\virtual{I}_h$ is uniquely defined by \eqref{eq:virtual:Ih} is a consequence of the unisolvence of the DOFs.

We consider a bilinear form $\virtual{a}_h : \virtual{V}_h \times \virtual{V}_h \to \Real$ such that, for all $(\virtual{w}_h, \virtual{v}_h) \in \virtual{V}_h \times \virtual{V}_h$,
\[
\virtual{a}_h(\virtual{w}_h, \virtual{v}_h)
\coloneqq \sum_{T \in \Th} \virtual{a}_T(\virtual{w}_T, \virtual{v}_T),
\]
where, for all $T\in\Th$, $\virtual{a}_T : \virtual{V}_T \times \virtual{V}_T \to \Real$ is a local bilinear form satisfying the following consistency property:
For some finite-dimensional (typically polynomial) local space $\mathcal{P}_T \subset \virtual{V}_T$,
\begin{equation}\label{eq:virtual:consistency}
  \virtual{a}_T(w, \virtual{v}_T)
  = a_{|T}(w, \virtual{v}_T)
  \qquad \forall (w, \virtual{v}_T) \in \mathcal{P}_T \times \virtual{V}_T.
\end{equation}

Let $\widetilde{\mathcal{P}}_T \subset \mathcal{P}_T$ and denote by $\Pi_T^L$ the $L_{|T}$-orthogonal projector onto $\widetilde{\mathcal{P}}_T$.
We discretize the right-hand side by means of the linear form $\ell_h : \virtual{V}_h \to \Real$ such that
\[
\ell_h(\virtual{v}_h) \coloneqq \sum_{T \in \Th} (f, \Pi_T^L \virtual{v}_T)_{L_{|T}}.
\]
 The VEM scheme based on the conforming space $\virtual{V}_h$ reads:
Find $\virtual{u}_h \in \virtual{V}_h$ such that
\begin{equation}\label{eq:virtual:discrete}
  \virtual{a}_h(\virtual{u}_h, \virtual{v}_h)
  = \ell_h(\virtual{v}_h) \qquad \forall \virtual{v}_h \in \virtual{V}_h.
\end{equation}

\begin{remark}[Conforming Galerkin setting]\label{rem:galerkin}
If $\widetilde{\mathcal P}_T=\mathcal P_T=\virtual{V}_T$ for all $T\in\Th$, then \eqref{eq:virtual:consistency} enforces $\virtual{a}_h=a$ on $\virtual{V}_h\times\virtual{V}_h$, and we have $\Pi_T^L=\mathrm{Id}$ on $\virtual{V}_T$ (for all $T\in\Th$) which gives $\ell_h=\ell$ on $\virtual{V}_h$. The scheme \eqref{eq:virtual:discrete} is then the conforming Galerkin scheme for \eqref{eq:weak} based on the space $\virtual{V}_h$. 
\end{remark}

\begin{remark}[Consistency and non-conformity]\label{rem:cons}
    Following up on Remark \ref{rem:galerkin}, in the generic VEM setting (i.e., on general meshes), the virtual space $\virtual{V}_T$ is strictly larger than $\mathcal P_T$, the consistency \eqref{eq:virtual:consistency} can only be imposed on $\mathcal P_T\times\virtual{V}_T\subsetneq \virtual{V}_T\times\virtual{V}_T$, and $\virtual{a}_h$ does not coincide with $a$ on $\virtual{V}_h\times\virtual{V}_h$. This is due to the requirement of defining $\virtual{a}_h$ in a way that is both stable and \emph{computable} from the DOFs. The same holds for $\ell_h$, which cannot, in general, be taken equal to $\ell$. Thus, despite the fact that $\virtual{V}_h \subset V$, the scheme \eqref{eq:virtual:discrete} is non-conforming.

  We finally observe that condition \eqref{eq:virtual:consistency} can be relaxed, for instance in presence of some variable coefficients in the equation, and required to hold up to some approximation term of $w$, see for instance \cite{Beirao-da-Veiga.Brezzi.ea:16,Cangiani.Manzini.ea:17}.
\end{remark}

\subsubsection{Error analysis}\label{sec:abstract.framework:vem.error.analysis}

From this point on, inequalities of the form $a \le C b$ with $C > 0$ independent of $a$, $b$, $h$ and $T\in\Th$ (for local quantities) are abbreviated as $a \lesssim b$, and we use $a \simeq b$ as a shortcut for ``$a \lesssim b$ and $b \lesssim a$''.
We will also require the following inf-sup condition and local boundedness property on the bilinear form $\virtual{a}_h$:
\begin{alignat}{2}
  \label{eq:virtual:inf-sup}
  \norm{V}{\virtual{w}_h}
  &\lesssim \sup_{\virtual{v}_h \in \virtual{V}_h \setminus \{ \virtual{0} \}} \frac{\virtual{a}_h(\virtual{w}_h,\virtual{v}_h)}{\norm{V}{\virtual{v}_h}}&&\qquad
  \forall \virtual{w}_h \in \virtual{V}_h,\\
  \label{eq:virtual:local-bound}
  \virtual{a}_T(\virtual{w}_T, \virtual{v}_T)
  &\lesssim \norm{V_{|T}}{\virtual{w}_T} \norm{V_{|T}}{\virtual{v}_T}&&\qquad\forall T\in\Th\,,\quad\forall (\virtual{w}_T, \virtual{v}_T) \in \virtual{V}_T \times \virtual{V}_T.
\end{alignat}

\begin{remark}[Norm on the virtual space $\virtual{V}_h$]
  Note that these inf-sup and boundedness properties of the virtual bilinear form are stated in terms of the norm on $V$, not on some other discrete (and computable) norm specifically defined on $\virtual{V}_h$; the latter option is linked to the discussion in Section \ref{sec:abstract.framework:fully.discrete}. The choice of endowing $\virtual{V}_h$ with $\norm{V}{\cdot}$ is natural since $\virtual{V}_h\subset V$, but it can make \eqref{eq:virtual:inf-sup}--\eqref{eq:virtual:local-bound} quite challenging to establish, see in particular Section \ref{sec:nodal.virtual.analysis} and \cite{Brenner.Guan.ea:17,Beirao-da-Veiga.Lovadina.ea:17,Beirao-da-Veiga.Mascotto.ea:22}.
\end{remark}

\begin{theorem}[Error estimate based on virtual functions]\label{thm:vem:error.estimate}
  Denote by $u$ the solution to \eqref{eq:weak} and assume \eqref{eq:virtual:consistency}, \eqref{eq:virtual:inf-sup}, and \eqref{eq:virtual:local-bound}.
  Then, it holds,
  \begin{equation}\label{eq:vem:error.estimate}
    \norm{V}{u - u_h}
    \lesssim \norm{V}{u - \virtual{I}_h u}
    + \left[
      \sum_{T \in \Th} \left(
      \inf_{w\in\mathcal P_T}\norm{V_{|T}}{u - w}^2
      + \norm{L_{|T}}{f - \Pi_T^L f}^2
      \right)
      \right]^{\frac12}.
  \end{equation}
\end{theorem}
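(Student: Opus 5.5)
We apply Corollary~\ref{cor:conforming.case} with $V_h=\virtual{V}_h\subset V$, endowed with $\norm{V}{\cdot}$, and $I_h=\virtual{I}_h$. The inf-sup condition \eqref{eq:virtual:inf-sup} supplies the hypothesis of Theorem~\ref{thm:error.estimate}. Hence
\[
\norm{V}{u-\virtual{u}_h}\lesssim \norm{V}{u-\virtual{I}_h u}+\sup_{\virtual{v}_h\in\virtual{V}_h\setminus\{\virtual 0\}}\frac{\mathcal{E}_h(\virtual{v}_h)}{\norm{V}{\virtual{v}_h}} ,
\]
and it remains to bound the consistency error $\mathcal{E}_h(\virtual{v}_h)=\ell_h(\virtual{v}_h)-\virtual{a}_h(\virtual{I}_h u,\virtual{v}_h)$.

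For the consistency error, use \eqref{eq:weak} tested with $\virtual{v}_h\in V$ and split $a$ elementwise, $a(u,\virtual{v}_h)=\sum_T a_{|T}(u,\virtual{v}_T)$. This gives
\[
\mathcal{E}_h(\virtual{v}_h)=\underbrace{\ell_h(\virtual{v}_h)-\ell(\virtual{v}_h)}_{\mathfrak T_1}+\underbrace{\sum_{T\in\Th}\bigl(a_{|T}(u,\virtual{v}_T)-\virtual{a}_T(\virtual{I}_h u,\virtual{v}_T)\bigr)}_{\mathfrak T_2}.
\]
For each $T$, fix an arbitrary $w_T\in\mathcal P_T$ and use \eqref{eq:virtual:consistency} to write $\virtual{a}_T(w_T,\virtual{v}_T)=a_{|T}(w_T,\virtual{v}_T)$. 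The summand of $\mathfrak T_2$ then becomes
\[
a_{|T}(u-w_T,\virtual{v}_T)+\virtual{a}_T(w_T-\virtual{I}_h u,\virtual{v}_T).
\]
The first piece is bounded by continuity of $a$ restricted to $T$. This step assumes, as is implicit in the local framework, that the local forms are uniformly bounded. The second piece is bounded by \eqref{eq:virtual:local-bound}, using $w_T-\virtual{I}_h u\in\virtual{V}_T$ since $\mathcal P_T\subset\virtual{V}_T$. A triangle inequality gives
\[
\norm{V_{|T}}{w_T-\virtual{I}_T u}\le\norm{V_{|T}}{u-w_T}+\norm{V_{|T}}{u-\virtual{I}_T u}.
\]
Apply the discrete Cauchy--Schwarz inequality, take the infimum over $w_T$, and use that the local norms recombine into global ones, $\sum_T\norm{V_{|T}}{\cdot}^2=\norm{V}{\cdot}^2$. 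The result is
\[
\mathfrak T_2\lesssim\Bigl(\norm{V}{u-\virtual{I}_h u}+\Bigl[\textstyle\sum_T\inf_{w\in\mathcal P_T}\norm{V_{|T}}{u-w}^2\Bigr]^{1/2}\Bigr)\norm{V}{\virtual{v}_h}.
\]

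For $\mathfrak T_1$, use \eqref{eq:rel.ell.L} and the self-adjointness of the $L_{|T}$-orthogonal projector $\Pi_T^L$:
\[
(f,\Pi_T^L\virtual{v}_T-\virtual{v}_T)_{L_{|T}}=(f-\Pi_T^L f,\Pi_T^L\virtual{v}_T-\virtual{v}_T)_{L_{|T}}=-(f-\Pi_T^L f,\virtual{v}_T)_{L_{|T}}.
\]
Bound this by $\norm{L_{|T}}{f-\Pi_T^L f}\,\norm{L_{|T}}{\virtual{v}_T}$. Summing with Cauchy--Schwarz and using the continuous injection $V\hookrightarrow L$, i.e.\ $\norm{L}{\virtual{v}_h}\lesssim\norm{V}{\virtual{v}_h}$, gives the $f$-term of \eqref{eq:vem:error.estimate}.

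The main point needing care is the manipulation of $\mathfrak T_2$. One has to insert the polynomial $w_T$ so that \eqref{eq:virtual:consistency} can be used, without ever evaluating $\virtual{a}_T$ on $u$ itself, where it is not defined. This is why the interpolation error reappears through \eqref{eq:virtual:local-bound}; it is absorbed into the first term of the estimate. Everything else is routine once the local continuity of $a_{|T}$ and the additivity of local norms are accepted as part of the localization assumptions.
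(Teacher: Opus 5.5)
Your proposal is correct and follows the paper's proof step by step. It invokes Corollary~\ref{cor:conforming.case} through the inf-sup condition and splits $\mathcal{E}_h$ into the same load and bilinear-form terms. It then inserts $w\in\mathcal P_T$ to use \eqref{eq:virtual:consistency} together with \eqref{eq:virtual:local-bound} and the continuity of $a_{|T}$, and applies the same $\Pi_T^L$-orthogonality plus $V\hookrightarrow L$ argument. Your only additions are making explicit the triangle inequality $\norm{V_{|T}}{w-\virtual{I}_T u}\le\norm{V_{|T}}{u-w}+\norm{V_{|T}}{u-\virtual{I}_T u}$, which the paper leaves implicit in reaching \eqref{eq:virtual:Eah:estimate.2}, and stating the localization assumptions.
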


\begin{proof}
  Since we have assumed \eqref{eq:virtual:inf-sup} and $\virtual{V}_h \subset V$, the estimate \eqref{eq:conforming.case:error.estimate} holds.
  It only remains to bound $\norm{V'}{\mathcal{E}_h}$.
  We start from the following decomposition: %
  For all $\virtual{v}_h \in \virtual{V}_h$,
  \begin{equation}\label{eq:virtual:Eh:decomposition}
    \mathcal{E}_h(\virtual{v}_h)
    \overset{\eqref{eq:weak}}= \underbrace{%
      \ell_h(\virtual{v}_h) - \ell(\virtual{v}_h)
    }_{\eqcolon \mathcal{E}_{\ell,h}}
    + \underbrace{%
      a(u, \virtual{v}_h) - \virtual{a}_h(\virtual{I}_h u, \virtual{v}_h).
    }_{\eqcolon \mathcal{E}_{a,h}}
  \end{equation}

  For all $T\in\Th$ and all $w\in\mathcal P_T$, we have
  \[
  \begin{aligned}
    a_{|T}(u, \virtual{v}_T) - \virtual{a}_T(\virtual{I}_T u, \virtual{v}_T)
    \overset{\eqref{eq:virtual:consistency}}&=
    a_{|T}(u - w, \virtual{v}_T)
    + \virtual{a}_T(w - \virtual{I}_T u, \virtual{v}_T)
    \\
    \overset{\eqref{eq:virtual:local-bound}}&\lesssim
    \left(
    \norm{V_{|T}}{u - w}^2
    + \norm{V_{|T}}{w - \virtual{I}_T u}^2
    \right)^{\frac12}
    \norm{V_{|T}}{\virtual{v}_T},
  \end{aligned}
  \]
  where we have additionally used the $V_{|T}$-boundedness of $a_{|T}$ in the second step.
  Taking the infimum over $w\in\mathcal P_T$ for each $T \in \Th$, summing the resulting inequality over the mesh elements, and using a Cauchy--Schwarz inequality, we infer
  \begin{equation}\label{eq:virtual:Eah:estimate.2}
    \mathcal{E}_{a,h} = \sum_{T \in \Th} \left[
      a_{|T}(u, \virtual{v}_T) - \virtual{a}_T(\virtual{I}_T u, \virtual{v}_T)
      \right]
    \lesssim
    \left(
    \norm{V}{u - \virtual{I}_h u}^2
    + \sum_{T\in\Th} \inf_{w\in\mathcal P_T}\norm{V_{|T}}{u - w}^2
    \right)^{\frac12}\norm{V}{\virtual{v}_h}.
  \end{equation}

  To bound $\mathcal{E}_{\ell,h}$, we simply use the fact that $\Pi_T^L - {\rm Id}$ is self-adjoint for the inner product of $L_{|T}$ to write
  \begin{equation}\label{eq:virtual:Elh:estimate}
    \begin{aligned}
      \mathcal{E}_{\ell,h}
      &= \sum_{T \in \Th} (f, \Pi_T^L \virtual{v}_T - \virtual{v}_T)_{L_{|T}}
      = \sum_{T \in \Th} (\Pi_T^L f - f, \virtual{v}_T)_{L_{|T}}
      \\
      &\le \left(
      \sum_{T \in \Th} \norm{L_{|T}}{f - \Pi_T^L f}^2
      \right)^{\frac12} \norm{L}{\virtual{v}_h}
      \lesssim \left(
      \sum_{T \in \Th} \norm{L_{|T}}{f - \Pi_T^L f}^2
      \right)^{\frac12} \norm{V}{\virtual{v}_h},
    \end{aligned}
  \end{equation}
  where the last bound is justified by the fact that the injection $V \hookrightarrow L$ is continuous.

  Plugging \eqref{eq:virtual:Eah:estimate.2} and \eqref{eq:virtual:Elh:estimate} into \eqref{eq:virtual:Eh:decomposition} written for $\virtual{v}_h \neq \virtual{0}$,
  dividing by $\norm{V}{\virtual{v}_h}$,
  and passing to the supremum over $\virtual{v}_h \in \virtual{V}_h \setminus \{ \virtual{0} \}$, we infer that
  \begin{equation}\label{eq:vem.estimate.Eh}
  \norm{V'}{\mathcal{E}_h}
  \lesssim \left[
    \norm{V}{u - \virtual{I}_h u}^2
    + \sum_{T \in \Th} \left(
    \inf_{w\in\mathcal P_T}\norm{V_{|T}}{u - w}^2
    + \norm{L_{|T}}{f - \Pi_T^L f}^2
    \right)
    \right]^{\frac12}.
  \end{equation}
  Plugging this estimate into \eqref{eq:conforming.case:error.estimate} and using the inequality $a^2 + b^2 \le (a + b)^2$ valid for all $a,b \in \Real^+$, the desired result follows.
\end{proof}

\subsection{Convergence analysis based on a conforming lifting}\label{sec:abstract.framework:fully.discrete}

Let us now examine how things change when we adopt a fully discrete perspective, in which the numerical scheme is directly written in terms of the DOFs and the entire analysis is carried out using a discrete norm on the corresponding space.
In a similar way to virtual objects, we use blackboard fonts to help visually identify fully discrete objects: $\discrete{V}_h$, $\discrete{I}_h$, $\discrete{v}_h$, $\discrete{a}_h$, etc.

We endow the fully discrete space $\discrete{V}_h \coloneqq \dofmap_h\virtual{V}_h=\Real^{\dim(\virtual{V}_h)}$ with a norm $\norm{\discrete{V}_h}{\cdot}$.
For all $T\in\Th$, let $\RTL:\discrete{V}_T\to L_{|T}$ be a reconstruction operator, and define the discrete linear form $\ell_h : \discrete{V}_h \to \Real$ by setting
\[
\ell_h(\discrete{v}_h) \coloneqq \sum_{T \in \Th} (f, \RTL \discrete{v}_T)_{L_{|T}}.
\]
After designing a bilinear form $\discrete{a}_h : \discrete{V}_h \times \discrete{V}_h \to \Real$, the fully discrete scheme reads:
Find $\discrete{u}_h \in \discrete{V}_h$ such that
\begin{equation}\label{eq:fully:discrete}
  \discrete{a}_h(\discrete{u}_h, \discrete{v}_h) = \ell_h(\discrete{v}_h)
  \qquad \forall \discrete{v}_h \in \discrete{V}_h.
\end{equation}
As in the previous section, we assume that the global bilinear form $\discrete{a}_h$ is assembled element-wise from local bilinear forms defined on $\discrete{V}_T \times \discrete{V}_T$, i.e., for all $(\discrete{w}_h, \discrete{v}_h) \in \discrete{V}_h \times \discrete{V}_h$,
\begin{equation}\label{eq:discrete:ah}
  \discrete{a}_h(\discrete{w}_h, \discrete{v}_h)
  = \sum_{T \in \Th} \discrete{a}_T(\discrete{w}_T, \discrete{v}_T).
\end{equation}
We additionally assume the following inf-sup condition and local boundedness property:
\begin{alignat}{2}\label{eq:discrete:inf-sup}
  \norm{\discrete{V}_h}{\discrete{w}_h}
  &\lesssim \sup_{\discrete{v}_h \in \discrete{V}_h \setminus \{ \discrete{0} \}} \frac{\discrete{a}_h(\discrete{w}_h,\discrete{v}_h)}{\norm{\discrete{V}_h}{\discrete{v}_h}}
  &&\qquad\forall \discrete{w}_h\in\discrete{V}_h,\\
  \label{eq:discrete.a.T:boundedness}
  \discrete{a}_T(\discrete{w}_T, \discrete{v}_T)
  &\lesssim \norm{\discrete{V}_T}{\discrete{w}_T}
  \norm{\discrete{V}_T}{\discrete{v}_T}
  &&\qquad \forall T\in\Th\,,\quad \forall (\discrete{w}_T, \discrete{v}_T) \in \discrete{V}_T \times \discrete{V}_T.
\end{alignat}
By \eqref{eq:discrete:inf-sup}, the error estimate in Theorem \ref{thm:error.estimate} holds, and becomes meaningful if we can estimate the dual norm of the consistency error.

In many cases, it is possible to derive an estimate of $\norm{\discrete{V}_h'}{\mathcal{E}_h}$ by direct manipulations of its expression; see Section \ref{sec:application:direct.manipulations} below for an example among many others.
This approach has been historically favored for methods in fully discrete formulation, and can be particularly useful to design schemes which comply with key physical properties; see, e.g., \cite{Brezzi.Lipnikov.ea:05,Beirao-da-Veiga.Lipnikov.ea:11,Beirao-da-Veiga.Lipnikov.ea:14,Bonelle.Ern:14,Di-Pietro.Ern.ea:14,Di-Pietro.Ern:15,Boffi.Di-Pietro:18,Di-Pietro.Droniou:20,Droniou.Eymard.ea:18,Eymard.Gallouet.ea:00}.
Such manipulations, however, can only be performed on a case-by-case basis (possibly relying on some generic concepts, see \cite{Droniou.Eymard.ea:16} and \cite[Section 7]{Droniou.Eymard.ea:18}).
We use here a more original technique that expands the approach presented in Section \ref{sec:abstract.framework:vem} by making use of a conforming lifting of the DOFs, which is not necessarily a virtual function.
The properties required on such lifting are collected in the following assumption in which, for all $T\in\Th$, $\mathcal P_T$ is, as before, a finite-dimensional (usually polynomial) subspace of $(V_I)_{|T}$.
\begin{assumption}[Conforming lifting]\label{ass:conforming.lifting}
  There exists $L_h : \discrete{V}_h \to V$ such that the following properties hold:
  \begin{enumerate}
  \item \emph{Consistency.} For all $T \in \Th$,
    \begin{equation}\label{eq:Lh:consistency:a}
      \discrete{a}_T(\dofmap_T w, \discrete{v}_T)
      = a_{|T}(w, (L_h \discrete{v}_h)_{|T})\qquad\forall (w, \discrete{v}_h) \in \mathcal{P}_T \times \discrete{V}_h;
    \end{equation}

  \item \emph{Projection on $\widetilde{\mathcal P}_T$.} For all $T \in \Th$,
    \begin{equation}\label{eq:Lh:consistency:ell}
      \Pi_T^L (L_h \discrete{v}_h)_{|T} = \RTL \discrete{v}_T
      \qquad \forall \discrete{v}_T \in \discrete{V}_T;
    \end{equation}

  \item \emph{Boundedness.} For all $\discrete{v}_h \in \discrete{V}_h$,
    \begin{equation}\label{eq:Lh:boundedness}
      \norm{V}{L_h \discrete{v}_h}
      \lesssim \norm{\discrete{V}_h}{\discrete{v}_h}.
    \end{equation}
  \end{enumerate}
\end{assumption}

\begin{remark}[Locality of the lifting]
  Notice that we do not assume that the lifting is local, i.e., for a given $T \in \Th$, $(L_h \discrete{v}_h)_{|T}$ may depend on the entire $\discrete{v}_h$ and not only on $\discrete{v}_T$.
  This additional flexibility may be needed, e.g., when $V_h$ does not mimic the continuity properties of $V$, as is the case in HHO methods.
\end{remark}

\begin{theorem}[Error estimate based on a conforming lifting]\label{thm:discrete:error.estimate}
  Let Assumption \ref{ass:conforming.lifting} hold, and further assume \eqref{eq:discrete:inf-sup} and \eqref{eq:discrete.a.T:boundedness}.
  Then, it holds
  \begin{equation}\label{eq:discrete:error.estimate}
    \norm{\discrete{V}_h}{\discrete{u}_h - \dofmap_h u}
    \lesssim \left[
      \sum_{T \in \Th} \left(
      \inf_{w\in\mathcal P_T}\left(\norm{V_{|T}}{u - w}^2
      + \norm{\discrete{V}_T}{\dofmap_T(u - w)}^2\right)
      + \norm{L_{|T}}{f - \Pi_T^L f}^2
      \right)
      \right]^{\frac12}.
  \end{equation}
\end{theorem}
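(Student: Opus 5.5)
By \eqref{eq:discrete:inf-sup}, Theorem \ref{thm:error.estimate} applies with $I_h = \dofmap_h$. We therefore get $\norm{\discrete{V}_h}{\discrete{u}_h - \dofmap_h u} \lesssim \norm{\discrete{V}_h'}{\mathcal{E}_h}$, where $\mathcal{E}_h(\discrete{v}_h) = \ell_h(\discrete{v}_h) - \discrete{a}_h(\dofmap_h u, \discrete{v}_h)$. The whole task is to bound this dual norm. Mirroring \eqref{eq:virtual:Eh:decomposition}, we insert $\ell(L_h\discrete{v}_h) = a(u, L_h\discrete{v}_h)$, which is legitimate because $L_h\discrete{v}_h\in V$, and use \eqref{eq:weak}. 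This yields
\[
\mathcal{E}_h(\discrete{v}_h)
= \underbrace{\ell_h(\discrete{v}_h) - \ell(L_h\discrete{v}_h)}_{\mathcal{E}_{\ell,h}}
+ \underbrace{a(u, L_h\discrete{v}_h) - \discrete{a}_h(\dofmap_h u, \discrete{v}_h)}_{\mathcal{E}_{a,h}}.
\]

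For $\mathcal{E}_{\ell,h}$, write $\ell(L_h\discrete{v}_h) = \sum_T (f, (L_h\discrete{v}_h)_{|T})_{L_{|T}}$ using \eqref{eq:rel.ell.L}. By \eqref{eq:Lh:consistency:ell}, $\RTL\discrete{v}_T = \Pi_T^L (L_h\discrete{v}_h)_{|T}$. Self-adjointness of $\Pi_T^L - \mathrm{Id}$ then gives $\mathcal{E}_{\ell,h} = \sum_T(\Pi_T^L f - f, (L_h\discrete{v}_h)_{|T})_{L_{|T}}$. Cauchy--Schwarz, the continuous injection $V\hookrightarrow L$, and \eqref{eq:Lh:boundedness} bound this by $(\sum_T\norm{L_{|T}}{f-\Pi_T^L f}^2)^{1/2}\norm{\discrete{V}_h}{\discrete{v}_h}$.

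For $\mathcal{E}_{a,h}$, split elementwise and fix $w\in\mathcal P_T$. Since $\dofmap_T$ is linear, \eqref{eq:Lh:consistency:a} gives
\[
a_{|T}(u, (L_h\discrete{v}_h)_{|T}) - \discrete{a}_T(\dofmap_T u, \discrete{v}_T)
= a_{|T}(u-w, (L_h\discrete{v}_h)_{|T}) + \discrete{a}_T(\dofmap_T(w-u), \discrete{v}_T).
\]
The first term is bounded by the $V_{|T}$-boundedness of $a_{|T}$ by $\norm{V_{|T}}{u-w}\norm{V_{|T}}{(L_h\discrete{v}_h)_{|T}}$. The second is bounded via \eqref{eq:discrete.a.T:boundedness} by $\norm{\discrete{V}_T}{\dofmap_T(u-w)}\norm{\discrete{V}_T}{\discrete{v}_T}$. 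We then take the infimum over $w$ in each $T$, sum over $T$, and apply discrete Cauchy--Schwarz. This requires two facts: $\sum_T \norm{V_{|T}}{\cdot}^2 = \norm{V}{\cdot}^2$, which holds by the assumed local-to-global structure, so that \eqref{eq:Lh:boundedness} controls the first factor; and $\sum_T\norm{\discrete{V}_T}{\discrete{v}_T}^2 \lesssim \norm{\discrete{V}_h}{\discrete{v}_h}^2$.

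The latter compatibility of local and global discrete norms is the point needing care. It is implicit in the assumption that local objects reconstruct global ones, and I would state it explicitly. Note that, unlike Theorem \ref{thm:vem:error.estimate}, no interpolation term $\norm{V}{u-\virtual{I}_h u}$ appears. The reason is that the discrete consistency \eqref{eq:Lh:consistency:a} is tested against $\dofmap_T w$, so the polynomial approximation is measured directly in the discrete norm. Combining both bounds, dividing by $\norm{\discrete{V}_h}{\discrete{v}_h}$, and taking the supremum gives \eqref{eq:discrete:error.estimate}.
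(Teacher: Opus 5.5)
Your proposal is correct and follows essentially the same route as the paper: Theorem~\ref{thm:error.estimate} with $I_h=\dofmap_h$, the splitting \eqref{eq:discrete:Eh:decomposition} of $\mathcal{E}_h$ through the lifting $L_h\discrete{v}_h$, the consistency \eqref{eq:Lh:consistency:a} plus local boundedness for $\mathcal{E}_{a,h}$, and the projection property \eqref{eq:Lh:consistency:ell} with self-adjointness of $\Pi_T^L-\mathrm{Id}$ for $\mathcal{E}_{\ell,h}$. The local-to-global compatibility $\sum_{T\in\Th}\norm{\discrete{V}_T}{\discrete{v}_T}^2\lesssim\norm{\discrete{V}_h}{\discrete{v}_h}^2$ that you flag is used only implicitly in the paper (it holds with equality for the norm \eqref{eq:application:discrete.norm}), so stating it explicitly is a sensible addition.
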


\begin{proof}
  Since we have assumed \eqref{eq:discrete:inf-sup}, by Theorem \ref{thm:error.estimate} we have the following estimate:
  \begin{equation}\label{eq:discrete:basic.error.estimate}
    \norm{\discrete{V}_h}{\discrete{u}_h - \dofmap_h u}
    \lesssim \alpha^{-1} \norm{\discrete{V}_h'}{\mathcal{E}_h}.
  \end{equation}
  We decompose the consistency error as follows:
  \begin{equation}\label{eq:discrete:Eh:decomposition}
    \mathcal{E}_h(\discrete{v}_h)
    = \underbrace{%
      \ell_h(\discrete{v}_h) - \ell(L_h \discrete{v}_h)
    }_{\mathcal{E}_{\ell,h}}
    + \underbrace{%
      a(u, L_h \discrete{v}_h) - \discrete{a}_h(\dofmap_h u, \discrete{v}_h).
    }_{\mathcal{E}_{a,h}}
  \end{equation}

  For all $T\in\Th$ and all $w\in\mathcal P_T$, it holds
  \[
  \begin{aligned}
    a_{|T}(u, (L_h \discrete{v}_h)_{|T})
    - \discrete{a}_T(\dofmap_T u, \discrete{v}_T)
    \overset{\eqref{eq:Lh:consistency:a}}&=
    a_{|T}(u-w, (L_h \discrete{v}_h)_{|T})
    + \discrete{a}_T(\dofmap_T (w-u), \discrete{v}_T)
    \\
    \overset{\eqref{eq:discrete.a.T:boundedness}}&\lesssim
    \norm{V_{|T}}{u - w} \norm{V_{|T}}{(L_h \discrete{v}_h)_{|T}}
    + \norm{\discrete{V}_T}{\dofmap_T (u - w)}
    \norm{\discrete{V}_T}{\discrete{v}_T}
    \\
    &\lesssim
    \left(\norm{V_{|T}}{u - w}^2+\norm{\discrete{V}_T}{\dofmap_T (u - w)}^2\right)^{\frac12}
    \left(\norm{V_{|T}}{(L_h \discrete{v}_h)_{|T}}^2+ \norm{\discrete{V}_T}{\discrete{v}_T}\right)^{\frac12},
  \end{aligned}
  \]
  where, in the second inequality, we have additionally used the continuity of $a_{|T}$ on $V_{|T}\times V_{|T}$.
  Taking the infimum over $w\in\mathcal P_T$, summing over $T\in\Th$ and using a Cauchy--Schwarz inequality together with \eqref{eq:Lh:boundedness}, we infer
  \begin{align}
    \mathcal{E}_{a,h}
    &= \sum_{T \in \Th} \left[
      a_{|T}(u, L_h \discrete{v}_h)
      - \discrete{a}_T(\dofmap_T u, \discrete{v}_T)
      \right]
    \nonumber\\
    &\lesssim \left[\sum_{T\in\Th}\inf_{w\in\mathcal P_T}\left(\norm{V_{|T}}{u - w}^2+\norm{\discrete{V}_T}{\dofmap_T (u - w)}^2\right)\right]^{\frac12}\norm{\discrete{V}_h}{\discrete{v}_h}.
    \label{eq:discrete:Eah:estimate}
  \end{align}

  For the error on the forcing term, we notice that, for all $T \in \Th$,
  \[
  (f, \RTL \discrete{v}_T-L_h \discrete{v}_h )_{L_{|T}}
  \overset{\eqref{eq:Lh:consistency:ell}}=
  (f, \Pi_T^L (L_h \discrete{v}_h)_{|T} -L_h \discrete{v}_h)_{L_{|T}}
  = (\Pi_T^L f -f, L_h \discrete{v}_h)_{L_{|T}},
  \]
  where the last step is justified by the fact that $\Pi_T^L - {\rm Id}$ is self-adjoint for the inner product of $L_{|T}$.
  Hence,
  \begin{equation}\label{eq:discrete:Elh:estimate}
    \mathcal{E}_{\ell,h}
    \lesssim \left(
    \sum_{T \in \Th} \norm{L_{|T}}{f - \Pi_T^L f}^2
    \right)^{\frac12} \norm{L}{L_h \discrete{v}_h}
    \lesssim \left(
    \sum_{T \in \Th} \norm{L_{|T}}{f - \Pi_T^L f}^2
    \right)^{\frac12} \norm{\discrete{V}_h}{\discrete{v}_h},
  \end{equation}
  where the conclusion follows using the continuity of the injection $V \hookrightarrow L$ followed by \eqref{eq:Lh:boundedness} to write $\norm{L}{L_h \discrete{v}_h} \lesssim \norm{V}{L_h \discrete{v}_h} \lesssim \norm{\discrete{V}_h}{\discrete{v}_h}$.

  Plugging \eqref{eq:discrete:Eah:estimate} and \eqref{eq:discrete:Elh:estimate} into \eqref{eq:discrete:Eh:decomposition} with $\discrete{v}_h \neq \discrete{0}$,
  dividing by $\norm{\discrete{V}_h}{\discrete{v}_h}$,
  and passing to the supremum over $\discrete{v}_h \in \discrete{V}_h \setminus \{ \discrete{0} \}$,
  we conclude that $\norm{\discrete{V}_h'}{\mathcal{E}_h}$ is bounded by the right-hand side of \eqref{eq:discrete:error.estimate}.
  Plugging this bound into \eqref{eq:discrete:basic.error.estimate}, the conclusion follows.
\end{proof}

\subsection{Discussion}

\subsubsection{Algebraic equivalence}

Since the DOFs mapping $\dofmap_h:\virtual{V}_h\to \discrete{V}_h$ is an isomorphism, the VEM scheme \eqref{eq:virtual:discrete} can be recast into the fully discrete formulation \eqref{eq:fully:discrete} setting
\begin{subequations}\label{eq:link:vem.fd}
  \begin{align}\label{eq:link:vem.fd.a}
    \discrete{a}_T(\discrete{v}_T,\discrete{w}_T)
    \coloneqq \virtual{a}_T(\dofmap_T^{-1}\discrete{v}_T,\dofmap_T^{-1}\discrete{w}_T)
    &\qquad \forall T \in \Th, \quad \forall (\discrete{v}_T,\discrete{w}_T) \in \discrete{V}_T \times \discrete{V}_T,
    \\ \label{eq:link:vem.fd.R}
    \RTL \coloneqq \Pi^L_T\circ \dofmap_T^{-1}&\qquad \forall T \in \Th.
  \end{align}
\end{subequations}

The main difference between the proofs of Theorems \ref{thm:discrete:error.estimate} and \ref{thm:vem:error.estimate} is that the conforming lifting $L_h \discrete{v}_h$ replaces the virtual function $\virtual{v}_h$ in the error decomposition; compare \eqref{eq:virtual:Eh:decomposition} with \eqref{eq:discrete:Eh:decomposition}.
Notice that, under the assumptions of Theorem \ref{thm:vem:error.estimate} and with the definitions \eqref{eq:link:vem.fd}, $L_h \coloneq \dofmap_h^{-1}$ defines a lifting that satisfies the first two properties in Assumption \ref{ass:conforming.lifting}:
\eqref{eq:Lh:consistency:a} follows from \eqref{eq:link:vem.fd.a} and \eqref{eq:virtual:consistency}, while \eqref{eq:Lh:consistency:ell} is just \eqref{eq:link:vem.fd.R}. The boundedness \eqref{eq:Lh:boundedness} is a more delicate matter that we discuss in Section \ref{sec:equivalence.norms}.
In practice, liftings $L_h$ satisfying Assumption \ref{ass:conforming.lifting} can be found without identifying a virtual space; see Section \ref{sec:fd.analysis.example}.

However, there is a subtle, yet important, difference between the estimates obtained with each approach: the estimate \eqref{eq:vem:error.estimate} includes the approximation error ${\norm{V}{u-\virtual{I}_hu}}$ of the virtual interpolator (which does not just come from the use of \eqref{eq:conforming.case:error.estimate}, but already appears in the bound \eqref{eq:vem.estimate.Eh} on $\norm{V'}{\mathcal E_h}$), while the estimate \eqref{eq:discrete:error.estimate} only relies on approximation properties of the local polynomial spaces. As shown in the next lemma, the latter can bound the former, \emph{provided the interpolator has boundedness properties}; see Remark \ref{rem:boundedness.Ih} regarding those.

\begin{lemma}[Approximation properties of the virtual interpolator]
  Endow $(V_I)_{|T}$ with a (semi-)norm $\norm{(V_I)_{|T}}{\cdot}$ and assume that the local interpolator $\virtual{I}_T:(V_I)_{|T}\to \virtual{V}_T$ satisfies the continuity bound
  \begin{equation}\label{eq:cont.IT}
    \norm{V_{|T}}{\virtual{I}_T v}\lesssim \norm{(V_I)_{|T}}{v}\qquad\forall v\in (V_I)_{|T}.
  \end{equation}
  Then, it holds
  \begin{equation*}  
    \norm{V_{|T}}{v-\virtual{I}_T v}
    \lesssim \inf_{w\in\mathcal P_T}\left(\norm{V_{|T}}{v - w}
    + \norm{(V_I)_{|T}}{v - w}
    \right)\qquad\forall v\in (V_I)_{|T}.
  \end{equation*}
\end{lemma}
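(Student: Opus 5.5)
The plan is the standard polynomial-reproduction argument. The key observation is that the local interpolator $\virtual{I}_T$ reproduces $\mathcal P_T$: since $\mathcal P_T\subset\virtual{V}_T$ and $\virtual{I}_T$ is defined by $\dofmap_T\virtual{I}_T v=\dofmap_T v$, unisolvence of the DOFs gives $\virtual{I}_T w=w$ for every $w\in\mathcal P_T$. I would state this first, noting that it is just the local version of \eqref{eq:virtual:Ih}: for $w \in \virtual{V}_T$, both $w$ and $\virtual{I}_T w$ lie in $\virtual{V}_T$ and have the same DOFs, hence coincide.

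Next, fix $v\in(V_I)_{|T}$ and an arbitrary $w\in\mathcal P_T$. Using $\virtual{I}_T w = w$ and the linearity of $\virtual{I}_T$, I write
\[
v-\virtual{I}_T v=(v-w)+(w-\virtual{I}_T v)=(v-w)-\virtual{I}_T(v-w).
\]
A triangle inequality then gives
\[
\norm{V_{|T}}{v-\virtual{I}_T v}\le\norm{V_{|T}}{v-w}+\norm{V_{|T}}{\virtual{I}_T(v-w)}.
\]
I apply the continuity bound \eqref{eq:cont.IT} to $v-w\in(V_I)_{|T}$, which is legitimate because $\mathcal P_T\subset\virtual{V}_T\subset V_I$ restricted to $T$, so $v-w$ belongs to the domain of $\virtual{I}_T$. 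This bounds the second term by $\norm{(V_I)_{|T}}{v-w}$, and taking the infimum over $w\in\mathcal P_T$ concludes.

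There is no real obstacle here. The only points that need care are the polynomial reproduction $\virtual{I}_T w=w$, which relies on the inclusion $\mathcal P_T\subset\virtual{V}_T$ and on local unisolvence, and the linearity of $\virtual{I}_T$, which follows from that of $\dofmap_T$. The hidden constant is exactly the one in \eqref{eq:cont.IT}, plus one, and is therefore independent of $h$ and $T$.
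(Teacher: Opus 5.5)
Your proof is correct and follows the paper's argument: polynomial reproduction $\virtual{I}_T w = w$ for $w\in\mathcal P_T$ (from $\mathcal P_T\subset\virtual{V}_T$ and the DOF-based definition of the interpolator), then the splitting $v-\virtual{I}_T v=(v-w)-\virtual{I}_T(v-w)$. This is followed by a triangle inequality, the continuity bound \eqref{eq:cont.IT}, and an infimum over $w\in\mathcal P_T$.
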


\begin{proof}
  Take $v\in (V_I)_{|T}$ and $w\in\mathcal P_T$. Since $\mathcal P_T\subset\virtual{V}_T$, the definition \eqref{eq:virtual:Ih} of the interpolator shows that $\virtual{I}_Tw=w$, and thus
  \[
  \norm{V_{|T}}{v-\virtual{I}_Tv}=\norm{V_{|T}}{v-w-\virtual{I}_T(v-w)}\le \norm{V_{|T}}{v-w}+\norm{V_{|T}}{\virtual{I}_T(v-w)}
  \overset{\eqref{eq:cont.IT}}\lesssim \norm{V_{|T}}{v-w}+\norm{(V_I)_{|T}}{v-w}.
  \]
  Taking the infimum over $w \in \mathcal{P}_T$ concludes the proof.
\end{proof}

\subsubsection{The need for equivalence between discrete and continuous norms}\label{sec:equivalence.norms}

So far, we have not discussed the choice of the norm on $\discrete{V}_h$. Following \eqref{eq:link:vem.fd}, a possible choice would be
\begin{equation}\label{eq:discrete.norm.naive}
  \norm{\discrete{V}_h}{\discrete{v}_h}\coloneq \norm{V}{\dofmap_h^{-1}\discrete{v}_h}.
\end{equation}
Together with the choice $L_h=\dofmap^{-1}_h$, this would make the boundedness \eqref{eq:Lh:boundedness} trivial.
However, the choice \eqref{eq:discrete.norm.naive} has several drawbacks. First of all, in general, one cannot explicitly compute $\dofmap_h^{-1}\discrete{v}_h$, which means that $\norm{\discrete{V}_h}{\discrete{v}_h}$ would not be computable in practice (even though the vector $\discrete{v}_h$ itself is completely explicit!).
Second, in most situations the DOFs have specific meanings (e.g., polynomial moments on mesh entities) which drive more natural (and computable) choices for the norm on $\discrete{V}_h$, such as scaled $L^2$-norms of polynomials on mesh entities.

For these reasons, $\norm{\discrete{V}_h}{\cdot}$ is more often chosen independently of the norm on $V$. However, requiring the well-posedness of the virtual and fully discrete schemes  \eqref{eq:virtual:discrete}  and  \eqref{eq:fully:discrete} may demand a uniform (in $h$) equivalence between these norms, as we will see in what follows.

Instead of the most general inf-sup condition, let us consider the case where the stability results from coercivity.
Specifically, let us consider the following two sets of conditions:

\begin{itemize}
\item There are strictly positive real numbers $\alpha_V$ and $\beta_V$ independent of $h$ such that, for all $T\in\Th$,
  \begin{subequations}\label{eq:C1}
    \begin{alignat}{2}
      \alpha_V \norm{V_{|T}}{\virtual{v}_T}^2
      &\le \virtual{a}_T(\virtual{v}_T,\virtual{v}_T)
      &\quad& \forall \virtual{v}_T \in \virtual{V}_T,
      \label{eq:C1:coercivity}\\
      \virtual{a}_T(\virtual{w}_T, \virtual{v}_T)
      &\le \beta_V \norm{V_{|T}}{\virtual{w}_T} \norm{V_{|T}}{\virtual{v}_T}
      &\quad& \forall (\virtual{w}_T, \virtual{v}_T) \in \virtual{V}_T \times \virtual{V}_T;
      \label{eq:C1:boundedness}
    \end{alignat}
  \end{subequations}

\item There are strictly positive real numbers $\alpha_{\discrete{V}}$ and $\beta_{\discrete{V}}$ independent of $h$ such that, for all $T\in\Th$,
  \begin{subequations}\label{eq:C2}
    \begin{alignat}{2}\label{eq:C2:coercivity}
      \alpha_{\discrete{V}} \norm{\discrete{V}_T}{\discrete{v}_T}^2
      &\le \discrete{a}_T(\discrete{v}_T,\discrete{v}_T)
      &\quad& \forall \discrete{v}_T \in \discrete{V}_T,
      \\ \label{eq:C2:boundedness}
      \discrete{a}_T(\discrete{w}_T, \discrete{v}_T)
      &\le \beta_{\discrete{V}} \norm{\discrete{V}_T}{\discrete{w}_T} \norm{\discrete{V}_T}{\discrete{v}_T}
      &\quad& \forall (\discrete{w}_T, \discrete{v}_T) \in \discrete{V}_T \times \discrete{V}_T.
    \end{alignat}
  \end{subequations}
\end{itemize}
Notice, in passing, that the local maps $\norm{\discrete{V}_T}{\cdot}$ and $\norm{V_{|T}}{\cdot}$ that appear above could be seminorms instead of norms, but this bears no influence on the following steps.

\begin{proposition}[Stability conditions and norm equivalence]\label{prop:C1.C2.E}
  Let $\discrete{a}_h$ be defined by \eqref{eq:link:vem.fd.a}.
  If \eqref{eq:C1} and \eqref{eq:C2} hold, then the continuous and discrete norms are equivalent in the following sense:
  There exists a real number $\eta \ge 1$ independent of $h$ such that, for all $T \in \Th$,
  \begin{equation}\label{eq:E}
    \eta^{-1} \norm{\discrete{V}_T}{\dofmap_T \virtual{v}_T}
    \le \norm{V_{|T}}{\virtual{v}_T}
    \le \eta \norm{\discrete{V}_T}{\dofmap_T \virtual{v}_T}
    \qquad \forall \virtual{v}_T \in \virtual{V}_T.
  \end{equation}
  Moreover, if \eqref{eq:E} holds, then \eqref{eq:C1} and \eqref{eq:C2} are equivalent.
\end{proposition}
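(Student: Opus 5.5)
The plan is to chain the coercivity of one form with the boundedness of the other through the identity \eqref{eq:link:vem.fd.a}. With $\discrete{a}_T$ defined this way, for any $\virtual{v}_T\in\virtual{V}_T$ and $\discrete{v}_T\coloneq\dofmap_T\virtual{v}_T$ we have $\discrete{a}_T(\discrete{v}_T,\discrete{v}_T)=\virtual{a}_T(\virtual{v}_T,\virtual{v}_T)$. Both equivalence constants then follow from two-sided sandwiches on this single quantity.

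\textbf{First inequality in \eqref{eq:E}.} Use \eqref{eq:C2:coercivity}, then the identity, then \eqref{eq:C1:boundedness} with $\virtual{w}_T=\virtual{v}_T$:
\[
\alpha_{\discrete{V}}\norm{\discrete{V}_T}{\dofmap_T\virtual{v}_T}^2\le\discrete{a}_T(\dofmap_T\virtual{v}_T,\dofmap_T\virtual{v}_T)=\virtual{a}_T(\virtual{v}_T,\virtual{v}_T)\le\beta_V\norm{V_{|T}}{\virtual{v}_T}^2 .
\]
This gives $\norm{\discrete{V}_T}{\dofmap_T\virtual{v}_T}\le(\beta_V/\alpha_{\discrete{V}})^{1/2}\norm{V_{|T}}{\virtual{v}_T}$.

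\textbf{Second inequality in \eqref{eq:E}.} Symmetrically, combining \eqref{eq:C1:coercivity} with \eqref{eq:C2:boundedness} gives $\norm{V_{|T}}{\virtual{v}_T}\le(\beta_{\discrete{V}}/\alpha_V)^{1/2}\norm{\discrete{V}_T}{\dofmap_T\virtual{v}_T}$. Taking
\[
\eta\coloneq\max\{1,(\beta_V/\alpha_{\discrete{V}})^{1/2},(\beta_{\discrete{V}}/\alpha_V)^{1/2}\}
\]
yields \eqref{eq:E} with $\eta$ independent of $h$ and $T$. This constant works as stated because the constants in \eqref{eq:C1}--\eqref{eq:C2} are uniform.

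\textbf{Equivalence of \eqref{eq:C1} and \eqref{eq:C2} under \eqref{eq:E}.} Since $\dofmap_T:\virtual{V}_T\to\discrete{V}_T$ is bijective, every $\discrete{v}_T$ is of the form $\dofmap_T\virtual{v}_T$, so statements for all $\discrete{v}_T$ and for all $\virtual{v}_T$ can be transferred freely.

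Assume \eqref{eq:C1}. Then
\[
\discrete{a}_T(\discrete{v}_T,\discrete{v}_T)=\virtual{a}_T(\virtual{v}_T,\virtual{v}_T)\ge\alpha_V\norm{V_{|T}}{\virtual{v}_T}^2\ge\alpha_V\eta^{-2}\norm{\discrete{V}_T}{\discrete{v}_T}^2 ,
\]
which is \eqref{eq:C2:coercivity} with $\alpha_{\discrete{V}}=\alpha_V\eta^{-2}$. Likewise,
\[
\discrete{a}_T(\discrete{w}_T,\discrete{v}_T)=\virtual{a}_T(\virtual{w}_T,\virtual{v}_T)\le\beta_V\eta^2\norm{\discrete{V}_T}{\discrete{w}_T}\norm{\discrete{V}_T}{\discrete{v}_T},
\]
which is \eqref{eq:C2:boundedness}. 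The converse implication is identical with the roles of the two sides of \eqref{eq:E} exchanged.

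\textbf{Obstacles.} None of these steps is a real obstacle; the argument is purely algebraic. The only points to check are that the seminorm case causes no trouble, which holds since no division by norms occurs, and that the bijectivity of $\dofmap_T$ (unisolvence) is what allows quantifying over all $\discrete{v}_T$.
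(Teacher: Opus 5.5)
Your proposal is correct and follows essentially the same route as the paper. You sandwich $\discrete{a}_T(\dofmap_T\virtual{v}_T,\dofmap_T\virtual{v}_T)=\virtual{a}_T(\virtual{v}_T,\virtual{v}_T)$ between the coercivity of one form and the boundedness of the other to obtain \eqref{eq:E}, and then transfer coercivity and boundedness through \eqref{eq:E} and the bijectivity of $\dofmap_T$, getting the same constants as the paper (e.g.\ $\alpha_{\discrete{V}}=\alpha_V\eta^{-2}$).
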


  \begin{remark}[Norm equivalence]
    In symbols, Proposition \ref{prop:C1.C2.E} states that
    \[
    \text{
      $\big(\eqref{eq:C1} \land \eqref{eq:C2}\big) \implies \eqref{eq:E}$\quad
      and\quad
      $\eqref{eq:E} \implies \big(
      \eqref{eq:C1} \iff \eqref{eq:C2}
      \big)$.
    }
    \]
    Notice that it does not seem possible to have the full equivalence $\eqref{eq:E} \iff \left( \eqref{eq:C1} \iff \eqref{eq:C2} \right)$ as it would require to prove, in addition, that $\left( \lnot\eqref{eq:C1} \land \lnot\eqref{eq:C2} \right) \implies \eqref{eq:E}$.
    Inferring $\big(\eqref{eq:C1} \land \eqref{eq:C2}\big) \implies \eqref{eq:E}$ when stability results from inf-sup conditions is also not entirely obvious.
  \end{remark}

\begin{proof}[Proof of Proposition \ref{prop:C1.C2.E}]
  Assume \eqref{eq:C1} and \eqref{eq:C2}. For all $T\in\Th$ and $\virtual{v}_T\in\virtual{V}_T$, write
  \[
  \norm{\discrete{V}_T}{\dofmap_T\discrete{v}_T}^2
  \overset{\eqref{eq:C2:coercivity}}\le \alpha_{\discrete{V}}^{-1}\discrete{a}_T(\dofmap_T\virtual{v}_T,\dofmap_T\virtual{v}_T)
  \overset{\eqref{eq:link:vem.fd.a}}=\alpha_{\discrete{V}}^{-1}\virtual{a}_T(\virtual{v}_T,\virtual{v}_T)
  \overset{\eqref{eq:C1:boundedness}}\le\alpha_{\discrete{V}}^{-1}\beta_V\norm{V_{|T}}{\virtual{v}_T}^2.
  \]
  Taking the square root concludes the proof of the first inequality in \eqref{eq:E}. The second is obtained the same way, reversing the roles of $\discrete{a}_T$ and $\virtual{a}_T$.

  Assume now that \eqref{eq:E} holds, and let us prove that \eqref{eq:C1} implies \eqref{eq:C2} (the converse follows by reversing the roles of the virtual and fully discrete forms and spaces).
  We can write, for all $T \in \Th$ and all $\discrete{v}_T \in \discrete{V}_T$,
  \[
  \norm{\discrete{V}_T}{\discrete{v}_T}^2
  \overset{\eqref{eq:E}}\le \eta^2 \norm{V_{|T}}{\dofmap_T^{-1}\discrete{v}_T}^2
  \overset{\eqref{eq:C1:coercivity}}\le \eta^2\alpha_V^{-1}\virtual{a}_T(\dofmap_T^{-1}\discrete{v}_T,\dofmap_T^{-1}\discrete{v}_T)
  \overset{\eqref{eq:link:vem.fd.a}}=\eta^2\alpha_V^{-1}\discrete{a}_T(\discrete{v}_T,\discrete{v}_T),
  \]
  which proves \eqref{eq:C2:coercivity} with $\alpha_{\discrete{V}} = \alpha_V \eta^{-2}$.
  The proof of \eqref{eq:C2:boundedness} is done in a similar way.
\end{proof}

\begin{remark}[Boundedness of the interpolator]\label{rem:boundedness.Ih}
  If the second inequality in the norm equivalence \eqref{eq:E} holds and, for all $T \in \Th$, the local DOFs map $\dofmap_T:(V_I)_{|T}\to \discrete{V}_T$ is continuous uniformly in $h$, i.e.,
  \begin{equation}\label{eq:cont.dofs}
    \norm{\discrete{V}_T}{\dofmap_T v}\lesssim \norm{(V_I)_{|T}}{v}
    \qquad \forall v\in (V_I)_{|T},
  \end{equation}
  then the interpolator satisfies the continuity property \eqref{eq:cont.IT}.
  Indeed, for all $v\in(V_I)_{|T}$, we can write
  \[
  \norm{V_{|T}}{\virtual{I}_Tv}\overset{\eqref{eq:E}}\lesssim \norm{\discrete{V}_T}{\dofmap_T\virtual{I}_Tv}
  \overset{\eqref{eq:virtual:Ih}}=\norm{\discrete{V}_T}{\dofmap_Tv}\overset{\eqref{eq:cont.dofs}}\lesssim
  \norm{(V_I)_{|T}}{v}.
  \]
  This observation, combined with the fact that a certain (polynomial) space $\mathcal{P}_T$ is contained in $V_T$, can be used to derive (local) VEM approximation properties for the interpolant $\virtual{I}_h u$; see for instance \cite{Beirao-da-Veiga.Mascotto.ea:22}.
\end{remark}

\subsection{Practical construction of a virtual bilinear form}\label{sec:practical.construction.vem}

Besides polynomial consistency \eqref{eq:virtual:consistency} and stability (which is assumed here to hold in the sense of coercivity, cf.~\eqref{eq:C1}), another criterion when defining the local virtual bilinear forms $\virtual{a}_T$, $T \in \Th$, is their \emph{computability}: they must be expressed using implementable formulas purely based on the knowledge of the DOFs of the virtual functions.
In practice, this is often achieved through the design of computable projections of the virtual functions.

Specifically, for any $T \in \Th$, and denoting as before by $\mathcal{P}_T \subset \virtual{V}_T$ a finite-dimensional (typically polynomial) local space, a virtual element approach relies on designing a suitable mapping $\Pi_T^a : \virtual{V}_T \to \mathcal{P}_T$ that satisfies
\begin{equation}\label{eq:Pi.polynomial}
  \Pi_T^a w = w\qquad\forall w\in \mathcal P_T
\end{equation}
and that is computable, in the sense that there exists an implementable operator $\RTa:\discrete{V}_T\to\mathcal P_T$ such that $\Pi_T^a=\RTa\circ\dofmap_T$.
Notice that, by \eqref{eq:Pi.polynomial}, $\Pi_T^a$ is a projector; see, e.g., \cite[Proposition 1.35]{Di-Pietro.Droniou:20}.
The local virtual bilinear form is then taken as
\[
\virtual{a}_T(\virtual{w}_T, \virtual{v}_T)
\coloneqq \tilde{\virtual{a}}_T(\virtual{w}_T,\virtual{v}_T)
+ \virtual{s}_T(\virtual{w}_T - \Pi_T^a \virtual{w}_T, \virtual{v}_T - \Pi_T^a \virtual{v}_T),
\]
where $\tilde{\virtual{a}}_T : \virtual{V}_T \times \virtual{V}_T \to \Real$ is a local bilinear form in charge of consistency, while
$\virtual{s}_T : \virtual{V}_T \times \virtual{V}_T \to \Real$ is a positive definite stabilization bilinear form.
By \eqref{eq:Pi.polynomial}, the consistency property \eqref{eq:virtual:consistency} translates into the condition
\begin{equation}\label{eq:virtual:tilde.a.T}
  \tilde{\virtual{a}}_T(w, \virtual{v}_T)
  = a_{|T}(w, \virtual{v}_T)
  \qquad \forall (w, \virtual{v}_T) \in \mathcal{P}_T \times \virtual{V}_T.
\end{equation}

\begin{remark}[Consistent bilinear form and projector]\label{rem:cons-bil}
  A typical choice for $\tilde{\virtual{a}}_T$, which ensures its computability, is
  \[
  \tilde{\virtual{a}}_T(\virtual{w}_T, \virtual{v}_T)=a_{|T}(\Pi_T^a\virtual{w}_T,\Pi_T^a\virtual{w}_T).
  \]
  By \eqref{eq:Pi.polynomial}, this choice satisfies the consistency \eqref{eq:virtual:tilde.a.T} only if $\Pi_T^a$ is an elliptic projector associated with $a_{|T}$, that is
  \[
  a_{|T}(w,\Pi_T^a\virtual{v}_T)=a_{|T}(w,\virtual{v}_T)\qquad\forall (w,\virtual{v}_T)\in\mathcal P_T\times \virtual{V}_T .
  \]
  In certain cases (e.g., when locally varying physical coefficients are considered), this elliptic projector may not be computable in the sense made precise above.
  Whenever the involved form can be written as
  \[
  a(w,v) = m(\mathcal{D} w, \mathcal{D} v) \qquad \forall (w,v) \in V \times V
  \]
  with $\mathcal{D}$ denoting a differential operator and $m$ a bilinear form, one can often circumvent this problem by letting
  \begin{equation}\label{eq:atilde:modetwo}
    \tilde{\virtual{a}}_T(\virtual{w}_T, \virtual{v}_T)
    \coloneq m_{|T}(\Pi_T^\circ \mathcal{D} \virtual{v}_T, \Pi_T^\circ \mathcal{D} \virtual{w}_T)
    \qquad \forall (\virtual{v}_T, \virtual{w}_T) \in \virtual{V}_T \times \virtual{V}_T,
  \end{equation}
  with $\Pi_T^\circ$ denoting a projector on a polynomial subspace selected in such a way that $\Pi_T^\circ \mathcal{D} \virtual{v}_T$ is computable for any $\virtual{v}_T \in \virtual{V}_T$.
\end{remark}

Let us now discuss the stability of the virtual bilinear form. Condition \eqref{eq:C1} expresses two requirements on the bilinear form $\virtual{s}_T$: that it yields stability and that it scales in $h_T$ like the consistent term.
In the VEM literature, designing a computable stabilization that ensures \eqref{eq:C1} is often one of the most challenging aspects. Available approaches typically follow, implicitly or explicitly, the path indicated by Proposition \ref{prop:C1.C2.E}. First, the virtual stabilization form is expressed as
\begin{equation}\label{eq:design.virtual.sT}
\virtual{s}_T(\virtual{w}_T,\virtual{v}_T)=\discrete{s}_T(\dofmap_T\virtual{w}_T,\dofmap_T\virtual{v}_T)\qquad
  \forall (\virtual{w}_T,\virtual{v}_T)\in\virtual{V}_T\times \virtual{V}_T,
\end{equation}
where $\discrete{s}_T:\discrete{V}_T\times \discrete{V}_T\to\Real$ is a discrete stabilization form designed to ensure \eqref{eq:C2} (with the link \eqref{eq:link:vem.fd} between virtual and discrete bilinear form); this design is often simplified by the choice of the norm on $\discrete{V}_T$. Then, \eqref{eq:C1} is obtained in a second stage, which involves proving (explicitly or implicitly) the norm equivalence \eqref{eq:E}. In Section \ref{sec:nodal.virtual.analysis} we will show, in particular, how a proof of \eqref{eq:E} is actually hidden in classical VEM arguments.
Such norm equivalence (and, in particular, the second inequality expressing the control of the norm of the virtual function by that of its DOFs) actually represents the main difficulty in the design of VEM stabilizations.
A historical survey of stabilization in VEM can be found in \cite{Mascotto:23}.


\section{Application to a nodal scheme for the Poisson equation}\label{sec:application}

We illustrate the abstract framework developed in the previous section by applying it to the convergence analysis of a scheme for the two-dimensional Poisson equation; see Remark \ref{rem:consistency.error}.
We therefore consider a polygonal domain $\Omega \subset \Real^2$, a source term $f\in L^2(\Omega)$, and set
\begin{equation}\label{eq:poisson.2}
  \begin{gathered}
    V = H_0^1(\Omega)\text{ with norm }\norm{V}{v}=\norm{L^2(\Omega;\Real^2)}{\nabla v},\\
    a(w,v) = \int_\Omega \nabla u \cdot \nabla v,\qquad
    \ell(v) = \int_\Omega f v.
  \end{gathered}
\end{equation}
According to \eqref{eq:rel.ell.L}, this sets $L=L^2(\Omega)$.
The scheme we will consider for this problem is based on the $H^1$-like space and operators of the DDR method \cite{Di-Pietro.Droniou:23}. To better connect the virtual approach to this method, we do not follow exactly the original $H^1$-conforming VEM spaces \cite{Beirao-da-Veiga.Brezzi.ea:13, Ahmad.Alsaedi.ea:13,Beirao-da-Veiga.Brezzi.ea:16} but use the variant introduced in \cite[Section 6.2.1]{Beirao-da-Veiga.Dassi.ea:22}.

\subsection{Discrete problem}

\subsubsection{Mesh and polynomial spaces}\label{sec:mesh}

We consider a mesh $(\Th,\Eh,\Vh)$ of $\Omega$, where $\Th$ is the set of disjoint open polygonal elements, $\Eh$ the set of disjoint open edges, and $\Vh$ the set of vertices (endpoints of edges). For each $T\in\Th$ we assume the existence of $\ET\subset\Eh$ such that $\partial T=\cup_{E\in\ET} \overline{E}$, and we assume that each $E\in\Eh$ belongs to at least one $\ET$. The set of all vertices on $\partial T$ is denoted by $\VT$.
We additionally denote by $h_T$ the diameter of $T$ and set $h \coloneqq \max_{T\in\Th} h_T$.

The mesh is assumed to be regular in the following sense: there exists $\varrho>0$ independent of $h$ such that each $T \in \Th$ is star-shaped with respect to a ball of center $x_T\in T$ and radius $\varrho h_T$ and, for all $T\in\Th$ and $E\in\ET$, the diameter of $E$ is $\ge \varrho h_T$.
Under these assumptions, there exists a conforming triangular submesh $\Sh$ which is also regular, and that we will use in Section \ref{sec:fd.analysis.example} below to construct a conforming lifting matching Assumption \ref{ass:conforming.lifting}.
According to the notation $\lesssim$ introduced in Section \ref{sec:abstract.framework:vem.error.analysis}, an inequality of the type $a\le Cb$ with $C$ depending only on $\Omega$ and $\varrho$ will be shortened into $a\lesssim b$.

For $X \in \Th \cup \Eh$ and $\ell \in \Natural$, the space of polynomials on $X$ of total degree $\le \ell$ is denoted by $\Poly{\ell}(X)$, with the additional convention that $\Poly{-1}(X)=\{0\}$. The space $\Poly{\ell}(X;\Real^2)$ gathers all functions $X\to\Real^2$ whose components belong to $\Poly{\ell}(X)$.
The $L^2$-orthogonal projector onto $\Poly{\ell}(X)$ or $\Poly{\ell}(X;\Real^2)$ is denoted by $\lproj{X}{\ell}$.

\subsubsection{Spaces and DOFs}

Let an integer $k \ge 0$ be fixed.
For $T\in\Th$, we pick a complement $\Poly{k-1|k+1}(T)$ of $\Poly{k-1}(T)$ in $\Poly{k+1}(T)$ and
define the local space
\begin{equation}\label{eq:application:def.VT}
  V_T \coloneqq
  \begin{aligned}[t]
    \Big\{
    \virtual{v} \in H^1(T) \;:\;
    &\text{%
      $\virtual{v}_{|E} \in \Poly{k+1}(E)$ for all $E \in \ET$,
      $\Delta \virtual{v} \in \Poly{k+1}(T)$,
    }
    \\
    &\text{
      $\int_T (\nabla \virtual{v} - \lproj{T}{k} \nabla \virtual{v}) \cdot (x - x_T) w = 0$
      for all $w \in \Poly{k-1|k+1}(T)$
    }
    \Big\}.
  \end{aligned}
\end{equation}
At the global level, we set
\[
\widetilde{V}_h \coloneqq \left\{
\virtual{v}_h \in H^1(\Omega) \;:\;
\text{
  $\virtual{v}_T \coloneqq (\virtual{v}_h)_{|T} \in V_T$ for all $T \in \Th$
}
\right\},\qquad
V_h \coloneqq \widetilde{V}_h \cap V.
\]
As a domain for the interpolator we can take, e.g., $V_I = V \cap H^{1+\epsilon}(\Omega)$ with $\epsilon > 0$.
The DOF map $\dofmap_h$ collects:
\begin{itemize}
\item For each element $T \in \Th$, the polynomial moments $\int_T v \, w$ for all $w \in \Poly{k-1}(T)$;

\item For each edge $E \in \Eh$, the polynomial moments $\int_E v \, w$ for all $w \in \Poly{k-1}(E)$;

\item For each vertex $V \in \Vh$, the value $v(x_V)$.
\end{itemize}
For any $X \in \Th \cup \Eh$, the above moments define a unique polynomial function $v_X \in \Poly{k-1}(X)$, which is nothing but the $L^2$-orthogonal projection $\lproj{X}{k-1} v$ of $v$ onto $\Poly{k-1}(X)$.
In view of this fact, identifying polynomial moments and $L^2$-orthogonal projections, we will consider, with a common slight abuse in terminology, that the space of DOFs corresponding to $\widetilde{V}_h$ is
\begin{multline*}
  \widetilde{\discrete{V}}_h \coloneqq \big\{
  \discrete{v}_h = ( (v_T)_{T \in \Th}, (v_E)_{E \in \Eh}, (v_V)_{V \in \Vh} ) \;:\;
  \\
  \text{$v_T \in \Poly{k-1}(T)$ for all $T \in \Th$,
    $v_E \in \Poly{k-1}(E)$ for all $E \in \Eh$,
    $v_V \in \Real$ for all $V \in \Vh$}
  \big\},
\end{multline*}
while $\discrete{V}_h$ is the subspace of $\widetilde{\discrete{V}}_h$ where all edge and vertex boundary components are zero.
Consistently with the above interpretation of DOFs, for any $v \in V_I$, we set
\begin{equation}\label{eq:application:sigmah}
  \dofmap_h v \coloneqq (
  (\lproj{T}{k-1} v)_{T \in \Th},
  (\lproj{E}{k-1} v)_{E \in \Eh},
  (v(x_V))_{V \in \Vh}
  ).
\end{equation}

The VEM discretization of \eqref{eq:poisson.2} is then based on the choices of polynomial spaces
\[
\text{$\mathcal P_T=\Poly{k+1}(T)$ and $\widetilde{\mathcal P}_T=\Poly{k}(T)$ for all $T\in\Th$}.
\]
This choice sets $\Pi^L_T =\lproj{T}{k}$ and we notice that, restricted to $\virtual{V}_T$, this projection is computable from the DOFs (this is a consequence of Lemma \ref{lem:GT.RT.virtual} below). Consequently, $\RTL=\Pi^L_T\circ\dofmap_T^{-1}:\discrete{V}_T\to\Poly{k}(T)$ is well-defined. To describe the bilinear form, we need to first introduce a discrete gradient and potential reconstructions.

\subsubsection{Gradient and potential reconstructions}

Let $\discrete{v}_h = ((v_T)_{T \in \Th}, (v_E)_{E \in \Eh}, (v_V)_{V \in \Vh}) \in \discrete{V}_h$ and, for each $E \in \Eh$, denote by $\discrete{v}_E = ( v_E, (v_V)_{V \in \VE})$ the restriction of $\discrete{v}_h$ to $E$ (with $\VE \subset \Vh$ the set of endpoints of $E$).
We can reconstruct a unique polynomial $\trrec{E} \discrete{v}_E \in \Poly{k+1}(E)$ on $E$ enforcing $\lproj{E}{k-1} \trrec{E} \discrete{v}_E = v_E$ and $\trrec{E} \discrete{v}_E (x_V) = v_V$ for all $V \in \VE$.
For any $T \in \Th$, we let $\trrec{\partial T} \discrete{v}_T\in C(\partial T)$ be such that $(\trrec{\partial T} \discrete{v}_T)_{|E} = \trrec{E} \discrete{v}_E$ for all $E \in \ET$.
We next define the discrete gradient $G_T \discrete{v}_T \in \Poly{k}(T;\Real^2)$ such that
\begin{equation}\label{eq:def.GT}
  \int_T G_T \discrete{v}_T \cdot \tau
  = - \int_T v_T \operatorname{div} \tau
  + \int_{\partial T} \trrec{\partial T} \discrete{v}_T \, (\tau \cdot n_T)
  \qquad \forall \tau \in \Poly{k}(T;\Real^2),
\end{equation}
where $n_T$ denotes the unit normal field on $\partial T$ pointing out of $T$.
Finally, we let $\RTa \discrete{v}_T \in \Poly{k+1}(T)$ be the potential reconstruction such that
\begin{equation}\label{eq:application:RTa}
  \int_T \RTa \discrete{v}_T \operatorname{div} \tau
  = - \int_T G_T \discrete{v}_T \cdot \tau
  + \int_{\partial T} \trrec{\partial T} \discrete{v}_T \, (\tau \cdot n_T)
  \qquad \forall \tau \in (x - x_T) \Poly{k+1}(T)
\end{equation}
(which is well-defined because $\operatorname{div}:(x - x_T) \Poly{k+1}(T)\to \Poly{k+1}(T)$ is an isomorphism) and set
\begin{equation}\label{eq:application:PiTa}
  \Pi_T^a \coloneqq \RTa \circ \dofmap_T.
\end{equation}
It can easily be checked that
\begin{equation}\label{eq:application:GT.RaT:polynomial.consistency}
  \text{
    $\trrec{\partial T}(\dofmap_T w)=w_{|\partial T}$,
    $G_T (\dofmap_T w) = \nabla w$,
    and $\RTa (\dofmap_T w) = \Pi^a_T w = w$
    for all $w \in \Poly{k+1}(T)$.
  }
\end{equation}

\begin{remark}[Validity of \eqref{eq:application:RTa}]\label{eq:application:RTa:validity}
  Using \eqref{eq:def.GT} and the direct decomposition
  \[
  \Poly{k}(T;\Real^2) = {\operatorname{\mathbf{rot}}\Poly{k+1}(T)\oplus (x-x_T)\Poly{k-1}(T)}
  \subset {\operatorname{\mathbf{rot}}\Poly{k+1}(T)\oplus (x-x_T)\Poly{k+1}(T)},
  \]
  and the identity $\operatorname{div} \operatorname{\mathbf{rot}} = 0$, it can be checked that formula \eqref{eq:application:RTa} also holds for any $\tau \in \Poly{k}(T;\Real^2)$.
\end{remark}

\begin{lemma}[Approximation properties of $G_T$]\label{lem:approx.GT}
  For all $T \in \Th$, all $v \in H^{k+2}(T)$, and all $m \in \{0,\ldots,k+1\}$, it holds
  \begin{equation}\label{eq:application:GT:approximation}
    \seminorm{H^m(T;\Real^2)}{\nabla v - G_T \dofmap_T v}
    \lesssim h_T^{k+1-m} \seminorm{H^{k+2}(T)}{v}.
  \end{equation}
\end{lemma}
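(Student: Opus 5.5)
The idea is to use polynomial invariance together with a boundedness estimate for $G_T\dofmap_T$, in the Bramble--Hilbert spirit. Fix $T\in\Th$ and $v\in H^{k+2}(T)$, and let $w\in\Poly{k+1}(T)$ be an averaged Taylor polynomial (or any optimal polynomial approximation) of $v$ on the ball with respect to which $T$ is star-shaped. Standard results under the regularity assumption on the mesh give
\[
\seminorm{H^s(T)}{v-w}\lesssim h_T^{k+2-s}\seminorm{H^{k+2}(T)}{v}
\qquad \text{for } s\in\{0,\ldots,k+2\}.
\]
By \eqref{eq:application:GT.RaT:polynomial.consistency}, $G_T\dofmap_T w=\nabla w$. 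Hence
\[
\nabla v - G_T\dofmap_T v = \nabla(v-w) - G_T\dofmap_T(v-w),
\]
and the first term is handled directly by the estimate above with $s=m+1$. It therefore remains to bound $\seminorm{H^m(T;\Real^2)}{G_T\dofmap_T z}$ for $z\coloneqq v-w$.

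Since $G_T\dofmap_T z\in\Poly{k}(T;\Real^2)$, an inverse inequality on polynomials (valid on regular polygons via the regular submesh $\Sh$ or via the inscribed ball) gives
\[
\seminorm{H^m(T;\Real^2)}{G_T\dofmap_T z}\lesssim h_T^{-m}\norm{L^2(T;\Real^2)}{G_T\dofmap_T z}.
\]
We then bound the $L^2$ norm by taking $\tau = G_T\dofmap_T z$ in \eqref{eq:def.GT}. The volume term is treated with $\norm{L^2(T)}{\lproj{T}{k-1}z}\le\norm{L^2(T)}{z}$ and the inverse inequality $\norm{L^2(T)}{\operatorname{div}\tau}\lesssim h_T^{-1}\norm{L^2(T;\Real^2)}{\tau}$. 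The boundary term is treated with the discrete trace inequality $\norm{L^2(\partial T)}{\tau\cdot n_T}\lesssim h_T^{-1/2}\norm{L^2(T;\Real^2)}{\tau}$. Together these yield
\[
\norm{L^2(T;\Real^2)}{G_T\dofmap_T z}\lesssim h_T^{-1}\norm{L^2(T)}{z}+h_T^{-1/2}\norm{L^2(\partial T)}{\trrec{\partial T}\dofmap_T z}.
\]

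The main obstacle is the boundary term, i.e.\ bounding $\norm{L^2(E)}{\trrec{E}\dofmap_E z}$ for each $E\in\ET$. First, by equivalence of norms on $\Poly{k+1}(E)$ (scaled to $E$, whose length is comparable to $h_T$ by mesh regularity), we have $\norm{L^2(E)}{\trrec{E}\dofmap_E z}\lesssim \norm{L^2(E)}{\lproj{E}{k-1}z}+h_T^{1/2}\max_{V\in\VE}|z(x_V)|$, and the projection is bounded by $\norm{L^2(E)}{z}$. Second, the vertex values are controlled with the Sobolev embedding $H^2\subset C^0$ in 2D, scaled on $T$; this is where $k+2\ge2$ is needed. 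It gives $|z(x_V)|\lesssim h_T^{-1}\norm{L^2(T)}{z}+\seminorm{H^1(T)}{z}+h_T\seminorm{H^2(T)}{z}$, using a scaled embedding on a regular triangle of $\Sh$ containing $x_V$. Third, the continuous trace inequality $\norm{L^2(E)}{z}\lesssim h_T^{-1/2}\norm{L^2(T)}{z}+h_T^{1/2}\seminorm{H^1(T)}{z}$ handles the remaining edge term. Combining these bounds with the approximation estimates for $z=v-w$ gives $h_T^{-1/2}\norm{L^2(\partial T)}{\trrec{\partial T}\dofmap_T z}\lesssim h_T^{k+1}\seminorm{H^{k+2}(T)}{v}$. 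Likewise $h_T^{-1}\norm{L^2(T)}{z}\lesssim h_T^{k+1}\seminorm{H^{k+2}(T)}{v}$. Multiplying by $h_T^{-m}$ from the inverse inequality then concludes \eqref{eq:application:GT:approximation}.
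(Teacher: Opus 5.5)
Your proof is correct, but it follows a different route from the paper. The paper gives no argument of its own: it specializes Point~2 of \cite[Proposition 7]{Di-Pietro.Droniou.ea:25}, a general approximation result for the discrete exterior derivative of the DDR complex of differential forms, to $n=2$ and $0$-forms.

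You instead give a self-contained Bramble--Hilbert argument tailored to the nodal space. Polynomial consistency \eqref{eq:application:GT.RaT:polynomial.consistency} reduces everything to bounding $G_T\dofmap_T z$ for $z=v-w$. The bound $\norm{L^2(T;\Real^2)}{G_T\dofmap_T z}\lesssim h_T^{-1}\norm{L^2(T)}{z}+\seminorm{H^1(T)}{z}+h_T\seminorm{H^2(T)}{z}$ then comes from testing \eqref{eq:def.GT} with $\tau=G_T\dofmap_T z$, together with discrete inverse and trace inequalities, the edge equivalence \eqref{eq:estim.poly.edge}, a continuous trace inequality, and the scaled embedding $H^2\hookrightarrow C^0$ for the vertex values. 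All of these steps check out. You also use two facts tacitly: the number of edges of $T$ is uniformly bounded, and the sub-triangles of $\Sh$ contained in $T$ have diameter comparable to $h_T$. Both follow from the mesh regularity assumptions.

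Your argument is the same pattern the paper itself uses for $\Pi_T^a$ in Lemma~\ref{lem:approx.PiTa}: projector property plus an $H^2$-type boundedness, then \cite[Lemma 1.43]{Di-Pietro.Droniou:20}. Its merit is that it shows exactly which ingredients are needed and where regularity beyond $H^1$ enters, namely the point values at the vertices.

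The citation buys brevity and generality instead. The cited result covers all form degrees and space dimensions. There, the hierarchical construction of the discrete operators on sub-cells of every dimension would make a direct argument like yours much heavier to write out.
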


\begin{proof}
  This results corresponds to Point 2.~in \cite[Proposition 7]{Di-Pietro.Droniou.ea:25} for $n = 2$ applied to $0$-forms.
\end{proof}
 
\begin{lemma}[Approximation properties of $\Pi_T^a$]\label{lem:approx.PiTa}
  For $T \in \Th$,
  all $s \in \{2, \ldots, k+2\}$,
  all $m \in \{0, \ldots, s\}$,
  and all $v \in H^{k+2}(T)$,
  \begin{equation}\label{eq:application:PiTa:approximation}
    \seminorm{H^m(T)}{v - \Pi_T^a v}
    \lesssim h_T^{s-m} \seminorm{H^s(T)}{v}.
  \end{equation}
\end{lemma}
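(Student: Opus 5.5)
The plan is the standard argument for polynomial-preserving projectors: combine a local boundedness estimate for $\Pi_T^a$ with polynomial invariance \eqref{eq:application:GT.RaT:polynomial.consistency}, and conclude via Bramble--Hilbert/Dupont--Scott approximation on the star-shaped element $T$.

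Fix $v\in H^{k+2}(T)$ and let $q\in\Poly{s-1}(T)\subset\Poly{k+1}(T)$ be the averaged Taylor polynomial of $v$ of degree $s-1$ (relative to the ball of radius $\varrho h_T$). Dupont--Scott gives
\[
\seminorm{H^r(T)}{v-q}\lesssim h_T^{s-r}\seminorm{H^s(T)}{v}\qquad\forall r\in\{0,\ldots,s\}.
\]
Since $\Pi_T^a q=q$, we write
\[
v-\Pi_T^a v=(v-q)-\Pi_T^a(v-q),
\]
so it suffices to bound $\seminorm{H^m(T)}{\Pi_T^a(v-q)}$ by $h_T^{s-m}\seminorm{H^s(T)}{v}$.

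The core step is a boundedness estimate for $\Pi_T^a$. Set $z\coloneq v-q$ and $p\coloneq\Pi_T^a z\in\Poly{k+1}(T)$. By inverse inequalities on polynomials (valid under the mesh regularity), $\seminorm{H^m(T)}{p}\lesssim h_T^{1-m}\norm{L^2(T;\Real^2)}{\nabla p}$ for $m\ge1$. For $m=0$, decompose $p$ into its mean plus the remainder and use Poincaré on the remainder; this requires an additional bound on the mean of $p$. I will therefore show
\[
\norm{L^2(T;\Real^2)}{\nabla p}+h_T^{-1}\norm{L^2(T)}{p}\lesssim h_T^{-1}\norm{L^2(T)}{z}+\seminorm{H^1(T)}{z}+h_T\seminorm{H^2(T)}{z}.
\]
Inserting the Taylor estimates with $r=0,1,2$ (admissible since $s\ge2$) makes the right-hand side $\lesssim h_T^{s-1}\seminorm{H^s(T)}{v}$, which yields the claim after multiplying by $h_T^{1-m}$.

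To prove this bound I work from the definitions \eqref{eq:def.GT}--\eqref{eq:application:RTa} applied to $\dofmap_T z$. Taking $\tau=G_T\dofmap_T z$ in \eqref{eq:def.GT}, together with discrete trace and inverse inequalities, gives
\[
\norm{}{G_T\dofmap_T z}\lesssim h_T^{-1}\norm{L^2(T)}{\lproj{T}{k-1}z}+h_T^{-1/2}\norm{L^2(\partial T)}{\trrec{\partial T}\dofmap_T z}.
\]
On each edge, $\trrec{E}$ is determined by $\lproj{E}{k-1}z$ and by the vertex values. A scaling argument on $\Poly{k+1}(E)$ gives
\[
\norm{L^2(E)}{\trrec{E}\dofmap_E z}\lesssim\norm{L^2(E)}{z}+h_E^{1/2}\max_{V\in\VE}|z(x_V)|.
\]
Vertex values are controlled in two dimensions by the Sobolev embedding $H^2\hookrightarrow C^0$ in scaled form, and edge $L^2$ norms by the continuous trace inequality $\norm{L^2(\partial T)}{z}^2\lesssim h_T^{-1}\norm{L^2(T)}{z}^2+h_T\seminorm{H^1(T)}{z}^2$ (Appendix~\ref{app:scaled.trace}). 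The same ingredients, applied to \eqref{eq:application:RTa} with $\tau$ chosen so that $\operatorname{div}\tau=p$ and using $\norm{L^2(T)}{\tau}\lesssim h_T\norm{L^2(T)}{p}$ (the scaled inverse of the divergence on $(x-x_T)\Poly{k+1}(T)$), bound $\norm{L^2(T)}{p}$. Combining these with an inverse inequality controls $\nabla p$.

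I expect the main obstacle to be uniformity of the constants on general polygons. This covers the scaled trace and Sobolev/vertex estimates on a star-shaped $T$, and the bounded right inverse of $\operatorname{div}$ on $(x-x_T)\Poly{k+1}(T)$ with the correct $h_T$ scaling. I would handle these by transporting to $\hat T=h_T^{-1}(T-x_T)$, which lies in a compact family of shapes determined by $\varrho$, or alternatively by working on the regular submesh $\Sh$. An alternative is to cite the analogous DDR result \cite[Proposition 7]{Di-Pietro.Droniou.ea:25} for the potential reconstruction, as done for Lemma~\ref{lem:approx.GT}.
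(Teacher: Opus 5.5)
Your proposal is correct and follows the paper's route. Both arguments combine polynomial invariance of $\Pi_T^a$ with a local boundedness estimate and conclude by the averaged-Taylor-polynomial argument that the paper packages as \cite[Lemma 1.43]{Di-Pietro.Droniou:20}. The difference is that you prove the boundedness directly from \eqref{eq:def.GT}--\eqref{eq:application:RTa}, with the same tools as Proposition~\ref{prop:bound.proj}, instead of adapting \cite[Proposition 13]{Di-Pietro.Droniou.ea:24}. One small slip: Appendix~\ref{app:scaled.trace} concerns dual-norm normal traces of $H_{\rm div}$ fields. It does not give the inequality $\norm{L^2(\partial T)}{z}^2\lesssim h_T^{-1}\norm{L^2(T)}{z}^2+h_T\seminorm{H^1(T)}{z}^2$ you need, which is a standard continuous trace inequality and should be cited as such.
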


\begin{proof}
  As already noticed, $\Pi_T^a$ is a projector.
  A straightforward adaptation of the arguments of \cite[Proposition 13]{Di-Pietro.Droniou.ea:24} to the enhanced potential reconstruction of \cite[Remark 3.7]{Bonaldi.Di-Pietro.ea:25} additionally shows that, for any $w \in H^2(T)$, $\seminorm{H^{\min(m,2)}(T)}{\Pi_T^a w} \lesssim \sum_{r = \min(m,2)}^2 h_T^r \seminorm{H^r(T)}{w}$.
  The approximation properties \eqref{eq:application:PiTa:approximation} then follow from an application of \cite[Lemma 1.43]{Di-Pietro.Droniou:20}.
\end{proof}

The following lemma highlights the strong links between the reconstruction operators built from the DOFs (in line with the fully discrete approach) and projection operators on the virtual space.

\begin{lemma}[Interpretation of gradient and potential reconstructions in virtual space]\label{lem:GT.RT.virtual}
  For all $T\in\Th$ and all $\virtual{v}_T\in\virtual{V}_T$, we have
  \begin{equation}\label{eq:GT.RT.virtual}
    \text{%
      $\trrec{\partial T}(\dofmap_T\virtual{v}_T)=(\virtual{v}_T)_{|\partial T}$,
      $G_T(\dofmap_T\virtual{v}_T)=\lproj{T}{k}(\nabla \virtual{v}_T)$,
      and $\Pi_T^a\virtual{v}_T=\RTa(\dofmap_T\virtual{v}_T)=\lproj{T}{k+1}\virtual{v}_T$.
    }
  \end{equation}
  As a consequence,
  \begin{equation}\label{eq:RTL=pikT.RaT}
    \RTL=\lproj{T}{k}\circ \RTa.
  \end{equation}
\end{lemma}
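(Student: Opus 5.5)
We prove the three identities in \eqref{eq:GT.RT.virtual} in order, each time comparing the defining relation of the discrete reconstruction with an integration by parts satisfied by $\virtual{v}_T\in\virtual{V}_T$. We then deduce \eqref{eq:RTL=pikT.RaT} from the definition \eqref{eq:link:vem.fd.R}.

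\emph{Trace.} By \eqref{eq:application:def.VT}, $(\virtual{v}_T)_{|E}\in\Poly{k+1}(E)$ for every $E\in\ET$. This polynomial has the $\Poly{k-1}(E)$-moments $\lproj{E}{k-1}\virtual{v}_T$ and the vertex values $\virtual{v}_T(x_V)$, which are exactly the components of $\dofmap_T\virtual{v}_T$. Since these moments and values determine a unique element of $\Poly{k+1}(E)$, we get $\trrec{E}(\dofmap_E\virtual{v}_T)=(\virtual{v}_T)_{|E}$, and hence $\trrec{\partial T}(\dofmap_T\virtual{v}_T)=(\virtual{v}_T)_{|\partial T}$.

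\emph{Gradient.} Fix $\tau\in\Poly{k}(T;\Real^2)$. Integration by parts gives $\int_T\nabla\virtual{v}_T\cdot\tau=-\int_T\virtual{v}_T\operatorname{div}\tau+\int_{\partial T}\virtual{v}_T\,(\tau\cdot n_T)$. Since $\operatorname{div}\tau\in\Poly{k-1}(T)$, we may replace $\virtual{v}_T$ by $\lproj{T}{k-1}\virtual{v}_T=v_T$ in the first term. By the trace identity, the boundary term equals $\int_{\partial T}\trrec{\partial T}(\dofmap_T\virtual{v}_T)\,(\tau\cdot n_T)$. Comparing with \eqref{eq:def.GT} yields $\int_T G_T(\dofmap_T\virtual{v}_T)\cdot\tau=\int_T\nabla\virtual{v}_T\cdot\tau$ for all such $\tau$, that is, $G_T(\dofmap_T\virtual{v}_T)=\lproj{T}{k}\nabla\virtual{v}_T$.

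\emph{Potential.} This is the main step, and it is where the enhancement constraint in \eqref{eq:application:def.VT} is needed. Since $\operatorname{div}:(x-x_T)\Poly{k+1}(T)\to\Poly{k+1}(T)$ is an isomorphism, it suffices to show that, for all $\tau\in(x-x_T)\Poly{k+1}(T)$,
\[
\int_T\virtual{v}_T\operatorname{div}\tau=-\int_T G_T(\dofmap_T\virtual{v}_T)\cdot\tau+\int_{\partial T}\trrec{\partial T}(\dofmap_T\virtual{v}_T)\,(\tau\cdot n_T).
\]
Indeed, this says that $\lproj{T}{k+1}\virtual{v}_T$ satisfies \eqref{eq:application:RTa}, which characterizes $\RTa(\dofmap_T\virtual{v}_T)$. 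Integration by parts gives the same identity with $-\int_T\nabla\virtual{v}_T\cdot\tau$ in place of the gradient term. It therefore remains to check that $\int_T(\nabla\virtual{v}_T-\lproj{T}{k}\nabla\virtual{v}_T)\cdot(x-x_T)w=0$ for all $w\in\Poly{k+1}(T)$. Using $\Poly{k+1}(T)=\Poly{k-1}(T)\oplus\Poly{k-1|k+1}(T)$, we treat the two components separately. For $w\in\Poly{k-1}(T)$, the field $(x-x_T)w$ lies in $\Poly{k}(T;\Real^2)$, so the integral vanishes by definition of $\lproj{T}{k}$. For $w\in\Poly{k-1|k+1}(T)$, it vanishes by the constraint in \eqref{eq:application:def.VT}. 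The equality $\Pi_T^a\virtual{v}_T=\RTa(\dofmap_T\virtual{v}_T)$ is simply the definition \eqref{eq:application:PiTa}.

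\emph{Consequence.} Finally, $\RTL=\lproj{T}{k}\circ\dofmap_T^{-1}$, and for $\virtual{v}_T=\dofmap_T^{-1}\discrete{v}_T$ we have $\lproj{T}{k}\virtual{v}_T=\lproj{T}{k}\lproj{T}{k+1}\virtual{v}_T=\lproj{T}{k}\RTa\discrete{v}_T$, using the nesting of the $L^2$-orthogonal projectors. This proves \eqref{eq:RTL=pikT.RaT}.
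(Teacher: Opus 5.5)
Your proof is correct and follows essentially the same route as the paper: the trace identity comes from unisolvence of edge values and moments in $\Poly{k+1}(E)$, the gradient identity from an integration by parts using $\operatorname{div}\tau\in\Poly{k-1}(T)$, and the potential identity from splitting $w\in\Poly{k+1}(T)$ as $\Poly{k-1}(T)\oplus\Poly{k-1|k+1}(T)$, with $L^2$-orthogonality of $\lproj{T}{k}$ handling the first part and the enhancement constraint in $\virtual{V}_T$ the second. The only cosmetic difference is direction: you check that $\lproj{T}{k+1}\virtual{v}_T$ satisfies the defining relation \eqref{eq:application:RTa}, whereas the paper starts from that relation and arrives at $\int_T\virtual{v}_T\operatorname{div}\tau$, which is the same argument.
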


\begin{proof}
  By definition \eqref{eq:application:sigmah} of $\dofmap_h$, for all $E\in\ET$, $\trrec{E}(\dofmap_E\virtual{v}_T)$ is the unique polynomial function in $\Poly{k+1}(E)$ that has values $(\virtual{v}_T(x_V))_{V\in\VE}$ at the vertices of $E$ and whose projection on $\Poly{k-1}(E)$ is equal to $\pi^{k-1}_E\virtual{v}_T$; since $(\virtual{v}_T)_{|E}$ is also a polynomial function satisfying these properties, we infer that $\trrec{E}(\dofmap_E\virtual{v}_T)=(\virtual{v}_T)_{|E}$. This proves that $\trrec{\partial T}(\dofmap_T\virtual{v}_T)=(\virtual{v}_T)_{|\partial T}$.

  Using this fact in the definition \eqref{eq:def.GT} of $G_T$, we see that, for all $\tau\in\Poly{k}(T;\Real^2)$,
  \[
  \int_T G_T(\dofmap_T\virtual{v}_T) \cdot \tau = -\int_T (\cancel{\pi^{k-1}_T}\virtual{v}_T)\operatorname{div} \tau + \int_{\partial T}\virtual{v}_T(\tau\cdot n_T)\overset{\text{IBP}}=\int_T \nabla\virtual{v}_T\cdot\tau,
  \]
  where the cancellation is justified noticing that $\operatorname{div} \tau\in\Poly{k-1}(T)$. This concludes the proof of $G_T(\dofmap_T\virtual{v}_T)=\lproj{T}{k}(\nabla \virtual{v}_T)$.

  Let now $\tau\in (x-x_T)\Poly{k+1}(T)$, recall that $\Pi_T^a=\RTa\circ \dofmap_T$, and invoke the definition \eqref{eq:application:RTa} of $\RTa$ with $\discrete{v}_T=\dofmap_T\virtual{v}_T$ together with \eqref{eq:application:PiTa} to get
  \begin{equation}\label{eq:add:X}
    \int_T \Pi_T^a \virtual{v}_T \operatorname{div} \tau = \int_T \RTa(\dofmap_T \virtual{v}_T) \operatorname{div} \tau= - \int_T \lproj{T}{k}(\nabla \virtual{v}_T) \cdot \tau
    + \int_{\partial T} \virtual{v}_T \, (\tau \cdot n_T).
  \end{equation}
  We can write $\tau=(x-x_T)(z+w)$ with $z\in\Poly{k-1}(T)$ and $w\in\Poly{k-1|k+1}(T)$ and thus, by the integral condition in the definition \eqref{eq:application:def.VT} of $\virtual{V}_T$, we continue with
  \begin{align*}
    \int_T \Pi_T^a \virtual{v}_T \operatorname{div} \tau &= - \int_T \cancel{\lproj{T}{k}}(\nabla \virtual{v}_T) \cdot (x-x_T)z
    - \int_T (\nabla \virtual{v}_T) \cdot (x-x_T)w + \int_{\partial T} \virtual{v}_T \, (\tau \cdot n_T)\\
    &= - \int_T (\nabla \virtual{v}_T) \cdot \tau + \int_{\partial T} \virtual{v}_T \, (\tau \cdot n_T)
    \overset{\text{IBP}}=\int_T \virtual{v}_T\operatorname{div}\tau,
  \end{align*}
  the cancellation being justified by $(x-x_T)z\in\Poly{k}(T;\Real^2)$.
  Since $\operatorname{div}\tau$ spans $\Poly{k+1}(T)$ when $\tau$ spans $(x-x_T)\Poly{k+1}(T)$, this concludes the proof that $\Pi_T^a \virtual{v}_T =\RTa(\dofmap_T\virtual{v}_T)=\lproj{T}{k+1}\virtual{v}_T$.

  Since $\lproj{T}{k}\circ\lproj{T}{k+1}=\lproj{T}{k}$ and $\Pi_T^L=\lproj{T}{k}$, we infer that $\Pi_T^L\virtual{v}_T=\lproj{T}{k}(\Pi_T^a\virtual{v}_T)$. Applying this to $\virtual{v}_T=\dofmap_T^{-1}\discrete{v}_T$ for a generic $\discrete{v}_T\in\discrete{V}_T$ and recalling that, for $\bullet\in\{a,L\}$, we have $R^\bullet_T=\Pi^\bullet_T\circ\dofmap_T^{-1}$ proves $\RTL=\lproj{T}{k}\circ \RTa$.
\end{proof}

The following proposition estimates the norms of the computable projections in terms of the DOFs.
  \begin{proposition}[Boundedness of the projections]\label{prop:bound.proj}
    It holds, for all $T \in \Th$ and all $\virtual{v}_T \in \virtual{V}_T$,
    \begin{multline}\label{eq:cont-dof.proj}
      \norm{L^2(T)}{\lproj{T}{k+1} \virtual{v}_T}
      + h_T \norm{L^2(T;\Real^2)}{\lproj{T}{k}(\nabla \virtual{v}_T)}
      \\
      \lesssim
      \norm{L^2(T)}{\lproj{T}{k-1}\virtual{v}_T}
      + h_T^{\frac12} \sum_{E \in \ET} \norm{L^2(E)}{\lproj{E}{k-1}\virtual{v}_T}
      + h_T \sum_{V \in \VT} |\virtual{v}_T(x_V)|.
    \end{multline}
  \end{proposition}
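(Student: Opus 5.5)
By Lemma \ref{lem:GT.RT.virtual}, $\lproj{T}{k}(\nabla\virtual{v}_T)=G_T(\dofmap_T\virtual{v}_T)$ and $\lproj{T}{k+1}\virtual{v}_T=\RTa(\dofmap_T\virtual{v}_T)$. The estimate is therefore a purely discrete one: we must bound $G_T\discrete{v}_T$ and $\RTa\discrete{v}_T$ by the DOFs, taking $\discrete{v}_T=\dofmap_T\virtual{v}_T$. Note that $v_T=\lproj{T}{k-1}\virtual{v}_T$, $v_E=\lproj{E}{k-1}\virtual{v}_T$ and $v_V=\virtual{v}_T(x_V)$. The plan is to work only on polynomials, using the discrete inverse and trace inequalities available on regular (star-shaped) elements together with the definitions \eqref{eq:def.GT} and \eqref{eq:application:RTa}.

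\emph{Step 1: edge trace.} First we bound $\norm{L^2(E)}{\trrec{E}\discrete{v}_E}\lesssim \norm{L^2(E)}{v_E}+h_T^{1/2}\sum_{V\in\VE}|v_V|$. The map $\discrete{v}_E\mapsto\trrec{E}\discrete{v}_E$ is a linear isomorphism on a finite-dimensional space. On the reference segment $[0,1]$, the norms $\norm{L^2}{\lproj{}{k-1}p}+|p(0)|+|p(1)|$ and $\norm{L^2}{p}$ are equivalent on $\Poly{k+1}$ by unisolvence. Scaling to $E$ (length $\simeq h_T$) gives the $h_E^{1/2}$ weight on the vertex values.

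\emph{Step 2: gradient.} Take $\tau=G_T\discrete{v}_T$ in \eqref{eq:def.GT}. We apply Cauchy--Schwarz, the inverse inequality $\norm{L^2(T)}{\operatorname{div}\tau}\lesssim h_T^{-1}\norm{L^2(T)}{\tau}$, and the discrete trace inequality $\norm{L^2(E)}{\tau}\lesssim h_T^{-1/2}\norm{L^2(T)}{\tau}$. This yields
\[
\norm{L^2(T;\Real^2)}{G_T\discrete{v}_T}\lesssim h_T^{-1}\norm{L^2(T)}{v_T}+h_T^{-1/2}\sum_{E\in\ET}\norm{L^2(E)}{\trrec{E}\discrete{v}_E}.
\]
Multiplying by $h_T$ and using Step 1 gives the gradient part of \eqref{eq:cont-dof.proj}. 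The number of edges is bounded thanks to mesh regularity, since edges have length $\ge\varrho h_T$.

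\emph{Step 3: potential.} Let $r=\RTa\discrete{v}_T\in\Poly{k+1}(T)$. We choose $\tau\in(x-x_T)\Poly{k+1}(T)$ with $\operatorname{div}\tau=r$, which exists by the isomorphism noted after \eqref{eq:application:RTa}. The key point is the uniform bound $\norm{L^2(T)}{\tau}\lesssim h_T\norm{L^2(T)}{r}$. This follows by scaling: map $T$ to $\hat T=h_T^{-1}(T-x_T)$, where $\tau$ behaves like $h_T\hat\tau$. The constant of the inverse of $\operatorname{div}$ on $\hat T$ must then be uniform over all admissible $\hat T$. I would justify this by norm equivalence between $L^2(\hat T)$ and $L^2(B)$ on polynomials, where $B$ is the inscribed ball of radius $\varrho$. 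This is fine because $\hat T$ is contained in the ball of radius $1$ and contains $B$, so everything reduces to a fixed reference configuration.

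Plugging this $\tau$ into \eqref{eq:application:RTa} and using Cauchy--Schwarz and the discrete trace inequality gives
\[
\norm{L^2(T)}{r}^2\lesssim \big(h_T\norm{L^2(T;\Real^2)}{G_T\discrete{v}_T}+h_T^{1/2}\textstyle\sum_E\norm{L^2(E)}{\trrec{E}\discrete{v}_E}\big)\norm{L^2(T)}{r}.
\]
Combining with Steps 1--2 concludes the proof. I expect the main obstacle to be Step 3, namely the $h$-uniform right inverse of $\operatorname{div}$ on $(x-x_T)\Poly{k+1}(T)$, together with keeping all polynomial inverse and trace constants dependent only on $\varrho$ and $k$. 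If needed, I would instead invoke the known boundedness results from the DDR literature, e.g. \cite{Di-Pietro.Droniou.ea:24}.
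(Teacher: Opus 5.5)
Your proposal is correct and follows essentially the paper's proof: the gradient bound comes from testing the definition of $G_T$ with $\tau=G_T(\dofmap_T\virtual{v}_T)$ (equivalently, integrating by parts $\int_T \nabla\virtual{v}_T\cdot\lproj{T}{k}(\nabla\virtual{v}_T)$), combined with inverse and trace inequalities and \eqref{eq:estim.poly.edge}. The potential bound comes from testing \eqref{eq:application:RTa} with some $\tau\in(x-x_T)\Poly{k+1}(T)$ such that $\operatorname{div}\tau=\lproj{T}{k+1}\virtual{v}_T$. The only difference is how you justify the uniform bound $\norm{L^2(T;\Real^2)}{\tau}\lesssim h_T\norm{L^2(T)}{\lproj{T}{k+1}\virtual{v}_T}$: you use a self-contained scaling argument, whereas the paper cites \cite[Lemma 9]{Di-Pietro.Droniou:23}. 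Your argument is sound, because the inverse of $\operatorname{div}$ on $(x-x_T)\Poly{k+1}(T)$ is a domain-independent algebraic map and $B(0,\varrho)\subset\hat T\subset B(0,1)$.
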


\begin{proof}
  The definition of $L^2$-orthogonal projector and an integration by parts (or, equivalently, exploiting the relation $G_T(\dofmap_T\virtual{v}_T)=\lproj{k}{T}(\nabla\virtual{v}_T)$) yields
  \[
  \begin{aligned}
    \norm{L^2(T)}{\lproj{T}{k}(\nabla \virtual{v}_T)}^2
    & = \int_T \nabla \virtual{v}_T \cdot \lproj{T}{k}(\nabla \virtual{v}_T)
    \overset{\text{IBP}}= - \int_T \virtual{v}_T \operatorname{div}(\lproj{T}{k}(\nabla \virtual{v}_T)) + \int_{\partial T} \virtual{v}_T \, \lproj{T}{k}(\nabla \virtual{v}_T) \cdot n_T \\
    & = - \int_T \lproj{T}{k-1} \virtual{v}_T \, \operatorname{div}(\lproj{T}{k}(\nabla \virtual{v}_T)) + \int_{\partial T} \virtual{v}_T \, \lproj{T}{k}(\nabla \virtual{v}_T) \cdot n_T,
  \end{aligned}
  \]
  where the introduction of $\lproj{T}{k-1}$ is justified since $ \operatorname{div}(\lproj{T}{k}(\nabla \virtual{v}_T))\in\Poly{k-1}(T)$.
  Using Cauchy--Schwarz and standard inverse and trace inequalities for polynomial functions together with
  \begin{equation}\label{eq:estim.poly.edge}
    \norm{L^2(E)}{v}^2 \simeq \norm{L^2(E)}{\lproj{E}{k-1} v}^2 + h_T \sum_{V \in \VE} |v(x_V)|^2\qquad\forall E\in\Eh\,,\quad \forall v \in \Poly{k+1}(E),
  \end{equation}
  we infer that
  \begin{equation}\label{eq:cont-dof.grad}
    \norm{L^2(T;\Real^2)}{\lproj{T}{k}(\nabla \virtual{v}_T)}^2
    \lesssim
    h_T^{-2} \norm{L^2(T)}{\lproj{T}{k-1}\virtual{v}_T}^2
    + h_T^{-1} \sum_{E \in \ET} \norm{L^2(E)}{\lproj{E}{k-1}\virtual{v}_T}^2
    +  \sum_{V \in \VT} |\virtual{v}_T(x_V)|^2,
  \end{equation}
  from which we can easily deduce the sought estimate for the second term in the left-hand side of \eqref{eq:cont-dof.proj}.
  \smallskip
  
  To estimate the first term, we start by introducing
  $\tau \in (x-x_T)\Poly{k+1}(T)$ such that $\operatorname{div}\tau = \lproj{T}{k+1} \virtual{v}_T$ and satisfying
  $
  \norm{L^2(T;\Real^2)}{\tau}
  + h_T^{\frac12} \norm{L^2(\partial T;\Real^2)}{\tau} \lesssim h_T \norm{L^2(T)}{\lproj{T}{k+1} \virtual{v}_T} .
  $
  The existence of such $\tau$ follows from the surjectivity of the involved operator and polynomial scaling arguments \cite[Lemma 9]{Di-Pietro.Droniou:23}.
  Recalling the relations in Lemma \ref{lem:GT.RT.virtual}, we can apply \eqref{eq:add:X} for such $\tau$, yielding
  \[
  \norm{L^2(T)}{\lproj{T}{k+1} \virtual{v}_T}^2 = \int_T (\lproj{T}{k+1} \virtual{v}_T) \, \operatorname{div} \tau
  = - \int_T \lproj{T}{k}(\nabla \virtual{v}_T) \cdot \tau
  + \int_{\partial T} \virtual{v}_T \, (\tau \cdot n_T).
  \]
  We conclude using a Cauchy--Schwarz inequality, \eqref{eq:cont-dof.grad}, \eqref{eq:estim.poly.edge}, and the above bounds for $\tau$.
\end{proof}

\subsubsection{Discrete forms}

The bilinear form $\discrete{a}_h$ is obtained according to \eqref{eq:discrete:ah} letting, for all $T \in \Th$, $\discrete{a}_T : \discrete{V}_T \times \discrete{V}_T \to \Real$ be such that, for all $(\discrete{w}_T, \discrete{v}_T) \in \discrete{V}_T \times \discrete{V}_T$,
\begin{equation}\label{eq:application:aT}
  \discrete{a}_T(\discrete{w}_T, \discrete{v}_T)
  \coloneqq \int_T G_T \discrete{w}_T \cdot G_T \discrete{v}_T +
  \discrete{s}_T(\discrete{w}_T - \dofmap_T \RTa \discrete{w}_T, \discrete{v}_T - \dofmap_T \RTa \discrete{v}_T),
\end{equation}
with $\discrete{s}_T : \discrete{V}_T \times \discrete{V}_T \to \Real$ such that
\begin{equation}\label{eq:application:sT}
  \discrete{s}_T(\discrete{w}_T, \discrete{v}_T)
  = h_T^{-1} \sum_{E \in \ET} \int_E w_E \, v_E
  + \sum_{V \in \VT} w_V \, v_V .
\end{equation}
In the virtual setting, following the approach \eqref{eq:atilde:modetwo}, the forms above can be equivalently written as (see Lemma \ref{lem:GT.RT.virtual})
\begin{equation}\label{eq:application:aT:cont}
  \virtual{a}_T(\virtual{w}_T, \virtual{v}_T)
  \coloneqq \int_T \lproj{T}{k} (\nabla \virtual{w}_T) \cdot \lproj{T}{k} (\nabla \virtual{v}_T) +
  \virtual{s}_T(\virtual{w}_T - \lproj{T}{k+1} \virtual{w}_T, \virtual{v}_T - \lproj{T}{k+1} \virtual{v}_T),
\end{equation}
with $\virtual{s}_T : \virtual{V}_T \times \virtual{V}_T \to \Real$ such that
\begin{equation}\label{eq:application:sT:cont}
  \virtual{s}_T(\virtual{w}_T, \virtual{v}_T)
  =  h_T^{-1} \sum_{E \in \ET} \int_E (\lproj{E}{k-1} \virtual{w}_T) \, (\lproj{E}{k-1} \virtual{v}_T)
  + \sum_{V \in \VT} \virtual{w}_T(x_V) \, \virtual{v}_T(x_V) .
\end{equation}
Since we have chosen $\mathcal{P}_T=\Poly{k+1}(T)$, the bilinear form \eqref{eq:application:aT:cont} obviously satisfies the consistency property \eqref{eq:virtual:consistency}.
Note that there are many other possible choices in the VEM literature for the stabilization form, depending, for instance, on the considered mesh assumptions or other considerations of more practical nature \cite{Beirao-da-Veiga.Brezzi.ea:14,Beirao-da-Veiga.Lovadina.ea:17,Brenner.Sung:18,Wriggers.Aldakheel.ea:24}.

\begin{remark}[Equivalent stabilization bilinear form]\label{rem:stab:equiv}
  Exploiting the fact that the restrictions of the virtual functions to the mesh edges are polynomials of degree $\le k + 1$ and recalling \eqref{eq:estim.poly.edge},
  it is trivial to check that the form $\virtual{s}_T$ above can be equivalently substituted by
  \[
  \tilde{\virtual{s}}_T(\virtual{w}_T,\virtual{v}_T) = h_T^{-1} \int_{\partial T} \virtual{w}_T \, \virtual{v}_T \, ,
  \]
  which is classical in the VEM literature.
  Indeed, the two forms are equivalent in the sense that
  \begin{equation}\label{eq:application:sT:cont-bis}
    \virtual{s}_T(\virtual{w}_T, \virtual{w}_T)
    \simeq \tilde{\virtual{s}}_T(\virtual{w}_T,\virtual{w}_T)
    =  h_T^{-1} \norm{L^2(\partial T)}{\virtual{w}_T}^2 \qquad \forall \virtual{w}_T \in \virtual{V}_T.
  \end{equation}
\end{remark}

\begin{proposition}[Consistency of $\discrete{s}_T$]
  For all $T \in \Th$ and all $v \in H^{k+2}(T)$, it holds
  \begin{equation}\label{eq:application:sT:consistency}
    \discrete{s}_T(\dofmap_T (v - \Pi_T^a v), \dofmap_T (v - \Pi_T^a v))^{\frac12}
    \lesssim h_T^{k+1} \seminorm{H^{k+2}(T)}{v}.
  \end{equation}
\end{proposition}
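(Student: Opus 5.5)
Set $z \coloneqq v - \Pi_T^a v \in H^{k+2}(T)$. By the definition \eqref{eq:application:sT} of $\discrete{s}_T$ and the expression \eqref{eq:application:sigmah} of $\dofmap_T$, the left-hand side of \eqref{eq:application:sT:consistency} squared is
\[
\discrete{s}_T(\dofmap_T z,\dofmap_T z)
= h_T^{-1}\sum_{E\in\ET}\norm{L^2(E)}{\lproj{E}{k-1} z}^2 + \sum_{V\in\VT} |z(x_V)|^2 .
\]
The plan is to bound each of the two contributions by $h_T^{2(k+1)}\seminorm{H^{k+2}(T)}{v}^2$. We then conclude using the fact that the numbers of edges and vertices of $T$ are uniformly bounded under the mesh regularity assumptions of Section \ref{sec:mesh}.

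For the edge terms, we use the $L^2(E)$-boundedness of $\lproj{E}{k-1}$ to get $\norm{L^2(E)}{\lproj{E}{k-1} z}\le\norm{L^2(E)}{z}$. We then apply the continuous trace inequality on the regular element $T$, namely $\norm{L^2(\partial T)}{z}^2\lesssim h_T^{-1}\norm{L^2(T)}{z}^2 + h_T\seminorm{H^1(T)}{z}^2$. This yields
\[
h_T^{-1}\sum_{E\in\ET}\norm{L^2(E)}{\lproj{E}{k-1} z}^2\lesssim h_T^{-2}\norm{L^2(T)}{z}^2+\seminorm{H^1(T)}{z}^2 .
\]
Lemma \ref{lem:approx.PiTa} with $s=k+2$ and $m\in\{0,1\}$ bounds the right-hand side by $h_T^{2(k+1)}\seminorm{H^{k+2}(T)}{v}^2$.

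The vertex terms are the main obstacle, since point values are not controlled by $H^1$ in two dimensions. We use instead the scaled Sobolev embedding $H^2(T)\hookrightarrow C(\overline{T})$ on star-shaped elements, obtained by a scaling argument on the ball with respect to which $T$ is star-shaped, or from the Sobolev embedding on the regular submesh $\Sh$:
\[
\norm{L^\infty(T)}{z}\lesssim h_T^{-1}\norm{L^2(T)}{z}+\seminorm{H^1(T)}{z}+h_T\seminorm{H^2(T)}{z}.
\]
Lemma \ref{lem:approx.PiTa} with $s=k+2$ and $m\in\{0,1,2\}$ (admissible since $k\ge0$, so $2\le k+2$) bounds each term on the right by $h_T^{k+1}\seminorm{H^{k+2}(T)}{v}$. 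Hence $|z(x_V)|\lesssim h_T^{k+1}\seminorm{H^{k+2}(T)}{v}$ for every $V\in\VT$.

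Summing the two estimates and taking the square root gives \eqref{eq:application:sT:consistency}.
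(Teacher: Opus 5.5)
Your proof is correct and follows the paper's argument. The paper also removes $\lproj{E}{k-1}$ using its $L^2(E)$-boundedness. It then bounds the edge and vertex contributions together, via what it calls continuous trace inequalities, by $h_T^{-2}\norm{L^2(T)}{z}^2+\seminorm{H^1(T)}{z}^2+h_T^2\seminorm{H^2(T)}{z}^2$ with $z=v-\Pi_T^a v$; your scaled embedding $H^2(T)\hookrightarrow C(\overline{T})$ makes the vertex part of that step explicit. It concludes, as you do, with Lemma~\ref{lem:approx.PiTa} for $s=k+2$ and $m\in\{0,1,2\}$.
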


\begin{proof}
  We have
  \begin{equation*} 
    \begin{aligned}
      \discrete{s}_T(\dofmap_T {}&(v - \Pi_T^a v), \dofmap_T (v - \Pi_T^a v))
      \\
      &\begin{aligned}[t]
         \overset{\eqref{eq:application:sT},\eqref{eq:application:sigmah}}&=
         h_T^{-1} \sum_{E \in \ET} \norm{L^2(E)}{\lproj{E}{k-1} (v - \Pi_T^a v)}^2
         + \sum_{V \in \VT} |(v - \Pi_T^a v)(x_V)|^2
         \\
         &\le
         h_T^{-1} \sum_{E \in \ET} \norm{L^2(E)}{v - \Pi_T^a v}^2
         + \sum_{V \in \VT} |(v - \Pi_T^a v)(x_V)|^2
         \\
         &\lesssim
         h_T^{-2} \norm{L^2(T)}{v - \Pi_T^a v}^2
         + \seminorm{H^1(T)}{v - \Pi_T^a v}^2
         + h_T^2 \seminorm{H^2(T)}{v - \Pi_T^a v}^2,
       \end{aligned}
    \end{aligned}
  \end{equation*}
  where we have used the $L^2(E)$-boundedness of $\lproj{E}{k-1}$ in the second step and continuous trace inequalities in the third step.
  Use \eqref{eq:application:PiTa:approximation} with $s = k + 2$ and $m \in \{0, 1, 2\}$ to estimate the terms in the right-hand side and conclude.
\end{proof}

The loading term is discretized by the linear form $\ell_h:\discrete{V}_h\to\Real$ such that
\begin{equation}\label{eq:application:ell.h:RTL:discrete}
  \ell_h(\discrete{v}_h) = \sum_{T \in \Th} \int_T f \, \RTL \discrete{v}_T
  \qquad \forall \discrete{v}_h \in \discrete{V}_h.
\end{equation}
Its virtual counterpart, still denoted by $\ell_h$ with a little abuse of notation, is $\ell_h : \virtual{V}_h \to \Real$ such that
\begin{equation}\label{eq:application:ell.h:RTL:virtual}
  \ell_h(\virtual{v}_h) = \sum_{T \in \Th} \int_T f \, \lproj{T}{k}\virtual{v}_T
  \qquad \forall \virtual{v}_h \in \virtual{V}_h.
\end{equation}

\subsection{Fully discrete norm and discrete coercivity}

The fully discrete space $\discrete{V}_h$ is equipped with the norm such that, for all $\discrete{v}_h \in \discrete{V}_h$,
\begin{equation}\label{eq:application:discrete.norm}
  \begin{gathered}
    \norm{\discrete{V}_h}{\discrete{v}_h} = \left(
    \sum_{T \in \Th} \norm{\discrete{V}_T}{\discrete{v}_T}^2
    \right)^{\frac12},
    \\
    \norm{\discrete{V}_T}{\discrete{v}_T}^2
    \coloneqq
    \norm{L^2(T;\Real^2)}{G_T \discrete{v}_T}^2
    + h_T^{-1}\norm{L^2(\partial T)}{\RTa \discrete{v}_T - \trrec{\partial T} \discrete{v}_T}^2.
  \end{gathered}
\end{equation}
The following boundedness property for the potential reconstruction can be proved taking $\tau = \nabla \RTa \discrete{v}_T$ in \eqref{eq:application:RTa} (this is possible in view of Remark \ref{eq:application:RTa:validity}), integrating by parts the left-hand side, and using Cauchy--Schwarz and discrete trace inequalities to estimate the right-hand side:
\begin{equation}\label{eq:application:RTa:boundedness}
  \norm{L^2(T;\Real^2)}{\nabla \RTa \discrete{v}_T}
  \lesssim \norm{\discrete{V}_T}{\discrete{v}_T}
  \qquad \forall \discrete{v}_T \in \discrete{V}_T.
\end{equation}
Moreover, using \eqref{eq:estim.poly.edge} together with $\sigma_{\partial T}(\gamma_{\partial T}\discrete{v}_T)=\discrete{v}_{\partial T}$, it is a simple matter to prove that
\begin{equation}\label{eq:application:ah:coercivity.boundedness}
  \discrete{a}_T(\discrete{v}_T, \discrete{v}_T) \simeq \norm{\discrete{V}_T}{\discrete{v}_T}^2\qquad
  \forall T\in\Th,\quad \forall \discrete{v}_T\in\discrete{V}_T.
\end{equation}

\begin{remark}[DOF-based $H^1$-like norm]
The discrete norm \eqref{eq:application:discrete.norm} is an $H^1$-like (energy) norm associated to the bilinear form $\discrete{a}_h$ and thus built using the reconstruction operators $G_T$, $\RTa$ and $\gamma_{\partial T}$. One can also construct and $H^1$-like norm directly from the DOFs themselves, by setting: for all $\discrete{v}_h\in\discrete{V}_h$,
  \begin{equation}\label{eq:def.H1.norm}
  \begin{gathered}
      \norm{1,h}{\discrete{v}_h}\coloneq\left(\sum_{T\in\Th}\norm{1,T}{\discrete{v}_T}^2\right)^{\frac12},\\
      \norm{1,T}{\discrete{v}_T}\coloneq \left(\norm{L^2(T)}{\nabla v_T}^2+h_T^{-1}\sum_{E\in\ET}\norm{L^2(E)}{v_T-v_E}^2
      +\sum_{V\in\VT}|v_T(x_V)-v_V|^2\right).
  \end{gathered}
  \end{equation}
  Using discrete trace and inverse inequalities, together with \eqref{eq:estim.poly.edge}, and following similar arguments as in \cite[Proposition 2.13]{Di-Pietro.Droniou:20}, it can be shown that $\norm{1,h}{\cdot}\simeq\norm{\discrete{V}_h}{\cdot}$. 
  DOF-norms like \eqref{eq:def.H1.norm} are typically involved in the proof of the approximation properties of reconstruction operators (such as Lemmas \ref{lem:approx.GT} and \ref{lem:approx.PiTa}).
\end{remark}

\subsection{Convergence analysis based on virtual functions}\label{sec:nodal.virtual.analysis}

\subsubsection{Basic analysis}

In light of Theorem \ref{thm:vem:error.estimate}, the first and main step is showing the validity of the inf-sup condition \eqref{eq:virtual:inf-sup} and the local boundedness condition \eqref{eq:virtual:local-bound}. More precisely, the coercive nature of the present problem allows to show \eqref{eq:virtual:inf-sup} as a trivial consequence of the coercivity property
\begin{equation}\label{eq:virtual:coerc}
  \norm{V}{\virtual{w}_h}^2 \lesssim \virtual{a}_h(\virtual{w}_h,\virtual{w}_h) \qquad \forall \virtual{w}_h \in \virtual{V}_h.
\end{equation}
We therefore focus on \eqref{eq:virtual:coerc} and \eqref{eq:virtual:local-bound}.
\begin{proposition}[Properties of the virtual bilinear form]\label{prop:main-cont}
  The virtual bilinear form $\virtual{a}_h$ satisfies the coercivity property \eqref{eq:virtual:coerc} and local boundedness property \eqref{eq:virtual:local-bound}.
\end{proposition}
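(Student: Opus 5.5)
The plan is to reduce both properties to a single local norm equivalence on $\virtual{V}_T$:
\[
\virtual{a}_T(\virtual{v}_T,\virtual{v}_T)\simeq \norm{L^2(T;\Real^2)}{\nabla\virtual{v}_T}^2\qquad\forall \virtual{v}_T\in\virtual{V}_T,
\]
up to constants depending only on $\varrho$ and $k$.

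Given this equivalence, coercivity \eqref{eq:virtual:coerc} follows by summing over $T\in\Th$. Local boundedness \eqref{eq:virtual:local-bound} follows from the Cauchy--Schwarz inequality for the symmetric positive semidefinite form $\virtual{a}_T$, since $\virtual{a}_T(\virtual{w}_T,\virtual{v}_T)\le \virtual{a}_T(\virtual{w}_T,\virtual{w}_T)^{1/2}\virtual{a}_T(\virtual{v}_T,\virtual{v}_T)^{1/2}$. Because both sides vanish on constants, and $\lproj{T}{k+1}$ reproduces constants, I would first subtract the mean of $\virtual{v}_T$ (or its value at a vertex) and work with functions having zero average. By Remark \ref{rem:stab:equiv}, I can replace $\virtual{s}_T$ by $\tilde{\virtual{s}}_T$ throughout.

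\textbf{Upper bound.} The consistent term is bounded trivially by the $L^2$-stability of $\lproj{T}{k}$. For the stabilization, set $z=\virtual{v}_T-\lproj{T}{k+1}\virtual{v}_T$. The scaled trace inequality of Appendix \ref{app:scaled.trace} gives $h_T^{-1}\norm{L^2(\partial T)}{z}^2\lesssim h_T^{-2}\norm{L^2(T)}{z}^2+\norm{L^2(T)}{\nabla z}^2$. I then bound $\norm{L^2(T)}{z}\lesssim h_T\norm{L^2(T)}{\nabla\virtual{v}_T}$ by Poincaré (using $\lproj{T}{k+1}c=c$ for constants $c$), and bound $\norm{L^2(T)}{\nabla\lproj{T}{k+1}\virtual{v}_T}$ by $H^1$-stability of the $L^2$ projector on star-shaped elements.

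\textbf{Lower bound (main obstacle).} This is exactly the second inequality in \eqref{eq:E}: the $H^1$ seminorm of the virtual function must be controlled by computable quantities. Starting from $\norm{L^2(T)}{\nabla\virtual{v}_T}^2=\int_T\nabla\virtual{v}_T\cdot\nabla\virtual{v}_T$, I integrate by parts to get
\[
\norm{L^2(T)}{\nabla\virtual{v}_T}^2=-\int_T\Delta\virtual{v}_T\,\virtual{v}_T+\int_{\partial T}\partial_n\virtual{v}_T\,\virtual{v}_T .
\]
The boundary term involves the normal derivative, which is not controlled directly. I would therefore split $\virtual{v}_T=\virtual{v}_T^\partial+\virtual{v}_T^0$. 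Here $\virtual{v}_T^\partial$ solves a harmonic-type problem (with the polynomial Laplacian enhancement) with boundary data $(\virtual{v}_T)_{|\partial T}$, and satisfies $\norm{L^2(T)}{\nabla \virtual{v}_T^\partial}\lesssim \seminorm{H^{1/2}(\partial T)}{\virtual{v}_T}$. For piecewise polynomials on $\partial T$, an inverse estimate on the regular edges bounds this by $h_T^{-1/2}\norm{L^2(\partial T)}{\virtual{v}_T-c}$, which is controlled by $h_T^{-1/2}\norm{L^2(\partial T)}{z}+h_T^{-1/2}\norm{L^2(\partial T)}{\lproj{T}{k+1}\virtual{v}_T-c}$. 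The second contribution is bounded by $\norm{L^2(T)}{\nabla\lproj{T}{k+1}\virtual{v}_T}$ through polynomial trace and Poincaré inequalities.

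The interior part $\virtual{v}_T^0\in H^1_0(T)$ satisfies $\norm{L^2(T)}{\nabla\virtual{v}_T^0}^2=-\int_T\Delta\virtual{v}_T\,\virtual{v}_T^0$ with $\Delta\virtual{v}_T\in\Poly{k+1}(T)$. I would use the inverse estimate $\norm{L^2(T)}{\Delta\virtual{v}_T}\lesssim h_T^{-1}\norm{L^2(T)}{\nabla\virtual{v}_T}$ on the polynomial-Laplacian space, together with the constraint in \eqref{eq:application:def.VT}, to express $\int_T\Delta\virtual{v}_T\,\virtual{v}_T$ through moments of $\nabla\virtual{v}_T$ against $(x-x_T)\Poly{k+1}(T)$, i.e. as in \eqref{eq:add:X}. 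This reduces these moments to $\lproj{T}{k}\nabla\virtual{v}_T$ and boundary values, and a Young inequality then absorbs the $\norm{L^2(T)}{\nabla\virtual{v}_T}$ term into the left-hand side.

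Combined with Proposition \ref{prop:bound.proj}, which controls the projections by the DOFs, this yields $\norm{L^2(T)}{\nabla\virtual{v}_T}^2\lesssim\norm{L^2(T)}{\lproj{T}{k}\nabla\virtual{v}_T}^2+h_T^{-1}\norm{L^2(\partial T)}{z}^2\simeq\virtual{a}_T(\virtual{v}_T,\virtual{v}_T)$. I expect the inverse inequality for virtual functions, i.e. the control of $\Delta\virtual{v}_T$ and of the $H^{1/2}$ lifting under the star-shapedness assumption, to be the delicate step.
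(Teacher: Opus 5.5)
Your plan of reducing both properties to the local equivalence $\virtual{a}_T(\virtual{v}_T,\virtual{v}_T)\simeq\seminorm{H^1(T)}{\virtual{v}_T}^2$, then summing and applying Cauchy--Schwarz, is the paper's plan. Your upper bound is also the paper's, although the trace inequality you need there is the standard continuous one, not Lemma~\ref{lem:scal:trace}, which concerns normal traces in $H^{-\frac12}$. The lower bound, however, does not close as written.

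First, every piece of it ends in $h_T^{-\frac12}\norm{L^2(\partial T)}{z}$ plus $\norm{L^2(T;\Real^2)}{\nabla\lproj{T}{k+1}\virtual{v}_T}$, so you need
\[
\norm{L^2(T;\Real^2)}{\nabla\lproj{T}{k+1}\virtual{v}_T}\lesssim\norm{L^2(T;\Real^2)}{\lproj{T}{k}\nabla\virtual{v}_T}+h_T^{-\frac12}\norm{L^2(\partial T)}{z}.
\]
Proposition~\ref{prop:bound.proj} does not give this. Its right-hand side is an $L^2$-like DOF norm, and bounding that by $\virtual{a}_T$ would need a discrete Poincar\'e inequality that is not available here. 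The estimate is nevertheless true. Write $\nabla\lproj{T}{k+1}\virtual{v}_T=\lproj{T}{k}\nabla\virtual{v}_T-\lproj{T}{k}\nabla z$. Since $\lproj{T}{k-1}z=0$ and $\operatorname{div}\lproj{T}{k}\nabla z\in\Poly{k-1}(T)$, integration by parts gives $\norm{L^2(T;\Real^2)}{\lproj{T}{k}\nabla z}^2=\int_{\partial T}z\,\lproj{T}{k}\nabla z\cdot n_T$. A discrete trace inequality then yields $\norm{L^2(T;\Real^2)}{\lproj{T}{k}\nabla z}\lesssim h_T^{-\frac12}\norm{L^2(\partial T)}{z}$.

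Second, you write $\norm{L^2(T;\Real^2)}{\nabla\virtual{v}_T^0}^2=-\int_T\Delta\virtual{v}_T\,\virtual{v}_T^0$, but then only estimate $\int_T\Delta\virtual{v}_T\,\virtual{v}_T$. The term $\int_T\Delta\virtual{v}_T\,\virtual{v}_T^\partial$ is never treated. Also, if $\virtual{v}_T^\partial$ carries a nonzero polynomial Laplacian, then $\Delta\virtual{v}_T^0\neq\Delta\virtual{v}_T$. The missing term is bounded by $\norm{L^2(T)}{\Delta\virtual{v}_T}\norm{L^2(T)}{\virtual{v}_T^\partial}$. Then use $\norm{L^2(T)}{\virtual{v}_T^\partial}\lesssim h_T\norm{L^2(T;\Real^2)}{\nabla\virtual{v}_T^\partial}+h_T^{\frac12}\norm{L^2(\partial T)}{\virtual{v}_T}$ together with the inverse estimate on $\Delta\virtual{v}_T$. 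Incidentally, \eqref{eq:add:X} is not needed for the term you did treat. Indeed, $\int_T\Delta\virtual{v}_T\,\virtual{v}_T=\int_T\Delta\virtual{v}_T\,\lproj{T}{k+1}\virtual{v}_T$, and a Poincar\'e inequality for the zero-mean polynomial $\lproj{T}{k+1}\virtual{v}_T$, combined with the first fix, suffices.

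The paper avoids both issues. It proves the lower bound only for $z=\virtual{v}_T-\lproj{T}{k+1}\virtual{v}_T$. For this $z$, $\int_T\Delta z\,z=0$ because $\Delta z\in\Poly{k+1}(T)$, so only $\langle\nabla z\cdot n_T,z\rangle_{\partial T}$ remains. That pairing is controlled by three ingredients:
\begin{itemize}
\item the inverse estimate $\tnorm{H^{\frac12}(\partial T)}{z}\lesssim h_T^{-\frac12}\norm{L^2(\partial T)}{z}$;
\item Lemma~\ref{lem:scal:trace};
\item the bound $h_T\norm{L^2(T)}{\Delta z}\lesssim\seminorm{H^1(T)}{z}$.
\end{itemize}
It then concludes with $\seminorm{H^1(T)}{\virtual{v}_T}^2\le\norm{L^2(T;\Real^2)}{\lproj{T}{k}\nabla\virtual{v}_T}^2+\seminorm{H^1(T)}{z}^2$. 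This holds because $\lproj{T}{k}\nabla\virtual{v}_T$ is the best approximation of $\nabla\virtual{v}_T$ in $\Poly{k}(T;\Real^2)$, which contains $\nabla\lproj{T}{k+1}\virtual{v}_T$. Hence $\nabla\lproj{T}{k+1}\virtual{v}_T$ never needs separate control, no harmonic splitting is required, and the enhancement constraint in \eqref{eq:application:def.VT} is not used. Your splitting replaces Lemma~\ref{lem:scal:trace} by an $H^{\frac12}$-stable harmonic extension. That extension rests on the same lifting result, so it brings no simplification.
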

\begin{proof}
  \textit{Part 1.} We start by showing that
  \begin{equation}\label{eq:simple}
    \virtual{s}_T(\virtual{v}_T,\virtual{v}_T) \simeq
    a_{|T}(\virtual{v}_T,\virtual{v}_T)
    = \seminorm{H^1(T)}{\virtual{v}_T}^2
    \qquad \forall T \in \Th , \ \forall \virtual{v}_T \in \virtual{V}_T \textrm{ with } \lproj{T}{k+1} \virtual{v}_T = 0.
  \end{equation}
  By definition \eqref{eq:application:sT:cont} of $\virtual{s}_T$, the condition $\virtual{s}_T(\virtual{v}_T,\virtual{v}_T) \lesssim a_{|T}(\virtual{v}_T,\virtual{v}_T)$ in \eqref{eq:simple} follows immediately from the equivalence in \eqref{eq:application:sT:cont-bis}, a scaled trace inequality, and the approximation properties of $\lproj{T}{k+1}$ applied to $\virtual{v}_T=\virtual{v}_T-\lproj{T}{k+1}\virtual{v}_T$:
  \[
  \virtual{s}_T(\virtual{v}_T,\virtual{v}_T)
  \lesssim h_T^{-1} \norm{L^2(\partial T)}{\virtual{v}_T}^2
  \lesssim h_T^{-2} \norm{L^2(T)}{\virtual{v}_T}^2 + \seminorm{H^1(T)}{\virtual{v}_T}^2
  \lesssim \seminorm{H^1(T)}{\virtual{v}_T}^2 .
  \]
  Regarding the condition $a_{|T}(\virtual{v}_T,\virtual{v}_T) \lesssim \virtual{s}_T(\virtual{v}_T,\virtual{v}_T)$ in \eqref{eq:simple}, we apply an integration by parts and use the fact that $\Delta \virtual{v}_T \overset{\eqref{eq:application:def.VT}}\in \Poly{k+1}(T)$ combined with $\lproj{T}{k+1} \virtual{v}_T=0$ to write
  \begin{align}
    \seminorm{H^1(T)}{\virtual{v}_T}^2 &=
    - \int_T  (\Delta \virtual{v}_T) \virtual{v}_T \, + \, \langle \nabla \virtual{v}_T\cdot n_T , \virtual{v}_T \rangle_{\partial T}\nonumber\\
    &= \langle \nabla \virtual{v}_T\cdot n_T , \virtual{v}_T \rangle_{\partial T}
    \lesssim \tnorm{H^{\frac12}(\partial T)}{\virtual{v}_T}
    \, \tnorm{H^{-\frac12}(\partial T)}{\nabla \virtual{v}_T\cdot n_T},
    \label{eq:m1}
  \end{align}
  where $\tnorm{H^{\frac12}(\partial T)}{\cdot} := h_T^{-\frac12} \norm{L^2(\partial T)}{\cdot} + \seminorm{H^{\frac12}(\partial T)}{\cdot}$, $\tnorm{H^{-\frac12}(\partial T)}{\cdot}$ is its dual norm, and $\langle\cdot,\cdot\rangle_{\partial T}$ is the duality pairing between $H^{\frac12}(\partial T)$ and $H^{-\frac12}(\partial T)$.
  By a standard scaling argument, since $\virtual{v}_T$ on $\partial T$ is a piecewise polynomial (continuous) function, again recalling \eqref{eq:application:sT:cont-bis} it follows
  \begin{equation}\label{eq:m2}
    \tnorm{H^{\frac12}(\partial T)}{\virtual{v}_T} \lesssim
    h_T^{-\frac12} \norm{L^2(\partial T)}{\virtual{v}_T}
    = s_T(\virtual{v}_T,\virtual{v}_T)^{\frac12}.
  \end{equation}
  On the other hand, Lemma \ref{lem:scal:trace} applied to $\nabla\virtual{v_T}$ yields
  \[
  \tnorm{H^{-\frac12}(\partial T)}{\nabla \virtual{v}_T\cdot n_T}
  \lesssim
  \seminorm{H^1(T)}{\virtual{v}_T}
  + h_T \seminorm{L^2(T)}{\Delta \virtual{v}_T} .
  \]
  A classical scaling argument, exploiting that $\Delta \virtual{v}_T \in \Poly{k+1}(T)$ (see for instance \cite{Beirao-da-Veiga.Lovadina.ea:17}), gives from the above bound
  \begin{equation}\label{eq:m3}
    \tnorm{H^{-\frac12}(\partial T)}{\nabla \virtual{v}_T\cdot n_T} \lesssim \seminorm{H^1(T)}{\virtual{v}_T} .
  \end{equation}
  Substitution of \eqref{eq:m2} and \eqref{eq:m3} into \eqref{eq:m1} yields
  \begin{equation}\label{eq:bound.vT.H1.sT}
    \seminorm{H^1(T)}{\virtual{v}_T} \lesssim s_T(\virtual{v}_T,\virtual{v}_T)^{\frac12} ,
  \end{equation}
  which concludes the proof of \eqref{eq:simple}.
  \medskip\\
  \textit{Part 2.} We now prove \eqref{eq:virtual:coerc} and \eqref{eq:virtual:local-bound}.
  By the orthogonality properties of $\lproj{T}{k}$, the Pythagorean inequality gives, for any $T\in\Th$ and any $\virtual{w}_T \in \virtual{V}_T$,
  \begin{align}
    \seminorm{H^1(T)}{\virtual{w}_T}^2
    &=  \norm{L^2(T;\Real^2)}{\lproj{T}{k}(\nabla\virtual{w}_T)}^2
    + \norm{L^2(T;\Real^2)}{\nabla \virtual{w}_T - \lproj{T}{k}(\nabla \virtual{w}_T)}^2
    \nonumber\\
    &\le \norm{L^2(T;\Real^2)}{\lproj{T}{k}(\nabla\virtual{w}_T)}^2
    + \norm{L^2(T; \Real^2)}{\nabla (\virtual{w}_T - \lproj{T}{k+1}\virtual{w}_T)}^2
    \nonumber\\
    &=
    \norm{L^2(T; \Real^2)}{\lproj{T}{k}(\nabla\virtual{w}_T)}^2
    + a_{|T}(\virtual{w}_T - \lproj{T}{k+1}\virtual{w}_T, \virtual{w}_T - \lproj{T}{k+1}\virtual{w}_T)
    \nonumber\\
    \overset{\eqref{eq:simple}}&\lesssim \int_T | \lproj{T}{k} (\nabla \virtual{w}_T) |^2
    + s_T(\virtual{w}_T - \lproj{T}{k+1}\virtual{w}_T,\virtual{w}_T - \lproj{T}{k+1}\virtual{w}_T)
    = \virtual{a}_T(\virtual{w}_T,\virtual{w}_T) \, ,
    \label{eq:for.norm.equivalence}
  \end{align}
  where we have used the fact that $\lproj{T}{k} \nabla \virtual{w}_T$ is the element of $\Poly{k}(T; \Real^2)$ that minimises the $L^2$-distance from $\nabla \virtual{w}_T$ in the second step, and is therefore closer to this function than $\nabla (\lproj{T}{k+1}\virtual{w}_T)\in\Poly{k}(T;\Real^2)$. The coercivity condition \eqref{eq:virtual:coerc} follows by summing on all the elements $T \in \Th$.

  The local boundedness condition \eqref{eq:virtual:local-bound} follows immediately for the first term (consistency contribution) in the definition \eqref{eq:application:aT:cont} of $\virtual{a}_T$ due to the $L^2$-boundedness of $\lproj{T}{k}$.
  We therefore focus on showing \eqref{eq:virtual:local-bound} for the stability term in \eqref{eq:application:aT:cont}. Applying \eqref{eq:simple} we can write, for all $T\in\Th$ and all $\virtual{w}_T \in \virtual{V}_T $,
  \begin{equation}\label{eq:for.norm.equivalence.2}
    \virtual{s}_T(\virtual{w}_T - \lproj{T}{k+1}\virtual{w}_T,\virtual{w}_T - \lproj{T}{k+1}\virtual{w}_T )
    \lesssim \seminorm{H^1}{\virtual{w}_T - \lproj{T}{k+1}\virtual{w}_T}^2
    \lesssim \seminorm{H^1}{\virtual{w}_T}^2
    + \seminorm{H^1}{\lproj{T}{k+1}\virtual{w}_T}^2
    \lesssim \seminorm{H^1}{\virtual{w}_T}^2 ,
  \end{equation}
  where we applied standard $H^1$-boundedness results for $L^2$-orthogonal projections on polynomials in the last bound.
  Bound \eqref{eq:virtual:local-bound} now follows from the Cauchy--Schwarz inequality, since the involved forms are semi positive-definite and symmetric.
\end{proof}

We now state interpolation estimates for the virtual spaces.
From this point on, for any integer $m \ge 0$, we let $H^m(\Th) \coloneqq \left\{ v \in L^2(\Omega) \;:\; \text{$v_{|T} \in H^m(T)$ for all $T \in \Th$}\right\}$ denote the broken Sobolev space of degree $m$.

\begin{proposition}[Interpolation estimates for the VEM spaces] \label{prop:interp:vem}
  If $u \in H^{k+2}(\Th)$, for all $T \in \Th$ it holds
  \[
  \seminorm{H^1(T)}{u - I_T u} \lesssim h_T^{k+1} \seminorm{H^{k+2}(T)}{u}.
  \]
\end{proposition}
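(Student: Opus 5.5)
The plan is to follow the path set out in Remark \ref{rem:boundedness.Ih} together with the lemma on the approximation properties of the virtual interpolator. That is: use the norm equivalence \eqref{eq:E}, whose proof is hidden in Proposition \ref{prop:main-cont}, to obtain the continuity \eqref{eq:cont.IT} of $\virtual{I}_T$. The polynomial invariance $\virtual{I}_T w = w$ for $w\in\Poly{k+1}(T)\subset \virtual{V}_T$ then reduces the estimate to polynomial approximation.

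\emph{Step 1: control of the virtual function by its DOFs.} For $\virtual{v}_T\in\virtual{V}_T$, the bound \eqref{eq:for.norm.equivalence} gives $\seminorm{H^1(T)}{\virtual{v}_T}^2\lesssim \virtual{a}_T(\virtual{v}_T,\virtual{v}_T)$. By construction \eqref{eq:link:vem.fd.a}, this equals $\discrete{a}_T(\dofmap_T\virtual{v}_T,\dofmap_T\virtual{v}_T)$, which by \eqref{eq:application:ah:coercivity.boundedness} is $\simeq\norm{\discrete{V}_T}{\dofmap_T\virtual{v}_T}^2$. This is the second inequality in \eqref{eq:E}; it holds for $\widetilde{V}_h$-type local functions, since boundary conditions play no role locally. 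Applied to $\virtual{v}_T=\virtual{I}_T v$ and using $\dofmap_T\virtual{I}_T v=\dofmap_T v$, it yields $\seminorm{H^1(T)}{\virtual{I}_T v}\lesssim\norm{\discrete{V}_T}{\dofmap_T v}$.

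\emph{Step 2: continuity of the DOFs in a suitable (semi)norm, i.e.\ \eqref{eq:cont.dofs}.} I take
\[
\norm{(V_I)_{|T}}{v}\coloneqq h_T^{-1}\norm{L^2(T)}{v}+\seminorm{H^1(T)}{v}+h_T\seminorm{H^2(T)}{v}.
\]
By \eqref{eq:application:discrete.norm}, $\norm{\discrete{V}_T}{\dofmap_T v}^2=\norm{L^2(T;\Real^2)}{G_T\dofmap_T v}^2+h_T^{-1}\norm{L^2(\partial T)}{\RTa\dofmap_T v-\trrec{\partial T}\dofmap_T v}^2$. I would bound the first term via the $\norm{1,T}{\cdot}$ equivalence or, more directly, via Lemma \ref{lem:approx.GT}. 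For the second, I write $\RTa\dofmap_T v-\trrec{\partial T}\dofmap_T v=(\Pi_T^a v-v)+(v-\trrec{\partial T}\dofmap_T v)$ on $\partial T$. The first piece is handled by a continuous trace inequality and Lemma \ref{lem:approx.PiTa} with $s=2$. For the second piece, \eqref{eq:estim.poly.edge} is used edgewise: on each edge, $\trrec{E}\dofmap_E v$ is a polynomial of degree $k+1$ interpolating the vertex values and $(k-1)$-moments of $v$. Its $L^2(E)$ norm is then controlled by $\norm{L^2(E)}{\lproj{E}{k-1}v}+h_T^{1/2}\max_{V\in\VE}|v(x_V)|$, and hence, via trace and the 2D embedding $H^2(T)\hookrightarrow C(\overline T)$ scaled to $T$, by $h_T^{1/2}\norm{(V_I)_{|T}}{v}$. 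Together this gives \eqref{eq:cont.dofs}, and Remark \ref{rem:boundedness.Ih} gives \eqref{eq:cont.IT}.

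\emph{Step 3: conclusion.} The lemma on the approximation properties of the virtual interpolator gives
\[
\seminorm{H^1(T)}{u-\virtual{I}_Tu}\lesssim\inf_{w\in\Poly{k+1}(T)}\big(\seminorm{H^1(T)}{u-w}+\norm{(V_I)_{|T}}{u-w}\big).
\]
I choose $w=\Pi_T^a u$, or the averaged Taylor polynomial, and apply Lemma \ref{lem:approx.PiTa} with $s=k+2$, $m\in\{0,1,2\}$. Each term is then $\lesssim h_T^{k+1}\seminorm{H^{k+2}(T)}{u}$.

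The main obstacle is Step 2, namely bounding vertex values. This requires $H^2$ rather than $H^{1+\epsilon}$ regularity, which is fine here since $k+2\ge2$. The scaled Sobolev embedding, $|v(x_V)|\lesssim h_T^{-1}\norm{L^2(T)}{v}+\seminorm{H^1(T)}{v}+h_T\seminorm{H^2(T)}{v}$, must also be uniform on star-shaped polygons. I would obtain it on the regular submesh triangle containing $x_V$ by an affine map to a reference triangle.
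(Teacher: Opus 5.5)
Your argument is correct. Its skeleton matches the paper's: bound $\seminorm{H^1(T)}{\virtual{v}_T}$ by DOF quantities, apply this to $\virtual{I}_T(u-w)$ using $\Poly{k+1}(T)\subset\virtual{V}_T$, and finish by polynomial approximation. The genuine difference is \emph{which} DOF quantity carries the stability.

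The paper does not pass through $\norm{\discrete{V}_T}{\cdot}$. It combines the following ingredients:
\begin{itemize}
\item \eqref{eq:bound.vT.H1.sT} applied to $\virtual{v}_T-\lproj{T}{k+1}\virtual{v}_T$;
\item an inverse inequality on $\lproj{T}{k+1}\virtual{v}_T$;
\item \eqref{eq:application:sT:cont-bis} and a discrete trace inequality;
\item \eqref{eq:estim.poly.edge} and Proposition~\ref{prop:bound.proj}.
\end{itemize}
This yields \eqref{eq:cor-dof-bound}, i.e.\ control by the $L^2$-type scaled DOF quantity $h_T^{-1}\norm{L^2(T)}{\lproj{T}{k-1}\virtual{v}_T}+h_T^{-1/2}\sum_{E\in\ET}\norm{L^2(E)}{\lproj{E}{k-1}\virtual{v}_T}+\sum_{V\in\VT}|\virtual{v}_T(x_V)|$. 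The benefit is that the final continuity step becomes elementary. It only needs the $L^2$-boundedness of $\lproj{T}{k-1}$ and $\lproj{E}{k-1}$, trace inequalities and point values, with no approximation property of the reconstructions.

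Your route gets the stability step essentially for free from the norm equivalence \eqref{eq:E}, already uncovered via \eqref{eq:for.norm.equivalence} and \eqref{eq:application:ah:coercivity.boundedness}. It avoids Proposition~\ref{prop:bound.proj} and is a literal instantiation of Remark~\ref{rem:boundedness.Ih} and the abstract interpolator lemma. The paper itself notes that an $H^1$-like DOF norm could be used instead. The price is that \eqref{eq:cont.dofs} for the energy-type norm $\norm{\discrete{V}_T}{\cdot}$ requires Lemmas~\ref{lem:approx.GT} and~\ref{lem:approx.PiTa}, whose proofs rest on external results.

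One detail in Step~2 needs fixing. Bounding $\norm{L^2(T;\Real^2)}{G_T\dofmap_T v}$ through Lemma~\ref{lem:approx.GT} produces $h_T^{k+1}\seminorm{H^{k+2}(T)}{v}$, which for $k\ge 1$ is not controlled by your $\norm{(V_I)_{|T}}{\cdot}$. You can fix this in either of two ways:
\begin{itemize}
\item Add this term to the seminorm; it equals $h_T^{k+1}\seminorm{H^{k+2}(T)}{u}$ when $v=u-w$ with $w\in\Poly{k+1}(T)$, so Step~3 is unaffected.
\item Bound $G_T$ directly. Since $\operatorname{div}\tau\in\Poly{k-1}(T)$, \eqref{eq:def.GT} and an integration by parts give $\int_T G_T\dofmap_T v\cdot\tau=\int_T\nabla v\cdot\tau+\int_{\partial T}(\trrec{\partial T}\dofmap_T v-v)\,\tau\cdot n_T$. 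Hence $\norm{L^2(T;\Real^2)}{G_T\dofmap_T v}\lesssim\seminorm{H^1(T)}{v}+h_T^{-1/2}\norm{L^2(\partial T)}{v-\trrec{\partial T}\dofmap_T v}$, and your edge estimate already handles the last term.
\end{itemize}
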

\begin{proof}
  Let $T \in \Th$ and $\virtual{v}_T \in \virtual{V}_T$.
  Using a triangle inequality, we get
  \begin{align*}
    \seminorm{H^1(T)}{\virtual{v}_T}&\le \seminorm{H^1(T)}{\virtual{v}_T-\lproj{T}{k+1}\virtual{v}_T}+\seminorm{H^1(T)}{\lproj{T}{k+1}\virtual{v}_T}\\
    \overset{\eqref{eq:bound.vT.H1.sT}}&\lesssim s_T(\virtual{v}_T-\lproj{T}{k+1}\virtual{v}_T,\virtual{v}_T-\lproj{T}{k+1}\virtual{v}_T)^{\frac12}+h_T^{-1}\norm{L^2(T)}{\lproj{T}{k+1}\virtual{v}_T}\\
    \overset{\eqref{eq:application:sT:cont-bis}}&\lesssim h_T^{-\frac12}\norm{L^2(\partial T)}{\virtual{v}_T-\lproj{T}{k+1}\virtual{v}_T}+
    h_T^{-1}\norm{L^2(T)}{\lproj{T}{k+1}\virtual{v}_T}\\
    &\lesssim h_T^{-\frac12}\norm{L^2(\partial T)}{\virtual{v}_T}+
    h_T^{-1}\norm{L^2(T)}{\lproj{T}{k+1}\virtual{v}_T},
  \end{align*}
  where we have additionally used an inverse inequality for polynomials in the second step,
  while the conclusion was obtained using a triangle inequality and a discrete trace inequality.
    Invoking, respectively, \eqref{eq:estim.poly.edge} and \eqref{eq:cont-dof.proj} for the terms in the right-hand side of the above inequality, we then get
  \begin{equation}\label{eq:cor-dof-bound}
    \seminorm{H^1(T)}{\virtual{v}_T} \lesssim
    h_T^{-1} \norm{L^2(T)}{\lproj{T}{k-1} \virtual{v}_T} +
    h_T^{-\frac12} \sum_{E \in \ET} \norm{L^2(\partial E)}{\pi_E^{k-1}\virtual{v}_{T}}
    + \sum_{V \in \VT} |\virtual{v}_T(x_V)|.
  \end{equation}
  Notice that the right hand side above depends only on the degree of freedom values of $\virtual{v}_T$.
  Let now $p \in \Poly{k+1}(T)$. First by the triangle inequality and using that $\Poly{k+1}(T) \subseteq V_T$,
  then applying bound \eqref{eq:cor-dof-bound} to the function $I_T(p - u) \in \virtual{V}_T$ and using that
  $\dofmap_T I_T (p - u) \overset{\eqref{eq:virtual:Ih}}= \dofmap_T(p - u)$,
  we derive
  \[
  \begin{aligned}
    \seminorm{H^1(T)}{u - I_T u}
    &\le \seminorm{H^1(T)}{u - p} + \seminorm{H^1(T)}{I_T (p - u)}
    \\
    &\lesssim \seminorm{H^1(T)}{u - p}
    + h_T^{-1} \norm{L^2(T)}{\lproj{T}{k-1}(p-u)}
    \\
    &\quad
    + h_T^{-\frac12} \sum_{E \in \ET} \norm{L^2(\partial E)}{\pi_E^{k-1}(p-u)}
    + \sum_{V \in \VT} |(p-u)(x_V)| .
  \end{aligned}
  \]
  By the boundedness of the involved $L^2$-orthogonal projectors and using polynomial approximation bounds for a suitable $p \in \Poly{k+1}(T)$, we obtain the desired interpolation estimate.
\end{proof}

\begin{remark}[Alternative approach for interpolation estimates on virtual spaces]\label{rem:vem:Ih}
  The above interpolation estimate represents a possible way to prove approximation estimates for the VEM space, based on the fact that the space contains polynomials and using the stability estimate. Another approach often found in the literature (see, e.g.~\cite{Mora.Rivera.ea:15, Beirao-da-Veiga.Vacca:22}) is to build a virtual function such that, in addition to interpolating the objective function $u$ on the edges, has a Laplacian which approximates the Laplacian of $u$. Such approach builds the approximation estimate directly rather than passing through $\Poly{k+1}(T) \subseteq \virtual{V}_T$, but the ensuing virtual function is not interpolatory for all DOFs.
\end{remark}

We can now state the final converge estimate.

\begin{theorem}[Error estimate based on virtual functions]\label{thm:vem:conv:model}
    Assume that the solution to \eqref{eq:weak} satisfies $u\in H^{k+2}(\Th)$, that $f\in H^{k+1}(\Th)$, and that $\virtual{u}_h \in \virtual{V}_h$ is the solution to \eqref{eq:virtual:discrete} with the local bilinear forms and global linear form defined by \eqref{eq:application:aT:cont} and \eqref{eq:application:ell.h:RTL:virtual}, respectively.
  Then, it holds
  \begin{equation}\label{eq:vem:conv}
    \norm{H^1(\Omega)}{u - \virtual{u}_h}
    \lesssim h^{k+1} \left(
    \seminorm{H^{k+2}(\Th)}{u} + \seminorm{H^{k+1}(\Th)}{f}
    \right).
  \end{equation}
\end{theorem}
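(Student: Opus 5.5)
The estimate \eqref{eq:vem:conv} follows from the abstract result of Theorem \ref{thm:vem:error.estimate}, specialised to the setting \eqref{eq:poisson.2} with $\mathcal P_T=\Poly{k+1}(T)$, $\widetilde{\mathcal P}_T=\Poly{k}(T)$ and $\Pi_T^L=\lproj{T}{k}$. We first check its hypotheses. The consistency property \eqref{eq:virtual:consistency} holds for \eqref{eq:application:aT:cont}. Indeed, for $w\in\Poly{k+1}(T)$ we have $\nabla w\in\Poly{k}(T;\Real^2)$, so $\int_T\lproj{T}{k}(\nabla w)\cdot\lproj{T}{k}(\nabla\virtual{v}_T)=\int_T\nabla w\cdot\nabla\virtual{v}_T$. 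Moreover $w-\lproj{T}{k+1}w=0$, so the stabilisation term vanishes. The coercivity \eqref{eq:virtual:coerc}, and hence the inf-sup condition \eqref{eq:virtual:inf-sup}, together with the local boundedness \eqref{eq:virtual:local-bound}, are given by Proposition \ref{prop:main-cont}. Therefore \eqref{eq:vem:error.estimate} holds with $\norm{V}{\cdot}=\seminorm{H^1(\Omega)}{\cdot}$ and $L=L^2(\Omega)$.

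Next we bound each term on the right-hand side of \eqref{eq:vem:error.estimate} by $h^{k+1}$ times the appropriate seminorms.
\begin{itemize}
\item \emph{Interpolation term.} Summing the squares of Proposition \ref{prop:interp:vem} over $T\in\Th$ gives $\norm{V}{u-\virtual{I}_hu}\lesssim h^{k+1}\seminorm{H^{k+2}(\Th)}{u}$.
\item \emph{Polynomial term.} Take $w=\lproj{T}{k+1}u$, or any suitable Taylor/averaged polynomial. Standard polynomial approximation on star-shaped elements, valid under the regularity assumptions of Section \ref{sec:mesh}, yields $\inf_{w\in\Poly{k+1}(T)}\seminorm{H^1(T)}{u-w}\lesssim h_T^{k+1}\seminorm{H^{k+2}(T)}{u}$.
\item \emph{Data term.} The same approximation properties of $\lproj{T}{k}$ give $\norm{L^2(T)}{f-\lproj{T}{k}f}\lesssim h_T^{k+1}\seminorm{H^{k+1}(T)}{f}$.
\end{itemize}
Summing over the elements and using $h_T\le h$ gives the required $h^{k+1}$ bounds for all three terms.

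Finally, the left-hand side of \eqref{eq:vem:conv} uses the full $H^1(\Omega)$ norm rather than the seminorm. Since $u-\virtual{u}_h\in H^1_0(\Omega)$, the Poincaré inequality (whose constant depends only on $\Omega$) shows that the two are equivalent, which concludes the argument.

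There is no real obstacle here. All the delicate work, namely the stability and norm-equivalence content of Proposition \ref{prop:main-cont} and the virtual interpolation estimate of Proposition \ref{prop:interp:vem}, is already available. The only points needing care are:
\begin{itemize}
\item the regularity bookkeeping: since only broken regularity $u\in H^{k+2}(\Th)$ is assumed, every estimate must be applied elementwise. The interpolator also requires $u\in V_I$; this is guaranteed because $u\in H^1_0(\Omega)\cap H^{k+2}(\Th)$ is continuous at the vertices when $k+2\ge2$ in two dimensions;
\item the application of the Poincaré inequality described above.
\end{itemize}
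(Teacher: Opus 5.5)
Your proposal is correct and follows the paper's proof: Proposition~\ref{prop:main-cont} lets you invoke Theorem~\ref{thm:vem:error.estimate}, Proposition~\ref{prop:interp:vem} handles the interpolation term, and standard polynomial approximation on star-shaped elements handles the other two terms. Your explicit check of the consistency property and your Poincar\'e step from $\norm{V}{\cdot}$ to the full $H^1(\Omega)$ norm only fill in details the paper leaves implicit. One small slip: continuity at the vertices is what makes the DOFs of $u$ well defined, but membership in $V_I=V\cap H^{1+\epsilon}(\Omega)$ actually follows from $\nabla u$ being piecewise $H^1$, hence in $H^{s}(\Omega;\Real^2)$ for $s<\frac12$.
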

\begin{proof}
Due to Proposition \ref{prop:main-cont}, we can apply Theorem \ref{thm:vem:error.estimate}.
The first term in the right-hand side of \eqref{eq:vem:error.estimate} is bounded using Proposition \ref{prop:interp:vem}.
The last two terms in the right-hand side of \eqref{eq:vem:error.estimate} are trivial to bound by polynomial approximation properties on star-shaped domains. Note that the regularity requirement on $f$ can be easily weakened, see Remark \ref{rem:load:ort}).
\end{proof}

\subsubsection{Uncovering the norm equivalence}

As discussed in Section \ref{sec:practical.construction.vem}, the design of the virtual stabilization bilinear form $\virtual{s}_T$ is usually done from the DOFs in the spirit of \eqref{eq:design.virtual.sT}.
This is plainly visible from the expressions \eqref{eq:application:sT} and \eqref{eq:application:sT:cont}.
The stability of the resulting virtual bilinear form $\virtual{a}_T$ is then established through arguments strongly related to the norm equivalence \eqref{eq:E}.
Let us exemplify this principle on the method considered here.

For all $\virtual{w}_T\in\virtual{V}_T$, we have
\[
\begin{aligned}
  \norm{\discrete{V}_T}{\dofmap_T\virtual{w}_T}^2\overset{\eqref{eq:application:discrete.norm}}&= \norm{L^2(T;\Real^2)}{G_T(\dofmap_T\virtual{w}_T)}^2+
  h_T^{-1}\norm{L^2(\partial T)}{\RTa\dofmap_T\virtual{w}_T-\trrec{\partial T}(\dofmap_T\virtual{w}_T)}^2\\
  \overset{\eqref{eq:GT.RT.virtual}}&= \norm{L^2(T;\Real^2)}{\lproj{T}{k}(\nabla\virtual{w}_T)}^2+
  h_T^{-1}\norm{L^2(\partial T)}{\lproj{T}{k+1}\virtual{w}_T-\virtual{w}_T}^2\\
  \overset{\eqref{eq:application:sT:cont-bis}}&\simeq\norm{L^2(T;\Real^2)}{\lproj{T}{k}(\nabla\virtual{w}_T)}^2+
  \virtual{s}_T(\lproj{T}{k+1}\virtual{w}_T-\virtual{w}_T,\lproj{T}{k+1}\virtual{w}_T-\virtual{w}_T).
\end{aligned}
\]
The bound \eqref{eq:for.norm.equivalence} then shows that $\seminorm{H^1(T)}{\virtual{w}_T}^2 \lesssim \norm{\discrete{V}_T}{\dofmap_T\virtual{w}_T}^2$, while the estimate \eqref{eq:for.norm.equivalence.2} and the $L^2(T)$-boundedness of $\lproj{T}{k}$ give $\norm{\discrete{V}_T}{\dofmap_T\virtual{w}_T}^2\lesssim\seminorm{H^1(T)}{\virtual{w}_T}^2$.
This shows that the key estimates \eqref{eq:for.norm.equivalence} and \eqref{eq:for.norm.equivalence.2}, established to carry out the usual VEM analysis of the bilinear form, directly translate into the norm equivalence
\[
\norm{\discrete{V}_T}{\dofmap_T\virtual{w}_T}\simeq \seminorm{H^1(T)}{\virtual{w}_T},
\]
which is precisely \eqref{eq:E} in our setting.

\begin{remark}[Use of DOF-based norms in VEM analysis]
Although not always explicitly identified, DOF-based norms are often used in VEM methods, at the design stage (e.g., through the popular ``DOFI-DOFI'' stabilization, which is based on a scaled Euclidean norm of the DOF values) and during their analysis. An illustration of the latter is the $L^2$-like DOF-based norm appearing in right-hand side of \eqref{eq:cont-dof.proj} (note that, with minor modifications in the proof of Proposition \ref{prop:interp:vem}, we could have also used the $H^1$-like DOF norm \eqref{eq:def.H1.norm}).
It should however be noted that alternative forms of analysis of VEM can be carried out without explicitly resorting to DOF norms, but using known polynomial approximation properties.
\end{remark}

\subsection{Convergence analysis based on a conforming lifting}\label{sec:fd.analysis.example}

We construct a lifting satisfying Assumption \ref{ass:conforming.lifting} with $\mathcal P_T \coloneqq \Poly{k+1}(T)$.
The existence of such lifting and \eqref{eq:application:ah:coercivity.boundedness} (which implies \eqref{eq:discrete:inf-sup} and \eqref{eq:discrete.a.T:boundedness}) make it possible to invoke Theorem \ref{thm:discrete:error.estimate} to derive an error estimate for the scheme, as shown in Theorem \ref{thm:error.estimates.via.lifting} below.
\smallskip

Let $\discrete{v}_h\in\discrete{V}_h$ and denote by $\trrec{\skel}\discrete{v}_h$ the piecewise polynomial function on the mesh skeleton $\skel \coloneqq \bigcup_{E\in\Eh} \overline{E}$ such that $(\trrec{\skel}\discrete{v}_h)_{|E}=\trrec{E}\discrete{v}_E\in\Poly{k+1}(E)$ for all $E\in\Eh$.
By construction, $\trrec{\skel}\discrete{v}_h \in C^0(\Gamma_h)$.

The lifting $L_h\discrete{v}_h$ is constructed as
\begin{equation}\label{eq:application:Lh}
  L_h\discrete{v}_h \coloneqq L^1_h\discrete{v}_h + L^2_h\discrete{v}_h,
\end{equation}
where the contribution $L^1_h\discrete{v}_h$ lifts the skeleton function $\trrec{\skel}\discrete{v}_h$ inside the elements, while $L^2_h\discrete{v}_h$ corrects this initial lifting in order to achieve the consistency and projection properties \eqref{eq:Lh:consistency:a} and \eqref{eq:Lh:consistency:ell}.

Recall that, by mesh regularity assumption, there is a regular conforming triangular submesh $\Sh$ of $\Th$.
Let $\Poly{k+1}_{c,0}(\Sh)\subset V$ be the Lagrange space of continuous functions on $\Omega$, that are piecewise polynomial of total degree $\le k+1$ on $\Sh$, and that vanish on $\partial\Omega$. Let $\mathfrak N_h$ be the set of Lagrange nodes in this space and define a lifting $L^1_h\discrete{v}_h\in\Poly{k+1}_{c,0}(\Sh)$ such that, for all $P \in \mathfrak N_h$,
\begin{equation}\label{eq:def.L1h}
  (L^1_h\discrete{v}_h)(x_P) =
  \begin{cases}
    (\trrec{\skel}\discrete{v}_h)(x_P)&\text{ if $P \in \skel$,}\\
    (\RTa\discrete{v}_T)(x_P)&\text{otherwise, with $T$ such that $p\in T$.}
  \end{cases}
\end{equation}
This choice uniquely defines the nodal values of $L^1_h\discrete{v}_h$ since $\trrec{\skel}\discrete{v}_h$ is continuous on $\skel$ and since, if $P \not \in \skel$, there exists a unique $T \in \Th$ such that $p\in T$.

The correction $L^2_h\discrete{v}_h$ is made of bubble functions inside each element. To do so, we start by selecting, for each $T\in\Th$, a function $b_T$ such that
\begin{equation}\label{eq:def.bT}
  \begin{aligned}
    &\text{$b_T:T\to [0,1]$ is Lipschitz-continuous, $(b_T)_{|\partial T}=0$,}\\
    &\text{$b_T\gtrsim 1$ on a ball of radius $\simeq h_T$, and $\norm{L^\infty(T)}{\nabla b_T}\lesssim h_T^{-1}$}.
  \end{aligned}
\end{equation}
If the triangular submesh $\Sh$ has at least one vertex inside $T$, $b_T$ can be defined as the hat function on $\Sh$ associated with this vertex.
Otherwise, taking a ball $B_T\subset T$ of radius $\simeq h_T$ (such a ball exists by mesh regularity assumption), $b_T$ can be taken as $h_T^{-1}$ times the distance to $\partial B_T$ inside $B_T$ and as $0$ outside $B_T$.
We then define $L^2_h\discrete{v}_h\in V$ such that, for all $T\in\Th$,
\begin{equation}\label{eq:def.L2h}
  \begin{aligned}
    &(L^2_h\discrete{v}_h)_{|T}\coloneq b_T q_{\discrete{v},T}\text{ where $q_{\discrete{v},T}\in\Poly{k}(T)$ is such that}\\
    &\int_T b_T q_{\discrete{v},T} r = \int_T (\RTL\discrete{v}_T - L^1_h\discrete{v}_h)r\qquad\forall r\in\Poly{k}(T).
  \end{aligned}
\end{equation}
The existence and uniqueness of $q_{\discrete{v},T}$ follow from the Riesz representation theorem applied in $\Poly{k+1}(T)$ equipped with the inner product $\langle \alpha,\beta\rangle_{b,T}\coloneq \int_T b_T  \alpha\beta$, which induces a norm equivalent, uniformly in $T$ and $h$, to the $L^2(T)$-norm (see the arguments in the proof of \cite[Lemma 1.25, point (i)]{Di-Pietro.Droniou:20}).
The property $L^2_h\discrete{v}_h\in V$ follows from the vanishing of each $b_T$ on $\partial T$, which ensures inter-element continuity.
Notice also that, if $b_T$ can be taken in $\Poly{1}_c(\Sh)$ as described above, then $L^2_h\discrete{v}_h\in\Poly{k+1}_{c,0}(\Sh)$.

\begin{lemma}[Properties of $L_h$]\label{lem:application:lifting}
  The lifting $L_h$ defined by \eqref{eq:application:Lh} satisfies Assumption \ref{ass:conforming.lifting}.
\end{lemma}

\begin{proof}
  \emph{Proof of \eqref{eq:Lh:consistency:ell}.}
  By \eqref{eq:def.L2h} we have, for all $T\in\Th$ and all $r\in\Poly{k}(T)$,
  \[
  \int_T L_h\discrete{v}_h\, r = \int_T (L^1_h\discrete{v}_h+L^2_h\discrete{v}_h)\, r = \int_T \RTL \discrete{v}_T\, r,
  \]
  which, recalling that $\Pi^L_T = \lproj{T}{k}$, shows $\Pi^L_T (L_h \discrete{v}_h)_{|T} = \RTL \discrete{v}_T$ as required.

  We also note a useful consequence of this equality. Recalling that $\RTL=\Pi^L_T\circ\dofmap_T^{-1}$, we have $\Pi^L_T(L_h\discrete{v}_h)=\Pi^L_T(\dofmap_T^{-1}\discrete{v}_h)$ and thus, since $\lproj{T}{k-1}\circ\Pi^L_T=\lproj{T}{k-1}\circ\lproj{T}{k}=\lproj{T}{k-1}$,
  \begin{equation}\label{eq:proj.k-1}
    \lproj{T}{k-1}(L_h\discrete{v}_h)=\lproj{T}{k-1}(\dofmap_T^{-1}\discrete{v}_h)\overset{\eqref{eq:application:sigmah}}= v_T.
  \end{equation}
  \\
  \emph{Proof of \eqref{eq:Lh:consistency:a}.}
  Let $T \in \Th$ and $w \in \Poly{k+1}(T)$.
  Using the definition \eqref{eq:application:aT} of $\discrete{a}_T$ together with the polynomial consistency \eqref{eq:application:GT.RaT:polynomial.consistency} of $G_T$ and $R^a_T$, we infer, for all $\discrete{v}_T\in\discrete{V}_T$,
  \[
  \discrete{a}_T(\dofmap_T w, \discrete{v}_T) = \int_T \nabla w\cdot G_T\discrete{v}_T
  \overset{\eqref{eq:def.GT}}= - \int_T v_T (\operatorname{div} \nabla w)
  + \int_{\partial T} \trrec{\partial T} \discrete{v}_T \, (\nabla w \cdot n_T).
  \]
  We then note that, by \eqref{eq:def.L1h} and since the values at Lagrange nodes entirely determine polynomials of degree $\le k+1$ on each edge, we have $\trrec{\partial T} \discrete{v}_T=(L^1_h\discrete{v}_h)_{|\partial T}=(L_h\discrete{v}_h)_{|\partial T}$,
  where the last equality comes from \eqref{eq:def.L2h} and \eqref{eq:def.bT} which show that $(L^2_h\discrete{v}_h)_{|\partial T} =0$.
  Using \eqref{eq:proj.k-1} and the fact that $\operatorname{div} \nabla w\in\Poly{k-1}(T)$, we therefore get
  \[
  \discrete{a}_T(\dofmap_T w, \discrete{v}_T) = - \int_T L_h\discrete{v}_h\, (\operatorname{div} \nabla w)
  + \int_{\partial T} L_h\discrete{v}_h \, (\tau \cdot n_T)=\int_T \nabla w\cdot\nabla (L_h\discrete{v}_h)
  =a_{|T}(w,(L_h\discrete{v}_h)_{|T}),
  \]
  where we have performed an integration by parts in the second step. This concludes the proof of \eqref{eq:Lh:consistency:a}.
  \medskip\\
  \emph{Proof of \eqref{eq:Lh:boundedness}.}
  Let $T\in\Th$ and use a triangle inequality to write
  \begin{align}
    \norm{L^2(T;\Real^2)}{\nabla L_h\discrete{v}_h}^2
    &\le 2\norm{L^2(T;\Real^2)}{\nabla (L_h\discrete{v}_h-\RTa\discrete{v}_T)}^2
    + 2\norm{L^2(T;\Real^2)}{\nabla \RTa\discrete{v}_T}^2
    \nonumber\\
    \overset{\eqref{eq:application:RTa:boundedness}}&\lesssim
    \norm{L^2(T;\Real^2)}{\nabla (L_h\discrete{v}_h-\RTa\discrete{v}_T)}^2
    + \norm{\discrete{V}_T}{\discrete{v}_T}^2.
    \label{eq:application:bound.Lh}
  \end{align}
  To estimate the first term in the right-hand side above, we recall \eqref{eq:application:Lh} and use a triangle inequality to write
  \begin{align}
    \norm{L^2(T;\Real^2)}{\nabla (L_h\discrete{v}_h-\RTa\discrete{v}_T)}^2
    &\le 2\norm{L^2(T;\Real^2)}{\nabla (L^1_h\discrete{v}_h-\RTa\discrete{v}_T)}^2+2\norm{L^2(T;\Real^2)}{\nabla L^2_h\discrete{v}_h}^2\nonumber\\
    \overset{\eqref{eq:def.bT}}&\lesssim h_T^{-2}\norm{L^2(T)}{L^1_h\discrete{v}_h-\RTa\discrete{v}_T}^2+h_T^{-2}\norm{L^2(T)}{q_{\discrete{v},T}}^2\nonumber\\
    &\lesssim h_T^{-2}\norm{L^2(T)}{L^1_h\discrete{v}_h-\RTa\discrete{v}_T}^2  +h_T^{-2}\norm{L^2(T)}{\RTL\discrete{v}_T-L^1_h\discrete{v}_h}^2\nonumber\\
    &\lesssim h_T^{-2}\norm{L^2(T)}{L^1_h\discrete{v}_h-\RTa\discrete{v}_T}^2  +h_T^{-2}\norm{L^2(T)}{\RTL\discrete{v}_T-\RTa\discrete{v}_T}^2
    \label{eq:application:bound.Lh.RTa}
  \end{align}
  where, in the second line, we have additionally used H\"older inequalities on $(\nabla L^2_h\discrete{v}_h)_{|T}=b_T \nabla q_{\discrete{v},T}+q_{\discrete{v},T}\nabla b_T$ and inverse inequalities on the piecewise polynomial functions (see \cite[Remark 1.33]{Di-Pietro.Droniou:20}) $L^1_h\discrete{v}_h-\RTa\discrete{v}_T$ and $q_{\discrete{v},T}$, while, in the third line, we have used \eqref{eq:def.L2h} with $r=q_{\discrete{v},T}$ and the uniform equivalence of the $L^2(T)$-norm and the norm induced by the inner product $\langle\cdot,\cdot\rangle_{b,T}$. The conclusion was obtained writing $\RTL\discrete{v}_T-L^1_h\discrete{v}_h=(\RTL\discrete{v}_T-\RTa\discrete{v}_h)+(\RTa\discrete{v}_T-L^1_h\discrete{v}_h)$ and using a triangle inequality.

  A standard $L^2$- and DOF-norm equivalence for finite element functions in $\Poly{k+1}_c(\ST)$ (where $\ST$ is the local simplicial mesh obtained by considering the simplices of $\Sh$ contained in $T$) gives, with $\mathfrak N_T$ the Lagrange nodes in $\ST$,
  \begin{align}
    h_T^{-2}\norm{L^2(T)}{L^1_h\discrete{v}_h-\RTa\discrete{v}_T}^2&\lesssim
    h_T^{-2}\sum_{P\in\mathfrak N_T} |T|\,|L^1_h\discrete{v}_h(x_P)-\RTa\discrete{v}_T(x_P)|^2\nonumber\\
    \overset{\eqref{eq:def.L1h}}&=
    h_T^{-2}\sum_{P\in\mathfrak N_T\cap \partial T} |T|\,|\trrec{\partial T}\discrete{v}_h(x_P)-\RTa\discrete{v}_T(x_P)|^2\nonumber\\
    &\lesssim h_T^{-2}\sum_{P\in\mathfrak N_T\cap \partial T} |T|\,|\partial T|^{-1}\norm{L^2(\partial T)}{\trrec{\partial T}\discrete{v}_h-\RTa\discrete{v}_T}^2\nonumber\\
    &\lesssim h_T^{-1}\norm{L^2(\partial T)}{\trrec{\partial T}\discrete{v}_h-\RTa\discrete{v}_T}^2
    \overset{\eqref{eq:application:discrete.norm}}\lesssim \norm{\discrete{V}_T}{\discrete{v}_T}^2,
    \label{eq:application:est.L1h}
  \end{align}
  where the second inequality follows from an inverse Lebesgue inequality \cite[Lemma 1.25]{Di-Pietro.Droniou:20}, while the third inequality is a consequence of the mesh regularity property which ensures that $|T|\le h_T|\partial T|$.

  To estimate the last term in \eqref{eq:application:bound.Lh.RTa}, we write
  \begin{equation}
    h_T^{-2}\norm{L^2(T)}{\RTL\discrete{v}_T-\RTa\discrete{v}_h}^2\overset{\eqref{eq:RTL=pikT.RaT}}= h_T^{-2}\norm{L^2(T)}{\lproj{T}{k}(\RTa\discrete{v}_T)-\RTa\discrete{v}_T}^2
    \lesssim \norm{L^2(T;\Real^2)}{\nabla \RTa\discrete{v}_T}^2\overset{\eqref{eq:application:RTa:boundedness}}\lesssim \norm{\discrete{V}_T}{\discrete{v}_T}^2,
    \label{eq:application:bound.RTa}
  \end{equation}
  where the first inequality stems from the approximation properties of $\lproj{T}{k}$ (see, e.g., \cite[Theorem 1.45]{Di-Pietro.Droniou:20}).

  The proof of the bound \eqref{eq:Lh:boundedness} is completed by plugging \eqref{eq:application:est.L1h} and \eqref{eq:application:bound.RTa} into \eqref{eq:application:bound.Lh.RTa}, by using the resulting estimate in \eqref{eq:application:bound.Lh}, and by summing over $T\in\Th$.
\end{proof}

\begin{theorem}[Error estimate based on a conforming lifting]\label{thm:error.estimates.via.lifting}
  Assume that the solution to \eqref{eq:weak} satisfies $u\in H^{k+2}(\Th)$, that $f\in H^{k+1}(\Th)$, and that $\discrete{u}_h\in\discrete{V}_h$ is the solution to \eqref{eq:fully:discrete} with the local bilinear forms and global linear form defined by \eqref{eq:application:aT} and \eqref{eq:application:ell.h:RTL:discrete}, respectively.
  Then, it holds
  \begin{equation}\label{eq:error.est.via.lifting}
    \norm{\discrete{V}_h}{\discrete{u}_h-\dofmap_h u}\lesssim h^{k+1}\left(\seminorm{H^{k+2}(\Th)}{u}+\seminorm{H^{k+1}(\Th)}{f}\right).
  \end{equation}
\end{theorem}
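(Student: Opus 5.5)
The plan is to apply Theorem \ref{thm:discrete:error.estimate} with $\mathcal P_T=\Poly{k+1}(T)$ and $\widetilde{\mathcal P}_T=\Poly{k}(T)$. Its hypotheses are available. Lemma \ref{lem:application:lifting} provides a lifting satisfying Assumption \ref{ass:conforming.lifting}. The equivalence \eqref{eq:application:ah:coercivity.boundedness} gives local coercivity of $\discrete{a}_T$ in $\norm{\discrete{V}_T}{\cdot}$. Summing over $T$ yields global coercivity, hence \eqref{eq:discrete:inf-sup} (take $\discrete{v}_h=\discrete{w}_h$). Local boundedness \eqref{eq:discrete.a.T:boundedness} follows from the Cauchy--Schwarz inequality, since $\discrete{a}_T$ is symmetric positive semidefinite. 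It then remains to bound, element by element, the three quantities on the right-hand side of \eqref{eq:discrete:error.estimate}.

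For the right-hand side term, $\Pi_T^L=\lproj{T}{k}$, so standard approximation properties of the $L^2$-projector on star-shaped elements give
\[
\norm{L^2(T)}{f-\lproj{T}{k}f}\lesssim h_T^{k+1}\seminorm{H^{k+1}(T)}{f}.
\]
For the infimum over $w\in\Poly{k+1}(T)$, I will not optimize. Instead I choose $w=\Pi_T^a u$, which is legitimate since $u_{|T}\in H^{k+2}(T)$. Lemma \ref{lem:approx.PiTa} with $s=k+2$, $m=1$ then yields $\norm{V_{|T}}{u-w}=\seminorm{H^1(T)}{u-\Pi_T^a u}\lesssim h_T^{k+1}\seminorm{H^{k+2}(T)}{u}$.

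The main step is the discrete term $\norm{\discrete{V}_T}{\dofmap_T(u-\Pi_T^a u)}$. Set $v\coloneqq u-\Pi_T^a u$. The projector property $\Pi_T^a\circ\Pi_T^a=\Pi_T^a$ gives $\RTa\dofmap_T v=\Pi_T^a v=0$. By \eqref{eq:application:discrete.norm},
\[
\norm{\discrete{V}_T}{\dofmap_T v}^2=\norm{L^2(T;\Real^2)}{G_T\dofmap_T v}^2+h_T^{-1}\norm{L^2(\partial T)}{\trrec{\partial T}\dofmap_T v}^2 .
\]
\emph{Gradient term.} Polynomial consistency \eqref{eq:application:GT.RaT:polynomial.consistency} gives $G_T\dofmap_T v=G_T\dofmap_T u-\nabla\Pi_T^a u$. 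Adding and subtracting $\nabla u$, Lemma \ref{lem:approx.GT} with $m=0$ together with Lemma \ref{lem:approx.PiTa} with $m=1$ bound it by $h_T^{k+1}\seminorm{H^{k+2}(T)}{u}$.

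\emph{Boundary term.} Since $\trrec{\partial T}\dofmap_T v\in\Poly{k+1}(E)$ on each edge, \eqref{eq:estim.poly.edge} gives
\[
h_T^{-1}\norm{L^2(\partial T)}{\trrec{\partial T}\dofmap_T v}^2\simeq h_T^{-1}\sum_{E}\norm{L^2(E)}{\lproj{E}{k-1}v}^2+\sum_{V}|v(x_V)|^2 .
\]
The right-hand side is exactly $\discrete{s}_T(\dofmap_T v,\dofmap_T v)$, with $v=u-\Pi_T^a u$. The consistency estimate \eqref{eq:application:sT:consistency} bounds it by $h_T^{2(k+1)}\seminorm{H^{k+2}(T)}{u}^2$.

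Squaring, summing over $T$, and using $h_T\le h$ then gives \eqref{eq:error.est.via.lifting}. I expect the obstacle to be this discrete interpolation term: the key is recognizing that the choice $w=\Pi_T^a u$ kills the $\RTa$ contribution, which reduces the trace part to the already established stabilization consistency \eqref{eq:application:sT:consistency}.
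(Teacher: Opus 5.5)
Your proof is correct. Its overall structure matches the paper's: you apply Theorem \ref{thm:discrete:error.estimate}, bound the load term with $\lproj{T}{k}$, and expand the discrete norm of the interpolation defect. The difference is the choice of $w$ in the infimum, and this changes how the trace part of the discrete norm is handled.

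The paper takes $w=\lproj{T}{k+1}u$. This makes the $H^1$ term a standard $L^2$-projector estimate. However, $\RTa\dofmap_T(u-w)=\Pi_T^a u-\lproj{T}{k+1}u$ then does not vanish, so the trace part of $\norm{\discrete{V}_T}{\cdot}$ must be split by a triangle inequality. The paper then treats it with \eqref{eq:estim.poly.edge}, a discrete trace inequality on $\Pi_T^a u-\lproj{T}{k+1}u$, and the approximation properties of both $\Pi_T^a$ and $\lproj{T}{k+1}$, including edge traces and vertex values.

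Your choice $w=\Pi_T^a u$ uses the projector property to annihilate the potential reconstruction. The trace part then reduces, via \eqref{eq:estim.poly.edge}, to $\discrete{s}_T(\dofmap_T(u-\Pi_T^a u),\dofmap_T(u-\Pi_T^a u))$, and \eqref{eq:application:sT:consistency} already controls this. The identification is up to the harmless factor from each vertex being shared by two edges, so ``equivalent'' is more accurate than ``exactly''. The price is that your $H^1$ term relies on Lemma \ref{lem:approx.PiTa} rather than elementary projector estimates.

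The underlying ingredients are the same in both routes: continuous trace inequalities and the approximation properties of $\Pi_T^a$. Yours is shorter because it recycles an estimate the paper has already proved.
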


\begin{proof}
  By Lemma \ref{lem:application:lifting} we can apply Theorem \ref{thm:discrete:error.estimate}. The infimum is bounded above by the choice $w=\lproj{T}{k+1}u$ and the contributions involving $\seminorm{H^1(T)}{u-\lproj{T}{k+1}u}$ and $\norm{L^2(T)}{f-\lproj{T}{k}f}$ in \eqref{eq:discrete:error.estimate} can be directly estimated using the approximation properties of $L^2(T)$-orthogonal projectors on polynomial spaces (see \cite[Theorem 1.45]{Di-Pietro.Droniou:20}). It remains to bound $\norm{\discrete{V}_T}{\dofmap_T(u-\lproj{T}{k+1}u)}$ for all $T\in\Th$.
  To this purpose, we write
  \[
  \begin{aligned}
    \norm{\discrete{V}_T}{\dofmap_T(u-\lproj{T}{k+1}u)}
    \overset{\eqref{eq:application:discrete.norm},\eqref{eq:application:GT.RaT:polynomial.consistency}}&= \Big(\norm{L^2(T;\Real^2)}{G_T(\dofmap_T u)-\nabla\lproj{T}{k+1}u}^2
    \\
    &\quad + h_T^{-1}\norm{L^2(\partial T)}{(\RTa(\dofmap_T u)-\lproj{T}{k+1}u)-\trrec{\partial T}\dofmap_T(u-\lproj{T}{k+1}u)}^2\Big)^{\frac12}
    \\
    \overset{\eqref{eq:application:PiTa}}&\lesssim
    \norm{L^2(T;\Real^2)}{G_T(\dofmap_T u)-\nabla\lproj{T}{k+1}u}  +h_T^{-\frac12}\norm{L^2(\partial T)}{\Pi^a_T u-\lproj{T}{k+1}u}\\
    &\quad + h_T^{-\frac12}\sum_{E\in\ET}\norm{L^2(E)}{\lproj{E}{k-1}(u-\lproj{T}{k+1}u)}
    +\sum_{V\in\VT}|u(x_V)-(\lproj{T}{k+1}u)(x_V)|,
  \end{aligned}
  \]
  where, in the conclusion, we have additionally used \eqref{eq:estim.poly.edge} on each $\trrec{\partial T}\dofmap_T(u-\lproj{T}{k+1}u)_{|E}$ for $E\in\ET$ together with the definitions of $\trrec{\partial T}$ and $\sigma_T$, which show that, for all $v\in C(\overline{T})$,
  $\lproj{E}{k-1}\trrec{E}(\sigma_T v)=\lproj{E}{k-1} v$ for all $E\in\ET$
  and $\trrec{E}(\sigma_T v)(x_V)=v(x_V)$ for all $V\in\VT$.
  A discrete trace inequality on $\Pi^a_T u-\lproj{T}{k+1}u$ and the $L^2(E)$-boundedness of $\lproj{E}{k-1}$ then yield
  \[
  \begin{aligned}
    \norm{\discrete{V}_T}{\dofmap_T(u-\lproj{T}{k+1}u)}
    &\lesssim
    \norm{L^2(T;\Real^2)}{G_T(\dofmap_T u)-\nabla\lproj{T}{k+1}u} +h_T^{-1}\norm{L^2(T)}{\Pi^a_T u-\lproj{T}{k+1}u}\\
    &\quad
    +h_T^{-\frac12}\sum_{E\in\ET}\norm{L^2(E)}{u-\lproj{T}{k+1}u}
    +\sum_{V\in\VT}|u(x_V)-(\lproj{T}{k+1}u)(x_V)|.
  \end{aligned}
  \]
  The approximation properties of $G_T$, $\Pi^a_T$ and $\lproj{T}{k+1}$ (see Lemmas \ref{lem:approx.GT} and \ref{lem:approx.PiTa}, and \cite[Theorem 1.45]{Di-Pietro.Droniou:20}) show that the right-hand side about is $\lesssim h_T^{k+1}\seminorm{H^{k+2}(T)}{u}$, which concludes the proof.
\end{proof}

\subsection{Estimate of the consistency error by direct manipulations}\label{sec:application:direct.manipulations}

In this section we showcase the last technique to derive estimates of the consistency error, based on direct manipulations of it.
Specifically, we provide a direct proof of the following lemma, which, combined with Theorem \ref{thm:discrete:error.estimate}, gives a direct proof of the error estimate stated in Theorem \ref{thm:error.estimates.via.lifting}.

\begin{lemma}[Estimate of the consistency error]\label{lem:Eh:discrete.norm.estimate}
  Denote by $u$ the solution to the variational problem \eqref{eq:weak} with spaces and forms defined by \eqref{eq:poisson} and let $\mathfrak{E}_h : \discrete{V}_h \to \Real$ be the linear form such that
  \begin{equation}\label{eq:application:Eh}
    \mathfrak{E}_h(\discrete{v}_h)
    \coloneqq -\sum_{T \in \Th} \int_T (\operatorname{div} \nabla u) \, \RTa \discrete{v}_T
    - \discrete{a}_h(\dofmap_h u, \discrete{v}_h)
    \qquad \forall \discrete{v}_h \in \discrete{V}_h.
  \end{equation}
  Then, assuming $u \in H^{k+2}(\Th)$, it holds
  \begin{equation}\label{eq:application:tEh:discrete.norm.estimate}
    \norm{\discrete{V}_h'}{\mathfrak{E}_h}
    \lesssim h^{k+1} \seminorm{H^{k+2}(\Th)}{u}.
  \end{equation}
  As a consequence, additionally assuming $f \in H^k(\Th)$,
  \begin{equation}\label{eq:application:Eh:discrete.norm.estimate}
    \norm{\discrete{V}_h'}{\mathcal{E}_h}
    \lesssim h^{k+1} \left(
    \seminorm{H^{k+2}(\Th)}{u} + \seminorm{H^k(\Th)}{f}
    \right).
  \end{equation}
\end{lemma}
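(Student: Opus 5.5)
We will prove \eqref{eq:application:tEh:discrete.norm.estimate} by manipulating $\mathfrak{E}_h$ element by element. The idea is to reduce it to a boundary term that contains the difference $\RTa\discrete{v}_T-\trrec{\partial T}\discrete{v}_T$, which the discrete norm \eqref{eq:application:discrete.norm} controls. Fix $\discrete{v}_h\in\discrete{V}_h$ and write $\discrete{a}_T(\dofmap_T u,\discrete{v}_T)$ using \eqref{eq:application:aT}. It splits into a consistent term $\int_T G_T\dofmap_T u\cdot G_T\discrete{v}_T$ and a stabilisation term.

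For the stabilisation term, note that $\dofmap_T u-\dofmap_T\RTa\dofmap_T u=\dofmap_T(u-\Pi_T^a u)$. By Cauchy--Schwarz for the positive semidefinite form $\discrete{s}_T$, the consistency estimate \eqref{eq:application:sT:consistency}, and the bound $\discrete{s}_T(\discrete{v}_T-\dofmap_T\RTa\discrete{v}_T,\cdot)^{1/2}\lesssim\norm{\discrete{V}_T}{\discrete{v}_T}$ (a consequence of \eqref{eq:application:ah:coercivity.boundedness}), this term is $\lesssim h_T^{k+1}\seminorm{H^{k+2}(T)}{u}\norm{\discrete{V}_T}{\discrete{v}_T}$.

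For the consistent term, we insert $\nabla u$ and write $G_T\dofmap_T u=\lproj{T}{k}\nabla u+(G_T\dofmap_T u-\lproj{T}{k}\nabla u)$. The difference in parentheses is bounded in $L^2$ by Lemma \ref{lem:approx.GT} with $m=0$ together with the approximation properties of $\lproj{T}{k}$. Pairing it with $G_T\discrete{v}_T$, which is controlled by $\norm{\discrete{V}_T}{\discrete{v}_T}$, gives an $O(h_T^{k+1})$ contribution.

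The main step concerns the remaining piece $\int_T \lproj{T}{k}\nabla u\cdot G_T\discrete{v}_T$. Since $\tau\coloneq\lproj{T}{k}\nabla u\in\Poly{k}(T;\Real^2)$, Remark \ref{eq:application:RTa:validity} lets us use \eqref{eq:application:RTa} with this $\tau$:
\[
\int_T \lproj{T}{k}\nabla u\cdot G_T\discrete{v}_T=-\int_T\RTa\discrete{v}_T\operatorname{div}\tau+\int_{\partial T}\trrec{\partial T}\discrete{v}_T\,(\tau\cdot n_T).
\]
On the other hand, integrating by parts the first term of $\mathfrak{E}_h$ gives
\[
-\int_T\operatorname{div}(\nabla u)\,\RTa\discrete{v}_T=\int_T\nabla u\cdot\nabla\RTa\discrete{v}_T-\int_{\partial T}(\nabla u\cdot n_T)\RTa\discrete{v}_T .
\]
We also integrate the right-hand side of the first identity back by parts, which turns it into $\int_T\tau\cdot\nabla\RTa\discrete{v}_T-\int_{\partial T}(\tau\cdot n_T)(\RTa\discrete{v}_T-\trrec{\partial T}\discrete{v}_T)$. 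Subtracting, the element contribution becomes
\[
\int_T(\nabla u-\lproj{T}{k}\nabla u)\cdot\nabla\RTa\discrete{v}_T-\int_{\partial T}\big((\nabla u-\lproj{T}{k}\nabla u)\cdot n_T\big)(\RTa\discrete{v}_T-\trrec{\partial T}\discrete{v}_T)-\int_{\partial T}(\nabla u\cdot n_T)\,\trrec{\partial T}\discrete{v}_T,
\]
plus the two $O(h^{k+1})$ terms already treated. The volume term vanishes, since $\nabla\RTa\discrete{v}_T\in\Poly{k}(T;\Real^2)$ is $L^2$-orthogonal to $\nabla u-\lproj{T}{k}\nabla u$.

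The last boundary term disappears after summation over $T$. Indeed, $\trrec{\skel}\discrete{v}_h$ is single-valued on each edge and vanishes on $\partial\Omega$ because $\discrete{v}_h\in\discrete{V}_h$. Moreover, $\nabla u\cdot n$ has continuous normal traces, since $\operatorname{div}\nabla u=-f\in L^2(\Omega)$ and $u\in H^{k+2}(\Th)$, so the interface contributions cancel. The middle term is bounded by Cauchy--Schwarz: the continuous trace inequality and the approximation properties of $\lproj{T}{k}$ give $h_T^{1/2}\norm{L^2(\partial T)}{\nabla u-\lproj{T}{k}\nabla u}\lesssim h_T^{k+1}\seminorm{H^{k+2}(T)}{u}$, and the other factor satisfies $h_T^{-1/2}\norm{L^2(\partial T)}{\RTa\discrete{v}_T-\trrec{\partial T}\discrete{v}_T}\le\norm{\discrete{V}_T}{\discrete{v}_T}$. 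Summing over $T$ with a discrete Cauchy--Schwarz inequality yields \eqref{eq:application:tEh:discrete.norm.estimate}. The main obstacle is this boundary bookkeeping, in particular the justification of the normal-flux cancellation.

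For \eqref{eq:application:Eh:discrete.norm.estimate}, we use $f=-\operatorname{div}\nabla u$ and \eqref{eq:RTL=pikT.RaT} to write
\[
\mathcal{E}_h(\discrete{v}_h)-\mathfrak{E}_h(\discrete{v}_h)=\sum_T\int_T f(\lproj{T}{k}\RTa\discrete{v}_T-\RTa\discrete{v}_T)=\sum_T\int_T(\lproj{T}{k}f-f)(\RTa\discrete{v}_T-\lproj{T}{k}\RTa\discrete{v}_T).
\]
This is bounded by $\sum_T h_T^{k}\seminorm{H^k(T)}{f}\,h_T\norm{L^2(T;\Real^2)}{\nabla\RTa\discrete{v}_T}$, using the approximation properties of $\lproj{T}{k}$ on both factors. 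We then conclude with \eqref{eq:application:RTa:boundedness} and a discrete Cauchy--Schwarz inequality.
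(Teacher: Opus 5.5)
Your proof is correct, but its central step differs from the paper's. The paper applies \eqref{eq:application:RTa} with $\tau=G_T(\dofmap_T u)$ rather than $\tau=\lproj{T}{k}\nabla u$.

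\emph{The paper's route.} This choice cancels the consistent term $\int_T G_T(\dofmap_T u)\cdot G_T\discrete{v}_T$ exactly, with no preliminary splitting. It leaves a volume term $\int_T(\nabla u-G_T(\dofmap_T u))\cdot\nabla\RTa\discrete{v}_T$ alongside the boundary term $\int_{\partial T}(G_T(\dofmap_T u)-\nabla u)\cdot n_T\,(\RTa\discrete{v}_T-\trrec{\partial T}\discrete{v}_T)$. Bounding these requires the boundedness \eqref{eq:application:RTa:boundedness} of $\nabla\RTa$. Through the continuous trace inequality, it also requires the approximation properties \eqref{eq:application:GT:approximation} of $G_T$ for both $m=0$ and $m=1$.

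\emph{Your route.} The extra term $\int_T(G_T(\dofmap_T u)-\lproj{T}{k}\nabla u)\cdot G_T\discrete{v}_T$ only needs the case $m=0$. The volume term vanishes by $L^2$-orthogonality, since $\nabla\RTa\discrete{v}_T\in\Poly{k}(T;\Real^2)$. The boundary term only involves the standard approximation properties of $\lproj{T}{k}$. Hence your proof of \eqref{eq:application:tEh:discrete.norm.estimate} uses neither the $H^1$-seminorm estimate on $G_T$ nor \eqref{eq:application:RTa:boundedness}.

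The paper's argument is more compact (one identity, no splitting), while yours rests on slightly more elementary ingredients. The normal-flux cancellation on the skeleton, the handling of the stabilization term, and the derivation of \eqref{eq:application:Eh:discrete.norm.estimate} are the same as in the paper.
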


\begin{remark}[Regularity of loading term]\label{rem:load:ort}
  The estimate \eqref{eq:application:Eh:discrete.norm.estimate} only requires $f \in H^k(\Th)$, as opposed to $f \in H^{k+1}(\Th)$ in \eqref{eq:vem:conv} and \eqref{eq:error.est.via.lifting}.
  To lower the regularity requirement in these estimates, obtained through Theorems \ref{thm:vem:error.estimate} and \ref{thm:discrete:error.estimate}, the following  assumption should be added to the abstract frameworks:
  \[
  \norm{L_{|T}}{w_T-\Pi^L_Tw_T}\lesssim h_T \norm{V_{|T}}{w_T}\qquad\forall T\in\Th,
  \]
  for all $w_T\in\virtual{V}_T$ in the virtual framework of Section \ref{sec:abstract.framework:vem}, or all $w_T \in (L_h \discrete{V}_h)_{|T}$ for the fully discrete framework of Section \ref{sec:abstract.framework:fully.discrete}.
  Then, using the same trick as in \eqref{eq:handling.rhs} below, we could replace the terms $\norm{L_{|T}}{f-\Pi^L_Tf}^2$ in \eqref{eq:vem:error.estimate} and \eqref{eq:discrete:error.estimate} by $h_T^2\norm{L_{|T}}{f-\Pi^L_Tf}^2$, and $f \in H^k(\Th)$ would then be sufficient to obtain an estimate in $h_T^{2(k+1)}$ for this term.
\end{remark}

\begin{remark}[Alternative choice of the forcing term]\label{rem:rhs}
  The estimate \eqref{eq:application:tEh:discrete.norm.estimate} indicates that we can obtain an optimally convergent scheme also with the following discretization of the forcing term:
  \begin{equation}\label{eq:application:ell.h:RTa:discrete}
    \ell_h(\discrete{v}_h)
    = \sum_{T \in \Th} \int_T f \, \RTa \discrete{v}_T
    \qquad \forall \discrete{v}_T \in \discrete{V}_T,
  \end{equation}
  where the difference with respect to \eqref{eq:application:ell.h:RTL:discrete} is that the reconstruction $\RTa$ is used instead of $\RTL$.
  With this choice, the consistency error of \eqref{eq:fully:discrete} simply is $\mathcal E_h=\mathfrak E_h$, and combining \eqref{eq:application:tEh:discrete.norm.estimate} with Theorem \ref{thm:error.estimate} provides an error estimate without requiring any regularity on $f$. The natural choice \eqref{eq:application:ell.h:RTa:discrete} is, however, not immediately suggested by the abstract analysis frameworks of Section \ref{sec:framework}, incidentally showing the interest of direct estimates also at the design stage.

  An increased flexibility on the forcing term can be additionally used, e.g., to reproduce physically relevant properties such as flux conservation (see \cite[Remark 2.22]{Di-Pietro.Droniou:20}) or pressure-robustness (see \cite{Linke:14} and also \cite{Beirao-da-Veiga.Dassi.ea:22} concerning polytopal methods linked to the one presented here).
\end{remark}

\begin{proof}[Proof of Lemma \ref{lem:Eh:discrete.norm.estimate}]
  By \eqref{eq:application:RTa} with $\tau = G_T(\dofmap_T u)$ (see Remark \ref{eq:application:RTa:validity}), it holds, for all $\discrete{v}_h \in \discrete{V}_h$,
  \[
  \sum_{T \in \Th} \left[
    \int_T \operatorname{div} (G_T(\dofmap_T u)) \, \RTa \discrete{v}_T
    + \int_T G_T(\dofmap_T u) \cdot G_T \discrete{v}_T
    - \int_{\partial T} (G_T(\dofmap_T u) - \nabla u) \cdot n_T \, \trrec{\partial T} \discrete{v}_T
    \right]
  = 0,
  \]
  where we have additionally used the fact that the normal component of $\nabla u$ is continuous across interfaces together with the fact that $\trrec{E} \discrete{v}_E = 0$ for all $E \in \Eh^{\rm b}$ (since the boundary components of $\discrete{v}_h \in \discrete{V}_h$ vanish) to insert $\nabla u \cdot n_T$ into the last term.
  Integrating by parts the first term, we get
  \begin{multline*}
    \sum_{T \in \Th} \Bigg[
      - \int_T G_T(\dofmap_T u) \cdot \nabla \RTa \discrete{v}_T
      + \sum_{T \in \Th} \int_{\partial T} (G_T(\dofmap_T u) \cdot n_T) \RTa \discrete{v}_T
      \\
      + \int_T G_T(\dofmap_T u) \cdot G_T \discrete{v}_T
      - \int_{\partial T} (G_T(\dofmap_T u) - \nabla u) \cdot n_T \, \trrec{\partial T} \discrete{v}_T
      \Bigg]
    = 0.
  \end{multline*}
  Adding this expression to \eqref{eq:application:Eh} after integrating by parts element-wise the first term in the right-hand side,
  and expanding $\discrete{a}_h$ and then each $\discrete{a}_T$, $T \in \Th$, according to the respective definitions \eqref{eq:discrete:ah} and \eqref{eq:application:aT}, we get
  \begin{multline*}
    \mathfrak{E}_h(\discrete{v}_h)
    = \sum_{T \in \Th} \left[
      \int_T (\nabla u - G_T(\dofmap_T u)) \cdot \nabla \RTa \discrete{v}_T
      + \int_{\partial T} (G_T(\dofmap_T u)-\nabla u) \cdot n_T \, (\RTa \discrete{v}_T - \trrec{\partial T} \discrete{v}_T)
      \right]
    \\
    - \sum_{T \in \Th} \discrete{s}_T(\dofmap_T (u - \Pi_T^a u), \discrete{v}_T - \dofmap_T \RTa \discrete{v}_T).
  \end{multline*}
  Applying Cauchy--Schwarz and continuous trace inequalities, we obtain
  \[
  \begin{aligned}
    \mathfrak{E}_h(\discrete{v}_h)
    &\lesssim \left[
      \sum_{T \in \Th} \left(
      \norm{L^2(T;\Real^2)}{\nabla u - G_T(\dofmap_T u)}^2
      + h_T^2 \seminorm{H^1(T;\Real^2)}{\nabla u - G_T(\dofmap_T u)}^2
      \right)
      \right]^{\frac12}
    \\
    &\quad \times \left[
      \sum_{T \in \Th} \left(
      \norm{L^2(T;\Real^2)}{\nabla \RTa \discrete{v}_T}^2
      + h_T^{-1} \norm{L^2(\partial T)}{\RTa \discrete{v}_T - \trrec{\partial T} \discrete{v}_T}^2
      \right)
      \right]^{\frac12}
    \\
    &\quad + \left(
    \sum_{T \in \Th} \discrete{s}_T(\dofmap_T (u - \Pi_T^a u), \dofmap_T (u - \Pi_T^a u))
    \right)^{\frac12} \norm{\discrete{V}_h}{\discrete{v}_h}
    \\
    \overset{\eqref{eq:application:GT:approximation},\eqref{eq:application:RTa:boundedness},\eqref{eq:application:discrete.norm},\eqref{eq:application:sT:consistency},\eqref{eq:application:ah:coercivity.boundedness}}&\lesssim
    h^{k+1} \seminorm{H^2(\Th)}{u} \norm{\discrete{V}_h}{\discrete{v}_h}.
  \end{aligned}
  \]
  Passing to the supremum over $\discrete{v}_h \in \discrete{V}_h \setminus \{ \discrete{0}\}$ after having divided by $\norm{\discrete{V}_h}{\discrete{v}_h}$, \eqref{eq:application:tEh:discrete.norm.estimate} follows.

  We next notice that, for all $\discrete{v}_h \in \discrete{V}_h$,
  \begin{equation}\label{eq:handling.rhs}
    \begin{aligned}
      \mathcal{E}_h(\discrete{v}_h)
      &= \mathfrak{E}_h(\discrete{v}_h)
      - \sum_{T \in \Th} \int_T f (\RTa \discrete{v}_T - \RTL \discrete{v}_T)
      \\
      \overset{\eqref{eq:RTL=pikT.RaT}}&=
      \mathfrak{E}_h(\discrete{v}_h)
      - \sum_{T \in \Th} \int_T (f - \lproj{T}{k} f) (\RTa \discrete{v}_T - \lproj{T}{k} \RTa \discrete{v}_T)
      \\
      &\le
      \mathfrak{E}_h(\discrete{v}_h)
      + \left(
      \sum_{T \in \Th} \norm{L^2(T)}{f - \lproj{T}{k} f}^2\right)^{\frac12}
      \left(
      \sum_{T \in \Th} \norm{L^2(T)}{\RTa \discrete{v}_T - \lproj{T}{k} \RTa \discrete{v}_T}^2
      \right)^{\frac12}
      \\
      &\le
      \mathfrak{E}_h(\discrete{v}_h)
      + h^{k+1} \norm{H^k(\Th)}{f} \norm{\discrete{V}_h}{\discrete{v}_h},
    \end{aligned}
  \end{equation}
  where we have additionally used the orthogonality properties of $\lproj{T}{k}$ in the second step, Cauchy--Schwarz inequalities in the third step,
  while the conclusion follows using local approximation estimates for $\lproj{T}{k}$ together with a local Poincar\'e inequality and the $H^1$-boundedness of $\lproj{T}{k}$ to write ${\norm{L^2(T)}{\RTa \discrete{v}_T - \lproj{T}{k} \RTa \discrete{v}_T}} \lesssim h_T \norm{L^2(T)^2}{\nabla \RTa \discrete{v}_T} \lesssim h_T \norm{\discrete{V}_T}{\discrete{v}_T}$.
  Passing to the dual norm concludes the proof of \eqref{eq:application:Eh:discrete.norm.estimate}.
\end{proof}


\appendix

\section{A scaled trace lemma}\label{app:scaled.trace}

In this section we show the proof of the following trace lemma.

\begin{lemma}\label{lem:scal:trace}
  Let $T \in \Th$, satisfying the mesh assumptions in Section \ref{sec:mesh}. Recalling that $\tnorm{H^{-\frac12}(\partial T)}{\cdot}$ represents the dual norm to $\tnorm{H^{\frac12}(\partial T)}{\cdot}=h_T^{-\frac12}\norm{L^2(\partial T)}{\cdot}+\seminorm{H^{\frac12}(\partial T)}{\cdot}$, it holds
  $$
  \tnorm{H^{-\frac12}(\partial T)}{{\bf w}\cdot n_T}
  \lesssim
  \norm{L^2(T)}{{\bf w}}
  + h_T \norm{L^2(T)}{{\rm div}\,{\bf w}}
  \qquad \forall {\bf w} \in H_{\rm div}(T).
  $$
\end{lemma}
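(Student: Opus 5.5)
The plan is to argue by duality, using the Green formula on $T$ and a scaled extension (right inverse of the trace) from $\partial T$ into $T$. By definition of the dual norm,
\[
\tnorm{H^{-\frac12}(\partial T)}{{\bf w}\cdot n_T}=\sup_{g\in H^{\frac12}(\partial T)\setminus\{0\}}\frac{\langle {\bf w}\cdot n_T,g\rangle_{\partial T}}{\tnorm{H^{\frac12}(\partial T)}{g}}.
\]
For each such $g$ we will pick an extension $\phi\in H^1(T)$ with $\phi_{|\partial T}=g$. The Green formula in $H_{\rm div}(T)\times H^1(T)$ gives
\[
\langle {\bf w}\cdot n_T,g\rangle_{\partial T}=\int_T {\bf w}\cdot\nabla\phi+\int_T ({\rm div}\,{\bf w})\,\phi .
\]
Cauchy--Schwarz then yields
\[
\langle {\bf w}\cdot n_T,g\rangle_{\partial T}\le \big(\norm{L^2(T)}{{\bf w}}+h_T\norm{L^2(T)}{{\rm div}\,{\bf w}}\big)\big(\norm{L^2(T)}{\nabla\phi}+h_T^{-1}\norm{L^2(T)}{\phi}\big).
\]
So everything reduces to constructing an extension with the scaled stability bound
\begin{equation*}
\norm{L^2(T)}{\nabla\phi}+h_T^{-1}\norm{L^2(T)}{\phi}\lesssim h_T^{-\frac12}\norm{L^2(\partial T)}{g}+\seminorm{H^{\frac12}(\partial T)}{g},
\end{equation*}
with a hidden constant depending only on $\varrho$.

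To obtain the extension, we first rescale $T$ by the map $x\mapsto (x-x_T)/h_T$. This produces a polygon $\widehat T$ of unit diameter, star-shaped with respect to the ball of radius $\varrho$ centred at the origin, with edges of length $\ge\varrho$. Under this scaling the two sides of the bound transform homogeneously in $\Real^2$: $\norm{L^2}{\nabla\phi}$ and $\seminorm{H^{\frac12}(\partial T)}{g}$ are scale invariant, while $h_T^{-1}\norm{L^2(T)}{\phi}$ and $h_T^{-\frac12}\norm{L^2(\partial T)}{g}$ scale identically. It therefore suffices to prove the unscaled inequality $\norm{H^1(\widehat T)}{\widehat\phi}\lesssim\norm{H^{\frac12}(\partial\widehat T)}{\widehat g}$ on $\widehat T$.

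The main obstacle is showing that this inequality holds uniformly over the family of such reference polygons. A star-shaped domain with respect to a ball of radius $\varrho$ inside a unit ball is Lipschitz, with Lipschitz character controlled by $\varrho$ alone. Concretely, the radial map $\widehat x\mapsto \widehat x/|\widehat x|\,r(\widehat x/|\widehat x|)$, where $r$ is the Lipschitz radial function of $\partial\widehat T$, is bi-Lipschitz with constants depending only on $\varrho$. It maps the unit disk onto $\widehat T$ and the circle onto $\partial\widehat T$. We will therefore pull $\widehat g$ back to the unit circle, where the $H^{\frac12}$ norm changes by at most a $\varrho$-dependent factor because the Gagliardo seminorm is stable under bi-Lipschitz maps. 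We then apply the standard bounded right inverse of the trace on the disk, for instance the harmonic extension, and push the result forward to $\widehat T$. Since $H^1$ is likewise stable under bi-Lipschitz changes of variables, this gives the required uniform extension bound, and scaling back concludes the proof. As an alternative, should the radial argument prove awkward, one could use the regular sub-triangulation $\ST$, extend edgewise on each boundary triangle, and glue the pieces; this is, however, more technical because of the nonlocal $H^{\frac12}$ seminorm.
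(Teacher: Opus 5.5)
Your proof is correct and follows the same route as the paper's. Both arguments take the dual-norm definition, lift the boundary datum to $H^1(T)$ with the scaled bound $\norm{L^2(T)}{\nabla\phi}+h_T^{-1}\norm{L^2(T)}{\phi}\lesssim\tnorm{H^{\frac12}(\partial T)}{g}$, apply the Green formula, and conclude by Cauchy--Schwarz. The only difference is that the paper takes this lifting from (a trivial extension of) Lemma~6.1 in \cite{Beirao-da-Veiga.Lovadina.ea:17}, whereas you build it explicitly via scaling, the radial bi-Lipschitz map onto the unit disk, and harmonic extension, which is a valid self-contained justification.
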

\begin{proof}
  By a trivial extension of the proof of Lemma 6.1 in \cite{Beirao-da-Veiga.Lovadina.ea:17}, it follows that for each $\varphi \in H^{1/2}(\partial T)$ there exists a lifting $\widetilde{\varphi} \in H^{1}(T)$ (i.e. $\widetilde{\varphi}|_{\partial T} = \varphi$) satisfying
  \begin{equation*} 
    h_T^{-1} \norm{L^2(T)}{\widetilde{\varphi}}
    + \seminorm{H^1(T)}{\widetilde{\varphi}}
    \lesssim
    \tnorm{H^{\frac12}(\partial T)}{\varphi},
  \end{equation*}
  with hidden constant only dependent on the shape regularity constant of $T$.
  Using first the definition of the dual norm, then introducing the above lifting and applying an integration by parts, we obtain
  $$
  \begin{aligned}
    \tnorm{H^{-\frac12}(\partial T)}{{\bf w}\cdot n_T} & =
    \sup_{\varphi \in H^{\frac12}(\partial T)} \frac{\langle{\bf w} \cdot n_T,\varphi \rangle_{\partial T}}{\tnorm{H^{\frac12}(\partial T)}{\varphi}}
    \lesssim \sup_{\varphi \in H^{\frac12}(\partial T)} \frac{\langle{\bf w} \cdot n_T,\widetilde{\varphi} \rangle_{\partial T}}{h_T^{-1} \norm{L^2(T)}{\widetilde{\varphi}}
      + \seminorm{H^1(T)}{\widetilde{\varphi}}}  \\
    & = \sup_{\varphi \in H^{\frac12}(\partial T)} \frac{\int_{T} ({\bf w} \cdot \nabla \widetilde{\varphi} + \widetilde{\varphi} \, {\rm div}{\bf w} )}{h_T^{-1} \norm{L^2(T)}{\widetilde{\varphi}}
      + \seminorm{H^1(T)}{\widetilde{\varphi}}}
    \, ,
  \end{aligned}
  $$
  where the suprema are taken on non vanishing functions.
  Cauchy-Schwarz inequalities conclude the proof.
\end{proof}

\section{Adjoint consistency error}\label{sec:adjoint.consistency}

Let us consider the following portion of an $L^2$-Hilbert complex:
\begin{equation}\label{eq:complex:continuous}
  \begin{tikzcd}[sep=large]
    V^{i-1} \arrow[r,"d^{i-1}"]
    & V^{i-1} \arrow[r,"d^i"]
    & V^{i+1}
  \end{tikzcd}
\end{equation}
as well as its approximation in the spirit of Section \ref{sec:abstract.framework:fully.discrete} depicted in the following diagram:
\begin{equation}\label{eq:complex}
  \begin{tikzcd}[sep=large]
    V_I^{i-1} \arrow[d,"\dofmap_h^{i-1}"]
    & V_I^i \arrow[d,"\dofmap_h^i"]
    & V_I^{i+1} \arrow[d,"\dofmap_h^{i+1}"]
    \\
    \discrete{V}_h^{i-1} \arrow[r,"\discrete{d}_h^{i-1}"]
    & \discrete{V}_h^i \arrow[r,"\discrete{d}_h^i"]
    & \discrete{V}_h^{i+1}.
  \end{tikzcd}
\end{equation}
Above, $V_I \subset V$ denotes the domain of the interpolator and the graded map $\discrete{d}_h$ is the discrete differential, for which we assume the following commutation property with the continuous differential:
For all $v \in V_I^{i-1}$ such that $d^{i-1} v \in V_I^i$,
\begin{equation}\label{eq:complex:cochain.map}
  \discrete{d}_h^{i-1} \dofmap_h^{i-1} v
  = \dofmap_h^i d^{i-1} v.
\end{equation}

Assume, for the sake of simplicity, that the space of harmonic $i$-forms is trivial.
The mixed formulation of the Hodge Laplacian for the portion of the continuous complex \eqref{eq:complex:continuous} consists in finding $(u, p) \in V^{i-1} \times V^i$ such that
\begin{equation}\label{eq:complex:hodge.laplacian:continuous}
  \begin{alignedat}{2}
    (u, v)_{i-1} - (p, d^{i-1} v)_i &= 0
    &\qquad& \forall v \in V^{i-1},
    \\
    (d^{i-1} u, q)_i
    + (d^i p, d^i q)_{i+1}
    &= (g, q)_i
    &\qquad& \forall q \in V^i,
  \end{alignedat}
\end{equation}
where $(\cdot,\cdot)_j$ denotes the $L^2$-product for $V^j$ and
$g$ is a forcing term with appropriate regularity.
Let each space $\discrete{V}_h^j$ be equipped with a discrete $L^2$-product $(\cdot,\cdot)_{j,h}$.
The discretization of \eqref{eq:complex:hodge.laplacian:continuous} based on the discrete complex depicted in \eqref{eq:complex} then reads:
Find $(\discrete{u}_h, \discrete{p}_h) \in \discrete{V}_h^{i-1} \times \discrete{V}_h^i$ such that
\begin{subequations}\label{eq:complex:hodge.laplacian}
  \begin{alignat}{2}
    \label{eq:complex:hodge.u.deltap}
    (\discrete{u}_h, \discrete{v}_h)_{i-1,h}
    - (\discrete{p}_h, \discrete{d}_h^{i-1} \discrete{v}_h)_{i,h}
    &= 0
    &\qquad& \forall \discrete{v}_h \in \discrete{V}_h^{i-1},
    \\
    (\discrete{d}_h^{i-1} \discrete{u}_h, \discrete{q}_h)_{i,h}
    + (\discrete{d}_h^i \discrete{p}_h, \discrete{d}_h^i \discrete{q}_h)_{i+1,h}
    &= \ell_h(\discrete{q}_h)
    &\qquad& \forall \discrete{q}_h \in \discrete{V}_h^i.
  \end{alignat}
\end{subequations}
An equivalent variational formulation is:
Find $(\discrete{u}_h, \discrete{p}_h) \in \discrete{V}_h^{i-1} \times \discrete{V}_h^i$ such that
\[
\discrete{a}_h( (\discrete{u}_h, \discrete{p}_h), (\discrete{v}_h, \discrete{q}_h) )
= \ell_h(\discrete{q}_h)
\qquad \forall (\discrete{v}_h, \discrete{q}_h) \in \discrete{V}_h^{i-1} \times \discrete{V}_h^i,
\]
with bilinear form $\discrete{a}_h : \left[ \discrete{V}_h^{i-1} \times \discrete{V}_h^i \right]^2 \to \Real$ such that, for all $(\discrete{w}_h, \discrete{r}_h), (\discrete{v}_h, \discrete{q}_h) \in \discrete{V}_h^{i-1} \times \discrete{V}_h^i$,
\[
\discrete{a}_h( (\discrete{w}_h, \discrete{r}_h), (\discrete{v}_h, \discrete{q}_h) )
\coloneqq
(\discrete{w}_h, \discrete{v}_h)_{i-1,h}
- (\discrete{r}_h, \discrete{d}_h^{i-1} \discrete{v}_h)_{i,h}
+ (\discrete{d}_h^{i-1} \discrete{w}_h, \discrete{q}_h)_{i,h}
+ (\discrete{d}_h^i \discrete{r}_h, \discrete{d}_h^i \discrete{q}_h)_{i+1,h}.
\]
Assuming the additional regularity $(u,p) \in V_I^{i-1} \times V_I^i$ for the solution to \eqref{eq:complex:hodge.laplacian:continuous}, the consistency error defined according to \eqref{eq:consistency.error} reads
\[
\mathcal{E}_h(\discrete{v}_h, \discrete{q}_h)
= \ell_h(\discrete{q}_h)
- \discrete{a}_h( (\dofmap_h^{i-1} u, \dofmap_h^i p), (\discrete{v}_h, \discrete{q}_h) ).
\]

Assume now that $g$ is smooth enough for the following choice to make sense:
\[
\ell_h(\discrete{q}_h)
= (\dofmap_h^i g, \discrete{q}_h)_{i,h}
\qquad \forall \discrete{q}_h \in \discrete{V}_h^i.
\]
Then, recalling that $g = d^{i-1} u + \delta^{i+1} d^i p$ almost everywhere, with $\delta$ denoting the codifferential, and using the commutation property \eqref{eq:complex:cochain.map} to perform the cancellations, we have
\[
\begin{aligned}
  \mathcal{E}_h(\discrete{v}_h, \discrete{q}_h)
  &= \cancel{(\dofmap_h^i d^{i-1} u, \discrete{q}_h)_{i,h}}
  + (\dofmap_h^i \delta^{i+1} d^i p, \discrete{q}_h)_{i,h}
  \\
  &\quad
  - (\dofmap_h^{i-1} u, \discrete{v}_h)_{i-1,h}
  + (\dofmap_h^i p, \discrete{d}_h^{i-1} \discrete{v}_h)_{i,h}
  \\
  &\quad
  - \cancel{(\discrete{d}_h^{i-1} \dofmap_h^{i-1} u, \discrete{q}_h)_{i,h}}
  - (\discrete{d}_h^{i} \dofmap_h^i p, \discrete{d}_h^{i} \discrete{q}_h)_{i+1,h}
  \\
  &=
  (\dofmap_h^i \delta^{i+1} d^i p, \discrete{q}_h)_{i,h}
  - (\dofmap_h^{i-1} \delta^i p, \discrete{v}_h)_{i-1,h}
  + (\dofmap_h^i p, \discrete{d}_h^{i-1} \discrete{v}_h)_{i,h}
  - (\dofmap_h^{i+1}d^i p, \discrete{d}_h^{i} \discrete{q}_h)_{i+1,h}
  \\
  &=
  \widetilde{\mathcal{E}}_h^i(d^i p; \discrete{q}_h)
  - \widetilde{\mathcal{E}}_h^{i-1}(p; \discrete{v}_h),
\end{aligned}
\]
where, in the second equality, we have used \eqref{eq:complex:cochain.map} again and the relation $u=\delta^i p$ (see \eqref{eq:complex:hodge.u.deltap}) and, in the last equality,
for any index $j$ and any $\omega \in V_I^{j+1}$ such that $\delta^{j+1} \omega \in V_I^j$, we have set
\[
\widetilde{\mathcal{E}}_h^j(\omega; \discrete{w}_h)
\coloneqq (\dofmap_h^j \delta^{j+1} \omega, \discrete{w}_h)_{j,h}
- (\dofmap_h^{j+1} \omega, \discrete{d}_h^j \discrete{w}_h)_{j+1,h}
\qquad \forall \discrete{w}_h \in \discrete{V}_h^j.
\]
The above quantity measures the failure to fulfill, at the discrete level, the relation expressing the adjointess of $\delta^{j+1}$ to $d^j$, and is therefore usually called \emph{adjoint consistency error} in the framework of DDR methods.
Estimates for the adjoint consistency errors associated with the DDR discretization of the three-dimensional de Rham complex were first proved in \cite{Di-Pietro.Droniou:23} based on direct manipulations for the gradient and the divergence and on the construction of a conforming lifting for the curl.
A unified estimate valid for any space dimension and form degree has been recently proved in \cite{Di-Pietro.Droniou.ea:25} based on a conforming lifting in the spirit of Section \ref{sec:abstract.framework:fully.discrete}.
Contrary to the original lifting of \cite{Di-Pietro.Droniou:23}, which was sought as the solution of a continuous PDE problem, the lifting of \cite{Di-Pietro.Droniou.ea:25} is obtained by direct prescription of the DOFs in a finite element space of appropriate degree built on a conforming simplicial submesh.

\begin{remark}[Link with Finite Element Systems]
  Recently, an interpretation of DDR methods in terms of Finite Element Systems \cite{Christiansen:08,Christiansen.Munthe-Kaas.ea:11} has been discussed in \cite{Christiansen.Rapetti:25}.
  The key idea of this work consists in defining harmonic extensions (a notion essentially akin to that of virtual functions) in such a way that, provided a norm equivalence of the form \eqref{eq:E} holds (cf. \cite[Eq. (4.11)]{Christiansen.Rapetti:25}), consistency of the discrete $L^2$-product can be proved in a spirit similar to Section \ref{sec:abstract.framework:vem}.
  This norm equivalence is however not proved, and the discussion in Section \ref{sec:nodal.virtual.analysis} shows that, even for the simplest gradient operator, it can be quite challenging (it is even more for the other operators in the de Rham complex).
\end{remark}


\section*{Acknowledgements}

Funded by the European Union (ERC Synergy, NEMESIS, project number 101115663).
Views and opinions expressed are however those of the authors only and do not necessarily reflect those of the European Union or the European Research Council Executive Agency. Neither the European Union nor the granting authority can be held responsible for them.


\printbibliography

@Article{         Achdou.Bernardi.ea:03,
  author        = {Achdou, Y. and Bernardi, C. and Coquel, F.},
  title         = {A priori and a posteriori analysis of finite volume
                  discretizations of {D}arcy's equations},
  journal       = {Numer. Math.},
  volume        = {96},
  year          = {2003},
  number        = {1},
  pages         = {17--42},
  doi           = {10.1007/s00211-002-0436-7}
}

@Article{         Ahmad.Alsaedi.ea:13,
  title         = {Equivalent projectors for virtual element methods},
  author        = {Ahmad, Bashir and Alsaedi, Ahmed and Brezzi, Franco and
                  Marini, L Donatella and Russo, Alessandro},
  journal       = {Comput. Math. Appl.},
  volume        = {66},
  number        = {3},
  pages         = {376--391},
  year          = {2013},
  publisher     = {Elsevier}
}

@Article{         Arnold.Falk.ea:06,
  title         = {Finite element exterior calculus, homological techniques,
                  and applications},
  volume        = {15},
  doi           = {10.1017/S0962492906210018},
  journal       = {Acta Numerica},
  author        = {Arnold, Douglas N. and Falk, Richard S. and Winther,
                  Ragnar},
  year          = {2006},
  pages         = {1–155}
}

@Article{         Beirao-da-Veiga.Brezzi.ea:13,
  title         = {Basic principles of virtual element methods},
  author        = {{Beir\~ao da Veiga}, Lourenco and Brezzi, Franco and
                  Cangiani, Andrea and Manzini, Gianmarco and Marini, L
                  Donatella and Russo, Alessandro},
  journal       = {Math. Models Methods Appl. Sci.},
  volume        = {23},
  number        = {01},
  pages         = {199--214},
  year          = {2013},
  publisher     = {World Scientific}
}

@Article{         Beirao-da-Veiga.Brezzi.ea:14,
  title         = {The hitchhiker's guide to the virtual element method},
  author        = {{Beir{\~a}o da Veiga}, L. and Brezzi, F. and Marini, L.D.
                  and Russo, A.},
  journal       = {Math. Models Methods Appl. Sci.},
  volume        = {24},
  number        = {08},
  pages         = {1541--1573},
  year          = {2014},
  doi           = {10.1142/S021820251440003X}
}

@Article{         Beirao-da-Veiga.Brezzi.ea:16,
  title         = {Virtual element method for general second-order elliptic
                  problems on polygonal meshes},
  author        = {{Beir\~ao da Veiga}, Lourenco and Brezzi, Franco and
                  Marini, Luisa Donatella and Russo, Alessandro},
  journal       = {Math. Models Methods Appl. Sci.},
  volume        = {26},
  number        = {04},
  pages         = {729--750},
  year          = {2016},
  publisher     = {World Scientific}
}

@Article{         Beirao-da-Veiga.Brezzi.ea:23,
  title         = {The virtual element method},
  author        = {{Beir\~ao da Veiga}, Louren\c{c}o and Brezzi, Franco and
                  Marini, L Donatella and Russo, Alessandro},
  journal       = {Acta Numerica},
  volume        = {32},
  pages         = {123--202},
  year          = {2023},
  doi           = {10.1017/S0962492922000095}
}

@Article{         Beirao-da-Veiga.Dassi.ea:22,
  title         = {Arbitrary-order pressure-robust {DDR} and {VEM} methods
                  for the {Stokes} problem on polyhedral meshes},
  author        = {{Beir\~ao da Veiga}, L. and Dassi, F. and Di Pietro, D. A.
                  and Droniou, J.},
  year          = {2022},
  journal       = {Comput. Meth. Appl. Mech. Engrg.},
  volume        = {397},
  number        = {115061},
  doi           = {10.1016/j.cma.2022.115061}
}

@Article{         Beirao-da-Veiga.Lipnikov.ea:11,
  title         = {Arbitrary-order nodal mimetic discretizations of elliptic
                  problems on polygonal meshes},
  author        = {{Beir\~ao da Veiga}, Lourenco and Lipnikov, Konstantin and
                  Manzini, Gianmarco},
  journal       = {SIAM J. Numer. Anal.},
  volume        = {49},
  number        = {5},
  pages         = {1737--1760},
  year          = {2011},
  publisher     = {SIAM}
}

@Book{            Beirao-da-Veiga.Lipnikov.ea:14,
  author        = {{Beir\~ao da Veiga}, L. and Lipnikov, K. and Manzini, G.},
  title         = {The mimetic finite difference method for elliptic
                  problems},
  series        = {MS\&A. Modeling, Simulation and Applications},
  volume        = {11},
  publisher     = {Springer, Cham},
  year          = {2014},
  doi           = {10.1007/978-3-319-02663-3}
}

@Article{         Beirao-da-Veiga.Lovadina.ea:17,
  title         = {Stability analysis for the virtual element method},
  author        = {{Beir\~ao da Veiga}, Louren\c{c}o and Lovadina, Carlo and
                  Russo, Alessandro},
  journal       = {Math. Models Methods Appl. Sci.},
  volume        = {27},
  number        = {13},
  pages         = {2557--2594},
  year          = {2017},
  doi           = {10.1142/S021820251750052X}
}

@Article{         Beirao-da-Veiga.Mascotto.ea:22,
  title         = {Interpolation and stability estimates for edge and face
                  virtual elements of general order},
  author        = {{Beir\~ao da Veiga}, Louren\c{c}o and Mascotto, Lorenzo
                  and Meng, Jian},
  journal       = {Math. Models Methods Appl. Sci.},
  volume        = {32},
  number        = {08},
  pages         = {1589--1631},
  year          = {2022},
  publisher     = {World Scientific}
}

@Article{         Beirao-da-Veiga.Vacca:22,
  title         = {Sharper error estimates for virtual elements and a
                  bubble-enriched version},
  author        = {{Beir\~ao da Veiga}, L and Vacca, Giuseppe},
  journal       = {SIAM J. Numer. Anal.},
  volume        = {60},
  number        = {4},
  pages         = {1853--1878},
  year          = {2022},
  doi           = {10.1137/21M1411275}
}

@Article{         Boffi.Di-Pietro:18,
  author        = {Boffi, D. and Di Pietro, D. A.},
  title         = {Unified formulation and analysis of mixed and primal
                  discontinuous skeletal methods on polytopal meshes},
  journal       = {ESAIM: Math. Model. Numer. Anal.},
  year          = {2018},
  volume        = {52},
  number        = {1},
  pages         = {1--28},
  doi           = {10.1051/m2an/2017036}
}

@Article{         Bonaldi.Di-Pietro.ea:25,
  title         = {An exterior calculus framework for polytopal methods},
  author        = {Bonaldi, F. and Di Pietro, D. A. and Droniou, J. and Hu,
                  K.},
  journal       = {J. Eur. Math. Soc.},
  year          = {2025},
  doi           = {10.4171/JEMS/1602},
  note          = {Published online}
}

@Article{         Bonelle.Ern:14,
  title         = {Analysis of compatible discrete operator schemes for
                  elliptic problems on polyhedral meshes},
  author        = {Bonelle, J. and Ern, A.},
  journal       = {ESAIM: Math. Model. Numer. Anal.},
  volume        = {48},
  pages         = {553--581},
  year          = {2014},
  doi           = {10.1051/m2an/2013104}
}

@Article{         Botti.Di-Pietro.ea:19,
  author        = {Botti, M. and Di Pietro, D. A. and Guglielmana, A.},
  title         = {A low-order nonconforming method for linear elasticity on
                  general meshes},
  journal       = {Comput. Meth. Appl. Mech. Engrg.},
  year          = {2019},
  volume        = {354},
  pages         = {96--118},
  doi           = {10.1016/j.cma.2019.05.031}
}

@Article{         Brenner.Guan.ea:17,
  author        = {Brenner, S. C. and Guan, Q. and Sung, L.-Y.},
  title         = {Some estimates for {Virtual Element} methods},
  journal       = {Comput. Methods Appl. Math.},
  year          = {2017},
  volume        = {17},
  number        = {4},
  pages         = {553–-574}
}

@Article{         Brenner.Sung.ea:13,
  title         = {A {Morley} finite element method for the displacement
                  obstacle problem of clamped Kirchhoff plates},
  journal       = {Journal of Computational and Applied Mathematics},
  volume        = {254},
  pages         = {31-42},
  year          = {2013},
  doi           = {10.1016/j.cam.2013.02.028},
  author        = {Brenner, S. C. and Sung, Li-Yeng and Zhang, Hongchao and
                  Zhang, Yi}
}

@Article{         Brenner.Sung:18,
  title         = {Virtual element methods on meshes with small edges or
                  faces},
  author        = {Brenner, Susanne C and Sung, Li-Yeng},
  journal       = {Math. Models Methods Appl. Sci.},
  volume        = {28},
  number        = {07},
  pages         = {1291--1336},
  year          = {2018},
  publisher     = {World Scientific}
}

@Article{         Brenner:99,
  author        = {Brenner, Susanne C.},
  title         = {Convergence of nonconforming multigrid methods without
                  full elliptic regularity},
  journal       = {Math. Comp.},
  volume        = {68},
  year          = {1999},
  number        = {225},
  pages         = {25--53},
  doi           = {10.1090/S0025-5718-99-01035-2}
}

@Article{         Brezzi.Buffa.ea:09,
  title         = {Mimetic finite differences for elliptic problems},
  author        = {Brezzi, Franco and Buffa, Annalisa and Lipnikov,
                  Konstantin},
  journal       = {ESAIM: Mathematical Modelling and Numerical Analysis},
  volume        = {43},
  number        = {2},
  pages         = {277--295},
  year          = {2009},
  publisher     = {EDP Sciences}
}

@Article{         Brezzi.Lipnikov.ea:05,
  title         = {Convergence of the mimetic finite difference method for
                  diffusion problems on polyhedral meshes},
  author        = {Brezzi, Franco and Lipnikov, Konstantin and Shashkov,
                  Mikhail},
  journal       = {SIAM Journal on Numerical Analysis},
  volume        = {43},
  number        = {5},
  pages         = {1872--1896},
  year          = {2005},
  publisher     = {SIAM}
}

@Article{         Cangiani.Manzini.ea:17,
  title         = {Conforming and nonconforming virtual element methods for
                  elliptic problems},
  author        = {Cangiani, Andrea and Manzini, Gianmarco and Sutton, Oliver
                  J},
  journal       = {IMA J. Numer. Anal.},
  volume        = {37},
  number        = {3},
  pages         = {1317--1354},
  year          = {2017},
  publisher     = {Oxford University Press}
}

@Article{         Carstensen.Puttkammer:20,
  author        = {Carstensen, Carsten and Puttkammer, Sophie},
  title         = {How to prove the discrete reliability for nonconforming
                  finite element methods},
  journal       = {J. Comput. Math.},
  volume        = {38},
  year          = {2020},
  number        = {1},
  pages         = {142--175},
  doi           = {10.4208/jcm.1908-m2018-0174}
}

@Article{         Cea:64,
  author        = {C\'ea, Jean},
  title         = {Approximation variationnelle des probl\`emes aux limites},
  journal       = {Ann. Inst. Fourier (Grenoble)},
  volume        = {14},
  year          = {1964},
  pages         = {345--444},
  url           = {http://www.numdam.org/item?id=AIF_1964__14_2_345_0}
}

@Article{         Chen.Huang:18,
  title         = {Some error analysis on virtual element methods},
  author        = {Chen, Long and Huang, Jianguo},
  journal       = {Calcolo},
  volume        = {55},
  number        = {1},
  pages         = {5},
  year          = {2018},
  publisher     = {Springer}
}

@Article{         Christiansen.Munthe-Kaas.ea:11,
  author        = {Christiansen, Snorre H. and Munthe-Kaas, Hans Z. and
                  Owren, Brynjulf},
  title         = {Topics in structure-preserving discretization},
  journal       = {Acta Numerica},
  volume        = {20},
  year          = {2011},
  pages         = {1--119},
  doi           = {10.1017/S096249291100002X}
}

@Misc{            Christiansen.Rapetti:25,
  author        = {Christiansen, S. H. and Rapetti, F.},
  title         = {Interpretation of a {Discrete de Rham} method as a {Finite
                  Element System}},
  year          = {2025},
  month         = {12},
  eprint        = {2512.05912},
  archiveprefix = {arXiv},
  primaryclass  = {math.NA}
}

@Article{         Christiansen:08,
  author        = {Christiansen, Snorre H.},
  title         = {A construction of spaces of compatible differential forms
                  on cellular complexes},
  journal       = {Math. Models Methods Appl. Sci.},
  volume        = {18},
  year          = {2008},
  number        = {5},
  pages         = {739--757},
  doi           = {10.1142/S021820250800284X}
}

@Article{         Di-Pietro.Droniou.ea:20,
  author        = {Di Pietro, D. A. and Droniou, J. and Rapetti, F.},
  title         = {Fully discrete polynomial {de Rham} sequences of arbitrary
                  degree on polygons and polyhedra},
  journal       = {Math. Models Methods Appl. Sci.},
  year          = {2020},
  volume        = {30},
  number        = {9},
  pages         = {1809-1855},
  doi           = {10.1142/S0218202520500372}
}

@Article{         Di-Pietro.Droniou.ea:24,
  title         = {A pressure-robust {Discrete de Rham} scheme for the
                  {Navier--Stokes} equations},
  author        = {Di Pietro, D. A. and Droniou, J. and Qian, J. J.},
  journal       = {Comput. Meth. Appl. Mech. Engrg.},
  year          = {2024},
  volume        = {421},
  number        = {116765},
  doi           = {10.1016/j.cma.2024.116765}
}

@Misc{            Di-Pietro.Droniou.ea:25,
  author        = {Di Pietro, D. A. and Droniou, J. and Pitassi, S.},
  title         = {Conforming lifting and adjoint consistency for the
                  {Discrete de Rham} complex of differential forms},
  year          = {2025},
  month         = {9},
  eprint        = {2509.21449},
  archiveprefix = {arXiv},
  primaryclass  = {math.NA}
}

@Article{         Di-Pietro.Droniou:18,
  author        = {Di Pietro, D. A. and Droniou, J.},
  title         = {A third {Strang} lemma for schemes in fully discrete
                  formulation},
  year          = {2018},
  journal       = {Calcolo},
  volume        = {55},
  number        = {40},
  doi           = {10.1007/s10092-018-0282-3}
}

@Book{            Di-Pietro.Droniou:20,
  author        = {Di Pietro, D. A. and Droniou, J.},
  title         = {The {Hybrid High-Order} method for polytopal meshes},
  subtitle      = {Design, analysis, and applications},
  publisher     = {Springer International Publishing},
  year          = {2020},
  series        = {Modeling, Simulation and Application},
  number        = {19},
  doi           = {10.1007/978-3-030-37203-3}
}

@Article{         Di-Pietro.Droniou:22,
  author        = {Di Pietro, Daniele A. and Droniou, J\'{e}r\^{o}me},
  title         = {A discrete de {R}ham method for the {R}eissner-{M}indlin
                  plate bending problem on polygonal meshes},
  journal       = {Comput. Math. Appl.},
  volume        = {125},
  year          = {2022},
  pages         = {136--149},
  doi           = {10.1016/j.camwa.2022.08.041}
}

@Article{         Di-Pietro.Droniou:23,
  author        = {Di Pietro, D. A. and Droniou, J.},
  title         = {An arbitrary-order discrete {de Rham} complex on
                  polyhedral meshes: Exactness, {Poincar\'e} inequalities,
                  and consistency},
  journal       = {Found. Comput. Math.},
  year          = {2023},
  volume        = {23},
  pages         = {85--164},
  doi           = {10.1007/s10208-021-09542-8}
}

@Article{         Di-Pietro.Ern.ea:14,
  author        = {Di Pietro, D. A. and Ern, A. and Lemaire, S.},
  title         = {An arbitrary-order and compact-stencil discretization of
                  diffusion on general meshes based on local reconstruction
                  operators},
  journal       = {Comput. Meth. Appl. Math.},
  year          = {2014},
  volume        = {14},
  number        = {4},
  pages         = {461--472},
  doi           = {10.1515/cmam-2014-0018}
}

@Article{         Di-Pietro.Ern:15,
  author        = {Di Pietro, D. A. and Ern, A.},
  title         = {A hybrid high-order locking-free method for linear
                  elasticity on general meshes},
  journal       = {Comput. Meth. Appl. Mech. Engrg.},
  year          = {2015},
  volume        = {283},
  pages         = {1--21},
  doi           = {10.1016/j.cma.2014.09.009}
}

@Article{         Droniou.Eymard.ea:10,
  author        = {Droniou, J\'er\^ome and Eymard, Robert and Gallou\"et,
                  Thierry and Herbin, Rapha\`ele},
  title         = {A unified approach to mimetic finite difference, hybrid
                  finite volume and mixed finite volume methods},
  journal       = {Math. Models Methods Appl. Sci.},
  volume        = {20},
  year          = {2010},
  number        = {2},
  pages         = {265--295},
  doi           = {10.1142/S0218202510004222}
}

@Article{         Droniou.Eymard.ea:16,
  author        = {Droniou, J{\'e}r{\^o}me and Eymard, Robert and Herbin,
                  Rapha\`ele},
  title         = {Gradient schemes: generic tools for the numerical analysis
                  of diffusion equations},
  journal       = {M2AN Math. Model. Numer. Anal.},
  volume        = {50},
  year          = {2016},
  number        = {3},
  pages         = {749--781},
  note          = {Special issue -- Polyhedral discretization for PDE},
  doi           = {10.1051/m2an/2015079}
}

@Book{            Droniou.Eymard.ea:18,
  title         = {The gradient discretisation method},
  author        = {Droniou, J\'er\^ome and Eymard, Robert and Gallou\"et,
                  Thierry and Guichard, Cindy and Herbin, Rapha\`ele},
  year          = {2018},
  pages         = {511p},
  series        = {Mathematics \& Applications},
  publisher     = {Springer},
  volume        = {82},
  isbn          = {978-3-319-79041-1 (Softcover) 978-3-319-79042-8 (eBook)},
  doi           = {10.1007/978-3-319-79042-8}
}

@Article{         Droniou.Eymard:06,
  author        = {Droniou, J\'er\^ome and Eymard, Robert},
  title         = {A mixed finite volume scheme for anisotropic diffusion
                  problems on any grid},
  journal       = {Numer. Math.},
  volume        = {105},
  year          = {2006},
  number        = {1},
  pages         = {35--71},
  doi           = {10.1007/s00211-006-0034-1}
}

@Article{         Droniou.Haidar.ea:25,
  author        = {Droniou, J\'er\^ome and Haidar, Ali and Masson, Roland},
  title         = {Analysis of a VEM--fully discrete polytopal scheme with
                  bubble stabilisation for contact mechanics with Tresca
                  friction},
  journal       = {ESAIM: M2AN Math. Model. Numer. Anal.},
  year          = {2025},
  volume        = {59},
  number        = {2},
  pages         = {1043-1074},
  doi           = {10.1051/m2an/2025013}
}

@Article{         Ern.Guermond:16,
  author        = {Ern, A. and Guermond, J.-L.},
  title         = {Mollification in strongly {L}ipschitz domains with
                  application to continuous and discrete de {R}ham
                  complexes},
  journal       = {Comput. Methods Appl. Math.},
  volume        = {16},
  year          = {2016},
  number        = {1},
  pages         = {51--75},
  doi           = {10.1515/cmam-2015-0034}
}

@Book{            Ern.Guermond:21*1,
  author        = {Ern, Alexandre and Guermond, Jean-Luc},
  title         = {Finite elements {II}},
  subtitle      = {Approximation and interpolation},
  series        = {Texts in Applied Mathematics},
  volume        = {73},
  publisher     = {Springer, Cham},
  year          = {2021},
  doi           = {10.1007/978-3-030-56923-5}
}

@Article{         Ern.Vohralik:15,
  author        = {Ern, A. and Vohral\'{i}k, M.},
  title         = {Polynomial-degree-robust a posteriori estimates in a
                  unified setting for conforming, nonconforming,
                  discontinuous {Galerkin}, and mixed discretizations},
  journal       = {SIAM J. Numer. Anal.},
  year          = {2015},
  volume        = {53},
  number        = {2},
  pages         = {1058--1081},
  doi           = {10.1137/130950100}
}

@Article{         Ern.Vohralik:20,
  author        = {Ern, A. and Vohral\'{i}k, M.},
  title         = {Stable broken $H^1$ and
                  $\boldsymbol{H}(\operatorname{div})$ polynomial extensions
                  for polynomial-degree-robust potential and flux
                  reconstruction in three space dimensions},
  journal       = {Math. Comp.},
  year          = {2020},
  volume        = {89},
  number        = {322},
  pages         = {551--594},
  doi           = {10.1090/mcom/3482}
}

@InCollection{    Eymard.Gallouet.ea:00,
  author        = {Eymard, R. and Gallou{\"e}t, T. and Herbin, R.},
  title         = {Finite volume methods},
  booktitle     = {Techniques of Scientific Computing, Part III},
  series        = {Handbook of Numerical Analysis, VII},
  pages         = {713--1020},
  publisher     = {North-Holland},
  editor        = {Ciarlet, P. G. and Lions, J.-L.},
  address       = {Amsterdam},
  year          = {2000}
}

@Article{         Karakashian.Pascal:03,
  author        = {Karakashian, Ohannes A. and Pascal, Frederic},
  title         = {A posteriori error estimates for a discontinuous
                  {G}alerkin approximation of second-order elliptic
                  problems},
  journal       = {SIAM J. Numer. Anal.},
  volume        = {41},
  year          = {2003},
  number        = {6},
  pages         = {2374--2399},
  doi           = {10.1137/S0036142902405217}
}

@Article{         Kuznetsov.Lipnikov.ea:04,
  title         = {The mimetic finite difference method on polygonal meshes
                  for diffusion-type problems},
  author        = {Kuznetsov, Yuri and Lipnikov, Konstantin and Shashkov,
                  Mikhail},
  journal       = {Comput. Geosci.},
  volume        = {8},
  number        = {4},
  pages         = {301--324},
  year          = {2004},
  publisher     = {Springer}
}

@Article{         Linke:14,
  author        = {Linke, Alexander},
  title         = {On the role of the {H}elmholtz decomposition in mixed
                  methods for incompressible flows and a new variational
                  crime},
  journal       = {Comput. Methods Appl. Mech. Engrg.},
  volume        = {268},
  year          = {2014},
  pages         = {782--800},
  doi           = {10.1016/j.cma.2013.10.011}
}

@Article{         Mallik.Nataraj:16,
  author        = {Mallik, Gouranga and Nataraj, Neela},
  title         = {A nonconforming finite element approximation for the von
                  {K}arman equations},
  journal       = {ESAIM Math. Model. Numer. Anal.},
  volume        = {50},
  year          = {2016},
  number        = {2},
  pages         = {433--454},
  doi           = {10.1051/m2an/2015052}
}

@Article{         Mascotto:23,
  title         = {The role of stabilization in the virtual element method: a
                  survey},
  author        = {Mascotto, Lorenzo},
  journal       = {Comput. Math. Appl.},
  volume        = {151},
  pages         = {244--251},
  year          = {2023},
  publisher     = {Elsevier}
}

@Article{         Mora.Rivera.ea:15,
  title         = {A virtual element method for the Steklov eigenvalue
                  problem},
  author        = {Mora, David and Rivera, Gonzalo and Rodr{\'\i}guez,
                  Rodolfo},
  journal       = {Math. Models Methods Appl. Sci.},
  volume        = {25},
  number        = {08},
  pages         = {1421--1445},
  year          = {2015},
  publisher     = {World Scientific}
}

@InBook{          Strang:72,
  author        = {Strang, G.},
  title         = {The Mathematical Foundations of the Finite Element Method
                  with Applications to Partial Diﬀerential Equations},
  chapter       = {Variational crimes in the finite element method},
  editor        = {A. Aziz},
  publisher     = {Academic Press},
  address       = {New York, NY},
  year          = {1972}
}

@Misc{            Vohralik:24,
  title         = {A posteriori numerical analysis based on the method of
                  equilibrated fluxes},
  author        = {Vohral\'{i}k, M.},
  month         = {4},
  year          = {2024},
  url           = {https://who.rocq.inria.fr/Martin.Vohralik/Enseig/APost/a_posteriori.pdf},
  note          = {Lecture notes for the course NMNV464, Charles University,
                  Prague}
}

@Book{            Wriggers.Aldakheel.ea:24,
  title         = {Virtual element methods in engineering sciences},
  author        = {Wriggers, Peter and Aldakheel, Fadi and Hudobivnik,
                  Bla{\v{z}}},
  year          = {2024},
  publisher     = {Springer}
}

\end{document}